\def\QQ{{\mathbb Q}}
\def\frk{\mathfrak}
\def\mm{{\frk m}}
\def\Phi{{\frk N}}
\def\opn#1#2{\def#1{\operatorname{#2}}} 
\def\opn#1#2{\def#1{\operatorname{#2}}} 
\opn\chara{char} 
\opn\length{\ell}
\opn\pd{pd} 
\opn\rk{rk}
\opn\projdim{proj\,dim}
\opn\injdim{inj\,dim}
\opn\rank{rank}
\opn\depth{depth}
\opn\grade{grade}
\opn\hei{ht}
\opn\embdim{emb\,dim}
\opn\codim{codim}
\opn\Tr{Tr} 
\opn\bigrank{big\,rank}
\opn\superheight{superheight}
\opn\lcm{lcm}
\opn\trdeg{tr\,deg}
\opn\reg{reg} 
\opn\lreg{lreg} 
\opn\ini{in} 
\opn\lpd{lpd}
\opn\size{size}
\opn{\mult}{mult}
\opn\div{div} \opn\Div{Div} \opn\cl{cl} \opn\Cl{Cl}
\opn\Spec{Spec} \opn\Supp{Supp} \opn\supp{supp} 
\opn\Sing{Sing} \opn\Ass{Ass} \opn\Min{Min}
\opn\Ann{Ann} \opn\Rad{Rad} \opn\Soc{Soc}
\opn\Syz{Syz} \opn\Im{Im} \opn\Ker{Ker} \opn\Coker{Coker}
\opn\Am{Am} \opn\Hom{Hom} \opn\Tor{Tor} \opn\Ext{Ext}
\opn\End{End} \opn\Aut{Aut} \opn\id{id}
\opn\nat{nat}
\opn\pff{pf} 
\opn\Pf{Pf} \opn\GL{GL} \opn\SL{SL} \opn\mod{mod} \opn\ord{ord}
\opn\Gin{Gin}
\opn\Hilb{Hilb}
\opn\adeg{adeg}
\opn\std{std}\opn\ip{infpt}
\opn\Pol{Pol}
\opn\sat{sat}
\opn\Var{Var}
\opn\aff{aff} \opn\con{conv} \opn\relint{relint} \opn\st{st}
\opn\lk{lk} \opn\cn{cn} \opn\core{core} \opn\vol{vol}
\opn\link{link} \opn\star{star}
\opn\gr{gr}
\def\Rees{{\mathcal R}}
\def\pot#1#2{#1[\kern-0.28ex[#2]\kern-0.28ex]}
\opn\dirlim{\underrightarrow{\lim}}
\opn\inivlim{\underleftarrow{\lim}}
\newtheorem{Theorem}{Theorem}[section]
\newtheorem{Lemma}[Theorem]{Lemma}
\newtheorem{Corollary}[Theorem]{Corollary}
\newtheorem{Proposition}[Theorem]{Proposition}
\newtheorem{Example}[Theorem]{Example}
\newtheorem{Definition}[Theorem]{Definition}
\let\epsilon\varepsilon
\let\phi=\varphi
\let\kappa=\varkappa
\let\lm=\lambda
\opn\dis{dis}
\opn\Lex{Lex}
\begin{document}
\title[Hilbert Coefficients and depth of $G(I)$]
{Hilbert coefficients and depth of the associated graded ring of an ideal}
\author{J. K. Verma}
\address{J. K. Verma, Department of Mathematics, Indian Institute
of Technology Bombay, Mumbai, India}
\email{jkv@math.iitb.ac.in}

\begin{abstract} In this expository paper we survey results proved during 
the last fifty years  that relate
Hilbert coefficients $e_0(I)$ and $e_1(I)$  
of an $\mm $-primary  ideal $I$ in a Cohen-Macaulay
local ring $(R,\mm )$  with depth of the associated graded ring $G(I).$
Several results in this area follow from two theorems   of S. Huckaba 
and  T. Marley. These were proved using homological techniques. We 
provide simple proofs using  superficial sequences.
\end{abstract}
\thanks{\noindent 2000 AMS Subject Classification: Primary 13H15 13H15\\
{\em Key words and phrases:} Associated graded ring, Hilbert function, 
Cohen-Macaulay ring, depth of associated graded ring, reduction of ideals, superficial elements.}
\maketitle
\thispagestyle{empty}

\section{\bf Introduction} 
Throughout these notes,  $(R,\mm)$ denotes  a Noetherian local ring of 
dimension $d$ and $I$ denotes an $\mm $-primary ideal of $R.$ 
Let $\lm(M)$  denote length of an $R$-module $M.$ 
The Hilbert function 
$H_I(n)$ of $I$ is defined as
$H_I(n)=\lm(R/I^n).$ It is well-known that $H_I(n)$ is a  polynomial function of $n$
of degree $d.$  In other words, 
there is a polynomial  $P_I(x) \in   \QQ[x]$ such that $H_I(n)=P_I(n)$ for 
all large $n.$  It is written in terms  of the binomial coefficients as:
$$ P_I(x)=e_0(I)\binom{x+d-1}{d}-e_1(I)\binom{x+d-2}{d-1}+
\cdots+(-1)^de_d(I)$$  
where $e_i(I)$ for $i=0,1, \ldots,d$ are integers, 
called the Hilbert coefficients of $I.$ The leading coefficient $e_0(I),$ 
called the multiplicity of $I,$ is well-understood. However, not much is 
known about  other coefficients.

The associated graded ring of $I$ is defined to be the graded ring 
$G(I)=\bigoplus_{n=0}^{\infty} I^n/I^{n+1}.$ The objective of this  paper 
is to survey known results that link depth of $G(I)$ with linear relations
among the Hilbert coefficients $e_0(I)$ and $e_1(I)$ of $I.$ We have  chosen  two 
theorems of Huckaba and Marley to 
illustrate the techniques and results in this area. Moreover  these theorems  
quickly yield, as special cases, several results proved over a period of 
fifty  years.     We will provide  simple proofs of these results and their 
consequences.     

In section two, we  will survey the main results proved  about the relationship of depth of $G(I)$ and Hilbert coefficients. In section three, we will provide a 
quick introduction to the theory of reductions of ideals. In section four, we prove the main facts for Hilbert polynomial in a one dimensional Cohen-Macaulay local ring. In section five, a detailed treatment of the theory of superficial elements and sequences 
is given. This theory is not available in the modern text books in commutative algebra. We have gathered the results useful for our purposes from the Chicago
Notes of M. P. Murthy \cite{m}, the recent book of C. Huneke and I. Swanson 
\cite{hs}  and several papers in this area. 
In section six, we  provide a new and very simple proof, using reductions
and superficial elements, of two theorems of Huckaba \cite{huc} and Marley 
\cite{hm}. We also include a few of their consequences.

\noindent
For undefined terms in this exposition we refer the reader to \cite{bh}.
\bigskip 

\noindent
{\bf Acknowledgements:} These notes are an expanded version of lectures delivered in October 2006 at University of Essen (Germany) while the author was working on a research project  funded by  {\em Deutsche Forschungsgemeinschaft} (DFG). He thanks J\"urgen Herzog for the invitation to visit Essen.  The manuscript was written at IIT Madras in July 2007 when the author was invited to deliver these lectures in the {\em Workshop in Commutative Algebra and Algebraic Geometry} funded by the {\em National Board for Higher Mathematics}. The author thanks A. V. Jayanthan and V. Uma for the invitation. Thanks are also due to S. A. Katre and Mousumi Mandal for their help. Finally I thank M. Rossi for several useful comments.

\section{\bf A brief survey}
In this section we  survey  results that link 
the depth of the associated graded ring $G(I)$ of  an $\mm$-primary ideal $I$ of a Cohen-Macaulay local ring $(R,\mm),$ and  linear relations among the Hilbert coefficients $e_0(I)$ and $e_1(I)$ of $I.$ We refer the reader to an excellent survey article by G. Valla \cite{v}  on Hilbert functions of graded algebras and in particular of a Cohen-Macaulay local ring. Perhaps 
the earliest known result in this direction is due to Northcott \cite{no}.

\begin{Theorem} [Northcott, 1960] Let $I$ be an $\mm$-primary ideal of a 
Cohen-Macaulay local ring $(R,\mm)$ with $R/ \mm $ infinite. Then \\
{\rm (a)}~ $e_0(I)-e_1(I) \leq \lm(R/I).$ \\
{\rm (b)}~  $e_1(I) \geq 0$ and equality holds   
if and only if $I$ is generated by $d$ elements. 
In this case $e_i(I)=0$ for $i=1, 2, \ldots,d$ and $G(I)$ 
is isomorphic to a polynomial ring  in $d$ indeterminates over $R/I.$   

\end{Theorem}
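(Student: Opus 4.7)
The plan is to use the theory of superficial elements (which Section 5 of the paper develops) to reduce to the one-dimensional case, and then exploit the explicit description of the Hilbert function in dimension one via a minimal reduction.

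\medskip

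\noindent\textbf{Reduction to dimension one.} Since $R/\mm$ is infinite, I can choose a superficial sequence $x_1,\ldots,x_{d-1}$ for $I$. Because $R$ is Cohen--Macaulay and each $x_i$ is part of a system of parameters, this is a regular sequence. Passing to $\overline R = R/(x_1,\ldots,x_{d-1})$ and $\overline I = I\overline R$, the standard behaviour of superficial sequences (treated in Section~5) gives $e_0(\overline I)=e_0(I)$ and $e_1(\overline I)=e_1(I)$. Moreover, since each $x_i$ lies in $I$, we have $I+(x_1,\ldots,x_{d-1})=I$, hence $\lm(\overline R/\overline I)=\lm(R/I)$. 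Finally, $\overline R$ is a $1$-dimensional Cohen--Macaulay local ring, so it suffices to prove (a) and the inequality in (b) when $d=1$.

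\medskip

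\noindent\textbf{The one-dimensional computation.} Assume $d=1$ and pick a minimal reduction $J=(x)$ of $I$; then $x$ is a nonzerodivisor and $I^{n+1}=xI^n$ for all $n\ge r$, where $r$ is the reduction number. Multiplication by $x$ gives an isomorphism $R/I^n \iso xR/xI^n$, so
\[
\lm(R/xI^n)=\lm(R/xR)+\lm(R/I^n)=e_0(I)+\lm(R/I^n),
\]
using $e_0(I)=\lm(R/xR)$. Since $xI^n\subseteq I^{n+1}$ always, and equality holds for $n\ge r$, I obtain
\[
\lm(I^n/I^{n+1})\le e_0(I)\quad\text{for all }n\ge 0,\qquad\text{with equality for }n\ge r.
\]

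\medskip

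\noindent\textbf{Extracting the inequalities.} For large $n$, $\lm(R/I^n)=\sum_{k=0}^{n-1}\lm(I^k/I^{k+1})=n\,e_0(I)-e_1(I)$, so
\[
e_1(I)=\sum_{k=0}^{r-1}\bigl(e_0(I)-\lm(I^k/I^{k+1})\bigr),
\]
which is a finite sum of non-negative integers. This immediately gives $e_1(I)\ge 0$, proving the first assertion in (b). Isolating the $k=0$ term, which equals $e_0(I)-\lm(R/I)$, yields $e_1(I)\ge e_0(I)-\lm(R/I)$, which is (a).

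\medskip

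\noindent\textbf{The equality case.} If $e_1(I)=0$, then every summand vanishes, so in particular $\lm(R/I)=e_0(I)$ and $\lm(I^k/I^{k+1})=e_0(I)$ for all $k\ge 0$; combined with $xI^k\subseteq I^{k+1}$ and the length equality, this forces $I^{k+1}=xI^k$ for all $k\ge 0$, and the $k=0$ case gives $I=(x)$. Lifting back to the original ring, $\overline I=(\overline{x_d})$ implies $I=(x_1,\ldots,x_{d-1},x_d)$, so $I$ is generated by $d$ elements. Conversely, if $I$ is generated by $d$ elements, these form a system of parameters in a Cohen--Macaulay ring, hence a regular sequence, so $G(I)\iso (R/I)[T_1,\ldots,T_d]$ is a polynomial ring; a direct computation of the Hilbert series then gives $P_I(x)=\lm(R/I)\binom{x+d-1}{d}$, so $e_0(I)=\lm(R/I)$ and $e_i(I)=0$ for $i\ge 1$.

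\medskip

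\noindent The step I expect to be the most delicate is the reduction to dimension one: verifying that passing modulo a superficial regular sequence really preserves both $e_0$ and $e_1$ (not just $e_0$) requires invoking the results on superficial sequences from Section~5 carefully. Everything after that is a clean computation with a single minimal reduction.
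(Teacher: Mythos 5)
Your proof is correct and follows essentially the same route the paper itself sets up: the one-dimensional computation is the same telescoping identity $\lm(I^k/I^{k+1})=e_0(I)-\lm(I^{k+1}/xI^k)$ used in Section~4 (Theorems~4.1 and~4.4), and the passage to dimension one via a superficial (hence regular) sequence preserving $e_0$, $e_1$ and $\lm(R/I)$ is exactly the inductive mechanism developed in Section~5. The only point worth making explicit is that $e_1$ survives each reduction step because $\lm(0:a)=0$ in the Cohen--Macaulay case, which is precisely what the paper's Theorem on superficial elements and Hilbert polynomials provides.
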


As a consequence of Northcott's theorem, we observe that a Cohen-Macaulay
local ring $(R,\mm)$ is regular if and only if $e_1(\mm)=0.$ The following 
theorem of Nagata \cite[(40.6)]{na} shows that regularity 
of $(R,\mm)$ can also be characterized in terms  of $e_0(\mm).$ 
Recall that a local ring $(R,\mm)$ is called unmixed  if for each associated
prime $p$ of the $\mm$-adic completion $\widehat{R}$ satisfies 
$\dim \widehat{R}/p= \dim R.$
\begin{Theorem}[Nagata, 1956] Let $(R,\mm)$ be an unmixed local ring. Then 
$e_0(I)=1$ if and only if $I=\mm$ and $R$ is regular.
\end{Theorem}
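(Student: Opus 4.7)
The reverse implication is immediate: if $R$ is regular of dimension $d$ and $I=\mm$, then $G(\mm)$ is a polynomial ring in $d$ variables over $R/\mm$ (the special case recorded in Northcott's theorem above), so $\lm(R/\mm^n)=\binom{n+d-1}{d}$ and hence $e_0(\mm)=1$.

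For the forward implication, my plan is to reduce to the case where $R$ is a complete local domain, invoke the classical fact that such a domain with multiplicity one is regular, and finally squeeze $I$ up to $\mm$. Since $I\subseteq\mm$ gives $\lm(R/I^n)\ge\lm(R/\mm^n)$ and hence $e_0(\mm)\le e_0(I)=1$, positivity of multiplicity yields $e_0(\mm)=1$. Completion preserves unmixedness and multiplicities, and both regularity of $R$ and the equality $I=\mm$ can be tested after the faithfully flat extension $R\to\widehat R$, so I may assume $R=\widehat R$.

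Next I would apply the associativity formula for multiplicity,
$$e_0(\mm;R)\;=\;\sum_{\pp}\lm(R_\pp)\,e(\mm;R/\pp),$$
where $\pp$ ranges over the minimal primes of $R$ with $\dim R/\pp=d$; unmixedness guarantees $\Ass R=\Min R$ and that every minimal prime appears. Since each summand is a positive integer and the total equals $1$, there is a unique such $\pp$ with $\lm(R_\pp)=1$ (i.e.\ $\pp R_\pp=0$) and $e(\mm;R/\pp)=1$. Because $\Ass R=\{\pp\}$, any nonzero $y\in\pp$ would have $\Ann(y)\subseteq\pp$, yet $\pp R_\pp=0$ produces some $s\notin\pp$ with $sy=0$---a contradiction. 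Thus $\pp=0$ and $R$ is a domain.

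At this point $R$ is a complete local domain with $e_0(\mm)=1$, and the main obstacle is to show that such a ring is regular. I would handle this via Cohen's structure theorem, writing $R$ as a module-finite extension of a complete regular local subring $A=k[[x_1,\dots,x_d]]$, and using the multiplicity formula for a finite extension together with $e_0(\mm_A;A)=1$ to force $\rank_A R=1$, whence $R=A$ is regular; this step is exactly the content of \cite[(40.6)]{na}. Once regularity is in hand, $R$ is Cohen--Macaulay, so a minimal reduction $J=(x_1,\dots,x_d)$ of $I$ (after harmlessly enlarging the residue field) satisfies $\lm(R/J)=e_0(J)=e_0(I)=1$. Thus $J=\mm$, and the chain $J\subseteq I\subseteq\mm$ forces $I=\mm$.
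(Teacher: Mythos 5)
The paper does not actually prove this theorem: it is quoted in the survey section with only the citation \cite[(40.6)]{na}, so there is no internal argument to measure your proof against. Your reconstruction follows the classical route and is essentially sound. The easy steps are all correct: $e_0(\mm)\le e_0(I)=1$ forces $e_0(\mm)=1$; completion preserves unmixedness, multiplicity, regularity and the equality $I=\mm$; the associativity formula together with unmixedness (which gives $\Ass\widehat R=\Min\widehat R$ with every minimal prime $d$-dimensional) correctly pins down a single minimal prime $\pp$ with $\pp\widehat R_\pp=0$, and your annihilator argument then shows $\pp=0$, so $\widehat R$ is a complete local domain; and the final squeeze $J\subseteq I\subseteq\mm$ with $\lm(R/J)=e_0(J)=e_0(I)=1$ is fine. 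The one step you delegate back to Nagata is the crux --- a complete local domain of multiplicity one is regular --- which is legitimate here, since the paper itself cites \cite[(40.6)]{na} for the entire statement. If you wanted to make that step self-contained, three points need care: the regular subring $A$ should be generated by a system of parameters that is a \emph{reduction} of $\mm$ (so that $e(\mm_A R;R)=e(\mm;R)=1$; for an arbitrary system of parameters one only gets $e(\mm_A R;R)\ge e(\mm;R)$, which is the wrong inequality for forcing $\rank_A R=1$); the conclusion $R=A$ from $\rank_A R=1$ uses that $A$ is normal and $R$ is integral over $A$ inside the fraction field of $A$; and in mixed characteristic Cohen's theorem produces $A$ of the form $V[\kern-0.28ex[ x_1,\dots,x_{d-1}]\kern-0.28ex]$ over a complete discrete valuation ring $V$ rather than a power series ring over the residue field, so the bookkeeping is slightly different.
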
  

 Huneke \cite{hun}   and Ooishi \cite{o} found conditions under which the equality  $e_0(I)-e_1(I)=\lm(R/I)$ holds.

\begin{Theorem}[Huneke, Ooishi, 1987] Let $(R,\mm)$ be a Cohen-Macaulay local ring
with $R/\mm$ infinite. Then $e_0(I)-e_1(I)=\lm(R/I)$ if and only if for 
any minimal reduction $J$ of $I,$ $JI=I^2.$ Moreover, when this is the case,
$G(I)$ is Cohen-Macaulay, $e_i(I)=0$ for all $i=2,3, \ldots,d$ and for all 
$n\geq 0,$
$$ H_I(n)=P_I(n)=e_0(I)\binom{n+d-1}{d}-e_1(I)\binom{n+d-2}{d-1}.$$
\end{Theorem}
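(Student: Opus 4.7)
Plan: I reduce to dimension one via a superficial sequence, perform the key computation directly there, and lift via the Valabrega--Valla criterion. First, since $R/\mm$ is infinite, choose a minimal reduction $J=(x_1,\ldots,x_d)$ of $I$ whose generators form a superficial sequence; the Cohen--Macaulay hypothesis makes it an $R$-regular sequence. Using the facts developed in Section~5 that superficial sequences in Cohen--Macaulay rings preserve $e_j(I)$ for $j=0,\ldots,d-1$, and that $\lambda(R/I)$ is unchanged modulo elements of $I$, the hypothesis descends verbatim to the one-dimensional Cohen--Macaulay ring $\bar R=R/(x_1,\ldots,x_{d-1})$ with minimal reduction $x_d\bar R$. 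So it suffices to settle the one-dimensional case and then lift.

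In dimension one, write $J=xR$ with $x$ regular. Multiplication by $x$ gives $R/I^n\iso xR/xI^n$, and combining this with the exact sequence $0\to xR/xI^n\to R/xI^n\to R/xR\to 0$ and the identity $\lambda(R/xR)=e_0(xR)=e_0(I)$ yields $\lambda(R/xI^n)=e_0(I)+\lambda(R/I^n)$. Since $xI^n\subseteq I^{n+1}$, one obtains
\[
\lambda(I^n/I^{n+1})\le e_0(I),\qquad\text{with equality iff } I^{n+1}=xI^n.
\]
Summing the nonnegative deficits for $n\ge 1$ (the sum is finite, as the terms eventually vanish) and telescoping against $P_I(n)=e_0(I)n-e_1(I)$ yields
\[
\sum_{n\ge 1}\bigl(e_0(I)-\lambda(I^n/I^{n+1})\bigr)=e_1(I)-e_0(I)+\lambda(R/I)\ge 0.
\]
This recovers Northcott's bound and shows the equality case forces every deficit to vanish, i.e.\ $I^{n+1}=xI^n$ for all $n\ge 1$, equivalent to $I^2=JI$ by the induction $I^{n+2}=I\cdot I^{n+1}=xI^{n+1}$. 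In the equality case, $H_I(n)=\lambda(R/I)+(n-1)e_0(I)=P_I(n)$ for $n\ge 1$; moreover $(I^{n+2}:x)=I^{n+1}$ by regularity of $x$, so the initial form $x^*\in G(I)_1$ is a nonzerodivisor and $G(I)$ is Cohen--Macaulay.

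To lift to general $d$, I induct on $d$. The inductive hypothesis yields $\bar I^2=\bar J\bar I$ in $\bar R=R/(x_1)$, that is, $I^2\subseteq JI+(x_1)$, so $I^2=JI+(x_1)\cap I^2$. The remaining step $(x_1)\cap I^2=x_1 I$ is the Valabrega--Valla condition. By induction $G(\bar I)$ is Cohen--Macaulay, and the Valabrega--Valla criterion lifts this (using that $x_1$ is a superficial regular element) to $G(I)$ Cohen--Macaulay with $(x_1,\ldots,x_d)\cap I^n=(x_1,\ldots,x_d)I^{n-1}$ for every $n\ge 1$; the case $n=2$ is $I^2=JI$. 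The vanishing $e_i(I)=0$ for $i\ge 2$ follows from $e_i(I)=e_i(I')$ together with the dimension drop in $I'$, and the Hilbert polynomial formula propagates via the relation $P_I(x)-P_I(x-1)=P_{I'}(x)$.

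The principal obstacle is this lifting: Cohen--Macaulayness of $G(I)$ and the identity $(x_1)\cap I^2=x_1I$ are essentially equivalent in the presence of a superficial regular element, so they must be established jointly. This circularity is broken by the inductive hypothesis supplying Cohen--Macaulayness of $G(\bar I)$, which the Valabrega--Valla criterion then lifts to $G(I)$.
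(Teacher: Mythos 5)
Your proof is essentially correct, but it follows a genuinely different route from the paper. The paper does not prove Huneke--Ooishi directly: it first establishes the Huckaba--Marley bounds $\sum_{n\geq 1}\lm(I^n/J\cap I^n)\leq e_1(I)\leq\sum_{n\geq 1}\lm(I^n/JI^{n-1})$ (with equality on the left iff $G(I)$ is Cohen--Macaulay), and then derives Huneke--Ooishi in a few lines: the hypothesis forces $e_1(I)=\lm(I/J)$, which is exactly the $n=1$ term of the left-hand sum, so all terms with $n\geq 2$ vanish, equality holds, $G(I)$ is Cohen--Macaulay, and $I^2=J\cap I^2=JI$; the coefficient statements then come from the Hilbert series of $G(I)$, computed via $G(I)/(a_1^*,\dots,a_d^*)\iso R/I\oplus I/J$, whose $h$-polynomial $\lm(R/I)+(e_0(I)-\lm(R/I))t$ has degree one, killing $e_i$ for $i\geq 2$. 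You instead rerun the induction-on-$d$ machinery (superficial sequence, one-dimensional computation, lift) specifically for this statement; your one-dimensional telescoping argument is in substance the paper's equation (3.1) from Section 4. What your route buys is self-containment (no need for the full Huckaba--Marley theorem); what the paper's route buys is brevity and the clean Hilbert-series computation of all the $e_i$.

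Two caveats on your write-up. First, the lemma that actually performs the lift is the Sally machine of Section 5 ($x_1$ superficial and $\depth G(I/(x_1))>0$ imply $x_1^*$ is $G(I)$-regular), not the Valabrega--Valla criterion: Valabrega--Valla translates $G(I)$-regularity of initial forms into the intersection conditions $(x_1,\dots,x_s)\cap I^n=(x_1,\dots,x_s)I^{n-1}$, but it cannot by itself produce the regularity of $x_1^*$ from depth information about $G(\bar I)$. Your parenthetical shows you have the right hypotheses in mind, but the attribution should be corrected. Second, propagating the Hilbert polynomial via $P_I(x)-P_I(x-1)=P_{\bar I}(x)$ determines $P_I$ only up to the constant $(-1)^de_d(I)$; to pin that down you should note that, once $x_1^*$ is $G(I)$-regular, $(I^n:x_1)=I^{n-1}$ for all $n$, so the exact relation $H_I(n)-H_I(n-1)=H_{\bar I}(n)$ holds for every $n\geq 1$ (not just asymptotically), and summing from $n=0$ recovers the stated closed form of $H_I(n)$, whence $e_i(I)=0$ for $i\geq 2$.
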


\noindent
Since $e_0(I)=\lm(R/J)$ for any minimal reduction $J$ of $I,$ we  can restate
the Huneke-Ooishi theorem as $e_1(I)=\lm(I/J)$ if and only if $JI=I^2.$
Huckaba \cite{huc} and Huckaba-Marley \cite{hm} obtained interesting generalization of Huneke-Ooishi theorem.

\begin{Theorem}[Huckaba, 1996] Let $(R,\mm)$ be a Cohen-Macaulay local ring of dimension $d$ with infinite residue field. Let $J$ 
be a minimal reduction of $I.$  Then
$e_1(I) \leq \sum_{n \geq 1}\lm(I^n/JI^{n-1})$ and equality holds 
if and only if  depth $G(I) \geq d-1.$
\end{Theorem}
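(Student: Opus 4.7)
The strategy is to quotient out a superficial sequence of length $d-1$ to reduce to the one-dimensional case, where a direct calculation pins down the sum. Using that the residue field is infinite and $R$ is Cohen-Macaulay, I first choose a minimal reduction $J = (x_1, \ldots, x_d)$ of $I$ whose generators form a superficial sequence for $I$; this is automatically a regular sequence since $J$ is a parameter ideal in a Cohen-Macaulay ring. Set $K := (x_1, \ldots, x_{d-1})$, $\bar R := R/K$ (a one-dimensional Cohen-Macaulay local ring), $\bar I := I\bar R$, and $\bar J := \bar x_d \bar R$ (a minimal reduction of $\bar I$). Standard theory of superficial sequences yields $e_1(I) = e_1(\bar I)$.

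\textbf{One-dimensional input.} In $\bar R$ the element $\bar x_d$ is a nonzerodivisor, so multiplication by $\bar x_d$ gives $\bar x_d \bar R/\bar x_d \bar I^{n-1} \cong \bar R/\bar I^{n-1}$. Combining this with the short exact sequence $0 \to \bar x_d \bar R/\bar x_d \bar I^{n-1} \to \bar R/\bar x_d \bar I^{n-1} \to \bar R/\bar x_d \bar R \to 0$, I would obtain
\[
\lm(\bar I^n/\bar x_d \bar I^{n-1}) = e_0(\bar I) + \lm(\bar R/\bar I^{n-1}) - \lm(\bar R/\bar I^n).
\]
Summing telescopically up to $N \gg 0$ and substituting the Hilbert polynomial $P_{\bar I}(N) = e_0(\bar I) N - e_1(\bar I)$ gives $\sum_{n \geq 1} \lm(\bar I^n/\bar x_d \bar I^{n-1}) = e_1(\bar I) = e_1(I)$; in particular the theorem is trivial when $d=1$, since $\depth G(I) \geq 0$ is automatic.

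\textbf{Comparison and equality case.} The natural surjection $I^n/JI^{n-1} \twoheadrightarrow \bar I^n/\bar x_d \bar I^{n-1}$ has kernel $(I^n \cap K + JI^{n-1})/JI^{n-1}$, so $\lm(I^n/JI^{n-1}) \geq \lm(\bar I^n/\bar x_d \bar I^{n-1})$. Summing yields $\sum_{n \geq 1} \lm(I^n/JI^{n-1}) \geq e_1(I)$. Equality holds iff every kernel vanishes, i.e., $I^n \cap K \subseteq JI^{n-1}$ for all $n \geq 1$. The main obstacle will be to upgrade this inclusion to the full Valabrega-Valla condition $I^n \cap K = KI^{n-1}$. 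I plan to do this by induction on $n$: writing $z \in I^n \cap K \subseteq JI^{n-1}$ as $\sum_{i<d} x_i a_i + x_d b$ with $a_i, b \in I^{n-1}$, the relation $x_d b \in K$ combined with the fact that $x_d$ is a nonzerodivisor on $R/K$ (regular sequence in a Cohen-Macaulay ring) forces $b \in K$, hence $b \in K \cap I^{n-1} = KI^{n-2}$ by the inductive hypothesis, so $z \in KI^{n-1}$. The Valabrega-Valla criterion then identifies this with $x_1^*, \ldots, x_{d-1}^* \in G(I)_1$ forming a regular sequence, whence $\depth G(I) \geq d-1$. For the converse, I would use the infinite residue field to select $x_1, \ldots, x_{d-1} \in I$ with $x_i^*$ a regular sequence in $G(I)$, extend them to a minimal reduction $J$, and apply Valabrega-Valla to collapse each kernel, securing equality.
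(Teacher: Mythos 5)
Your proposal is correct and follows essentially the same route as the paper: reduce to dimension one by factoring out a superficial (hence regular) sequence $x_1,\dots,x_{d-1}$ generating part of $J$, use the telescoping one-dimensional identity $e_1(\bar I)=\sum_n\lm(\bar I^n/\bar x_d\bar I^{n-1})$, identify the defect of each term as $(I^n\cap K+JI^{n-1})/JI^{n-1}$, and in the equality case upgrade $I^n\cap K\subseteq JI^{n-1}$ to $I^n\cap K=KI^{n-1}$ by the same induction on $n$ using regularity of $x_d$ modulo $K$, finishing with Valabrega--Valla. The only cosmetic difference is that the paper packages the argument as an induction on $d$, while you collapse to dimension one in a single step.
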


The Cohen-Macaulay property  of $G(I)$ was  characterized in terms  
of $e_1(I)$ by Huckaba and Marley \cite{hm}.

\begin{Theorem} [Huckaba-Marley, 1997] 
Let $(R,\mm)$ be a Cohen-Macaulay local ring of dimension $d$ with infinite 
residue field. Let $J$ 
be a minimal reduction of an $\mm$-primary ideal  $I.$  Then
$e_1(I) \geq \sum_{n \geq 1}\ell(I^n/J \cap I^{n})$ and equality holds 
if and only if $G(I)$  is Cohen-Macaulay.
\end{Theorem}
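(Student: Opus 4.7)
The plan is to prove the statement by induction on $d = \dim R$, using a superficial element of $J$ to descend to lower dimension. In the base case $d = 1$, write $J = (x)$ with $x$ a superficial element. Since $x$ is $R$-regular, a rank argument in dimension one gives $\ell(I^k/xI^k) = \ell(R/xR) = e_0(I)$ for every $k \geq 0$, and the short exact sequence
$$0 \to I^{k+1}/xI^k \to I^k/xI^k \to I^k/I^{k+1} \to 0$$
yields $\ell(I^{k+1}/xI^k) = e_0(I) - \ell(I^k/I^{k+1})$. Summing over $k$ from $0$ to $n-1$ telescopes to $ne_0(I) - \ell(R/I^n)$, which equals $e_1(I)$ for $n$ large; hence
$$e_1(I) = \sum_{n \geq 1} \ell(I^n/xI^{n-1}).$$
Since $xI^{n-1} = JI^{n-1} \subseteq J \cap I^n$, the inequality follows immediately, and equality holds iff $xI^{n-1} = J \cap I^n$ for every $n \geq 1$. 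By the Valabrega--Valla criterion this is equivalent to $x^* \in G(I)_1$ being a non-zero-divisor on $G(I)$, which in dimension one is the same as $G(I)$ being Cohen-Macaulay.

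For the inductive step, assume $d \geq 2$ and pick a superficial element $x \in J \setminus \mm J$; then $x$ is $R$-regular, $\bar R = R/xR$ is Cohen-Macaulay of dimension $d - 1$, and $\bar J$ is a minimal reduction of $\bar I$. Since $x \in J$, the modular law gives $J \cap (I^n + xR) = (J \cap I^n) + xR$, which in turn yields a natural isomorphism
$$I^n/(J \cap I^n) \;\cong\; \bar I^n/(\bar J \cap \bar I^n)$$
for every $n \geq 1$. Because superficial reduction preserves the first $d$ Hilbert coefficients, $e_1(I) = e_1(\bar I)$, and the induction hypothesis gives
$$e_1(I) = e_1(\bar I) \geq \sum_{n \geq 1}\ell\bigl(\bar I^n/(\bar J \cap \bar I^n)\bigr) = \sum_{n \geq 1}\ell\bigl(I^n/(J \cap I^n)\bigr),$$
with equality for $I$ equivalent to equality for $\bar I$.

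For the equality characterization, the direction ``$G(I)$ Cohen-Macaulay implies equality'' is quick: since $G(I)$ is CM, for a generic minimal reduction the initial forms of the generators form a regular sequence on $G(I)$, so $JI^{n-1} = J \cap I^n$ for all $n$, and Huckaba's Theorem~2.4 (applicable because depth $G(I) \geq d-1$) gives $e_1(I) = \sum \ell(I^n/JI^{n-1}) = \sum \ell(I^n/(J \cap I^n))$. The converse is where I expect the main obstacle: from equality for $I$ we deduce equality for $\bar I$ via the length identity, and hence by induction $G(\bar I)$ is Cohen-Macaulay; it then remains to ascend this conclusion to $G(I)$. The right tool is Sally's descent lemma: when $d \geq 2$ and depth $G(\bar I) \geq 1$, the initial form $x^*$ is regular on $G(I)$, so $G(I)/x^*G(I) \cong G(\bar I)$ and depth $G(I) = 1 + \text{depth } G(\bar I) = d$. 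This ascent step is the technical heart of the argument; the remainder reduces to routine modular-law bookkeeping and the one-dimensional telescoping identity.
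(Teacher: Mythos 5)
Your proposal follows essentially the same route as the paper: induction on $d$ with the one-dimensional telescoping identity $e_1(I)=\sum_n\ell(I^n/JI^{n-1})$ as base case, descent modulo a superficial element $x\in J$ using the isomorphism $I^n/(J\cap I^n)\cong \bar I^n/(\bar J\cap\bar I^n)$ and the invariance of $e_1$, and the Sally machine to lift the Cohen--Macaulayness of $G(\bar I)$ back to $G(I)$. The only (harmless) deviation is that for the direction ``$G(I)$ Cohen--Macaulay $\Rightarrow$ equality'' you invoke Huckaba's theorem together with Valabrega--Valla, where the paper simply descends modulo $a_1^*$ and applies the induction hypothesis directly.
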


 We will provide simple proofs of both these theorems using induction on $d.$
One of the crucial tools used in the proofs is the so called 
{\em Sally Machine.} We will also provide a new proof of Sally machine due to 
B. Singh.

Now we turn to another line of research that relates depth of $G(I)$ with 
Hilbert coefficients.  The starting point is an inequality due to Abhyankar
\cite{a}. Let $\mu(I)$ denote the minimum number of elements required to generate 
an ideal $I$ in a local ring $(R,\mm).$

\begin{Theorem}[Abhyankar, 1967] Let $(R,\mm)$ be a Cohen-Macaulay local ring of dimension  $d.$ Then  
\begin{eqnarray} \label{abin}
e_0(\mm) & \geq & \mu(\mm)-d +1.
\end{eqnarray}   
\end{Theorem}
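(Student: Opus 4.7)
The plan is to reduce to the case of infinite residue field, pick a minimal reduction of $\mm$ generated by a system of parameters, and then compare lengths via a short exact sequence.

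First I would reduce to the case where $R/\mm$ is infinite. The standard trick is to pass from $R$ to $R' = R[x]_{\mm R[x]}$, which is again Cohen-Macaulay of the same dimension $d$, has infinite residue field, and satisfies $\mu(\mm R') = \mu(\mm)$ as well as $e_0(\mm R') = e_0(\mm)$. So it suffices to prove the inequality under the assumption that $R/\mm$ is infinite.

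With $R/\mm$ infinite, the theory of reductions surveyed earlier guarantees the existence of a minimal reduction $J$ of $\mm$ generated by exactly $d$ elements $x_1,\dots,x_d$. Since $R$ is Cohen-Macaulay of dimension $d$, the sequence $x_1,\dots,x_d$ is necessarily a regular sequence, so $J$ is a parameter ideal and
\[
e_0(\mm) \;=\; e_0(J) \;=\; \lm(R/J).
\]
The short exact sequence $0 \to \mm/J \to R/J \to R/\mm \to 0$ then gives
\[
e_0(\mm) \;=\; \lm(R/J) \;=\; 1 + \lm(\mm/J).
\]

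The last step is to bound $\lm(\mm/J)$ from below by counting generators modulo $\mm^2$. Since $J + \mm^2 \subseteq \mm$, the surjection $\mm/J \twoheadrightarrow \mm/(J+\mm^2)$ yields
\[
\lm(\mm/J) \;\geq\; \lm\bigl(\mm/(J+\mm^2)\bigr) \;=\; \dim_{R/\mm}\bigl(\mm/(J+\mm^2)\bigr).
\]
Now $\mm/\mm^2$ has dimension $\mu(\mm)$ over $R/\mm$, and the image of $J$ in $\mm/\mm^2$ is spanned by the residues of $x_1,\dots,x_d$, hence has dimension at most $d$. Consequently
\[
\dim_{R/\mm}\bigl(\mm/(J+\mm^2)\bigr) \;\geq\; \mu(\mm) - d,
\]
and combining with the previous display gives $e_0(\mm) \geq \mu(\mm) - d + 1$, as required.

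The only subtle ingredient is the existence of a $d$-generated minimal reduction of $\mm$, which needs the infinite-residue-field reduction and the machinery of superficial elements developed in the paper; once that is available, the rest is a short length computation and the main obstacle is really just keeping track of which inequality uses the Cohen-Macaulay hypothesis (namely, the equality $e_0(\mm) = \lm(R/J)$, which would fail without it).
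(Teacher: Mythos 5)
Your argument is correct, and it is worth noting that the paper itself offers no proof of Abhyankar's inequality: the statement appears only in the survey of Section~2, so there is nothing to compare against line by line. What you have written is the standard proof, and every ingredient you invoke is actually developed later in the paper: the passage to $R[x]_{\mm R[x]}$ to get an infinite residue field, the fact that a minimal reduction $J$ of $\mm$ is generated by $\ell(\mm)=\dim R=d$ elements (Section~3), and the fact that a superficial (equivalently, parameter) sequence of length $d$ in a Cohen--Macaulay ring is a regular sequence (Section~5), which gives $e_0(\mm)=e_0(J)=\lm(R/J)$. The final length count $\lm(R/J)=1+\lm(\mm/J)\geq 1+\lm\bigl(\mm/(J+\mm^2)\bigr)\geq 1+\mu(\mm)-d$ is correct, since the image of $J$ in $\mm/\mm^2$ is spanned by $d$ residues. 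Your closing remark correctly isolates where Cohen--Macaulayness enters; one could add that the argument also shows equality forces $J+\mm^2=\mm$, i.e.\ $J\mm=\mm^2$ by Nakayama, which is exactly the minimal multiplicity situation of Sally's theorem quoted in the same section.
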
  

J. Sally, in a long series of papers, investigated the effect of 
similar inequalities and equalities on the depth of $G(I).$ 
First she considered  Cohen-Macaulay rings in which (\ref{abin}) is an equality \cite{s1}. Such  rings are said to have {\em minimal multiplicity} or {\em maximal embedding dimension}.
 
\begin{Theorem}[Sally, 1977] Let $(R,\mm)$ be a $d$-dimensional Cohen-Macaulay local ring  with infinite residue field. Let $J$ be a minimal reduction of $\mm.$ Then
$R$ has minimal multiplicity if and only if $J\mm=\mm^2.$  In this case, 
$G(\mm)$ is  Cohen-Macaulay  and for all $n \geq 0,$
$$H_{\mm}(n)=P_{\mm}(n)= e_0(\mm)\binom{n+d-1}{d}-e_1(\mm)\binom{n+d-2}{d-1}.$$  
\end{Theorem}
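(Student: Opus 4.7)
The plan is to reduce Sally's theorem directly to the Huneke--Ooishi theorem (Theorem~2.3) by means of a short colength computation. First I would establish the equivalence ``$R$ has minimal multiplicity if and only if $J\mm=\mm^2$''. Since $R$ is Cohen--Macaulay of dimension $d$ and $J$ is a minimal reduction of $\mm$, $J$ is generated by a system of parameters, which is a regular sequence; in particular $\lm(R/J)=e_0(\mm)$. Because $J$ is $\mm$-primary one has $\hei J=d$, so $J$ cannot be generated by fewer than $d$ elements, and hence by Nakayama $J/J\mm\iso(R/\mm)^d$ and $\lm(J/J\mm)=d$. Therefore
$$\lm(R/J\mm)=\lm(R/J)+\lm(J/J\mm)=e_0(\mm)+d.$$
Combined with $\lm(R/\mm^2)=1+\mu(\mm)$ and the containment $J\mm\subseteq\mm^2$, the equality $J\mm=\mm^2$ is equivalent to $\lm(R/J\mm)=\lm(R/\mm^2)$, i.e.\ to $e_0(\mm)=\mu(\mm)-d+1$, which is precisely Abhyankar's inequality~(\ref{abin}) attained with equality, i.e.\ minimal multiplicity.

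Once $J\mm=\mm^2$ is in hand, the remaining assertions---Cohen--Macaulayness of $G(\mm)$, the vanishing $e_i(\mm)=0$ for $i\ge 2$, and validity of the displayed formula $H_{\mm}(n)=P_{\mm}(n)$ for every $n\ge 0$---follow immediately from the Huneke--Ooishi theorem (Theorem~2.3) applied with $I=\mm$, since that theorem already packages exactly these conclusions whenever $JI=I^2$ for a minimal reduction $J$ of an $\mm$-primary ideal $I$.

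The only technical point deserving care is the identification $\lm(J/J\mm)=d$, which rests on the fact that a minimal reduction $J$ of $\mm$ in a Cohen--Macaulay ring has exactly $d$ generators which form a minimal generating set; this is automatic since $\hei J=d$ forces any generating set to have at least $d$ elements, while standard reduction theory (to be developed in Section~3) produces a reduction generated by $d$ elements whenever the residue field is infinite. Beyond this purely arithmetic check the argument is essentially a one-line invocation of the prior theorem, so I do not anticipate a serious obstacle.
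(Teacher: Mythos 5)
Your argument is correct. The equivalence is established by the clean colength computation: for a minimal reduction $J$ of $\mm$ one has $\mu(J)=d$ (since $\mu(J)\geq\hei J=d$ and, with infinite residue field, $J$ is generated by $\ell(\mm)=d$ elements), $J$ is a parameter ideal generated by a regular sequence, so $\lm(R/J)=e_0(\mm)$ and $\lm(R/J\mm)=e_0(\mm)+d$, while $\lm(R/\mm^2)=1+\mu(\mm)$; since $J\mm\subseteq\mm^2$, equality of ideals is equality of colengths, which is exactly Abhyankar's bound attained. The remaining assertions then do follow from the Huneke--Ooishi statement applied to $I=\mm$.

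For comparison: the paper states Sally's 1977 theorem only in its survey section and never returns to prove it; the closest it comes is the ``Consequences of Huckaba--Marley Theorem'' subsection, where the Huneke--Ooishi theorem itself is derived (via the inequality $e_1(I)\geq\sum_{n\geq 1}\lm(I^n/J\cap I^n)$ and a Hilbert series computation), and where a different result of Sally (the 1980 one on $r(\mm)\leq 2$) is proved. So there is no proof in the paper to match yours against. That said, your route is precisely the one the paper's architecture invites: you supply the missing arithmetic bridge (minimal multiplicity $\Leftrightarrow$ $J\mm=\mm^2$, i.e.\ $r_J(\mm)\leq 1$) and then invoke the already-proved Huneke--Ooishi corollary for the Cohen--Macaulayness of $G(\mm)$ and the exact Hilbert polynomial. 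One small remark: the Cohen--Macaulay hypothesis is used only to guarantee $\lm(R/J)=e_0(J)=e_0(\mm)$; the count $\lm(J/J\mm)=d$ needs nothing beyond $\hei J=d$ and the existence of a $d$-generated minimal reduction. No gaps.
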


\bigskip
\noindent {\bf Sally's conjecture}
\bigskip

We say that a Cohen-Macaulay local ring $(R,\mm)$ 
has {\em almost maximal embedding dimension}  or {\em almost minimal multiplicity } 
if  $\mu(\mm)=e_0(\mm)+d-2.$ Such rings have been a subject of investigation
since the appearance the paper \cite{s2} of J. D. Sally. In this paper, 
among other things, she proved the following

\begin{Theorem}[Sally, 1980]  
Let $(R,\mm)$ be a   Gorenstein local ring
of positive dimension $d$ and having almost maximal embedding dimension. 
Then $G(\mm)$ is Gorenstein and for all $n \geq 2,$
$$\lambda (\mm^n/\mm^{n+1})=e_0(\mm)\binom{n+d-2}{d-1}+\binom{n+d-3}{n}.$$ 
\end{Theorem}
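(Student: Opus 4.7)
The plan is to pass to an Artinian quotient $\bar R = R/J$ modulo a minimal reduction $J$ whose Hilbert function can be read off directly from the numerical hypotheses, and then transfer this information back to $G(\mm)$ by showing $G(\mm)$ is Cohen-Macaulay.

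After replacing $R$ by $R[X]_{\mm R[X]}$ I may assume $R/\mm$ is infinite; this preserves the Gorenstein property, the multiplicity, the embedding dimension and the Hilbert function. Choose a minimal reduction $J=(x_1,\ldots,x_d)$ of $\mm$ with the $x_i$'s simultaneously superficial for $\mm$ and linearly independent in $\mm/\mm^2$, which is possible for generic $k$-linear combinations of a minimal generating set of $\mm$. Then $\bar R:=R/J$ is Artinian Gorenstein (quotient of a Gorenstein ring by a regular sequence) with $\lambda(\bar R)=\lambda(R/J)=e_0(\mm)$ and $\mu(\bar\mm)=\mu(\mm)-d=e_0(\mm)-2=:h$. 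Hence $\lambda(\bar\mm^2)=e_0(\mm)-1-h=1$, and Nakayama applied to the length-one module $\bar\mm^2$ forces $\bar\mm^3=0$. So $\bar R$ has Hilbert function $1,h,1,0,\ldots$ and socle $\mathrm{Soc}(\bar R)=\bar\mm^2$.

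The crux is to prove $G(\mm)$ is Cohen-Macaulay, equivalently (by Valabrega-Valla) that $J\cap\mm^{n+1}=J\mm^n$ for every $n\ge 1$. For $n=1$, the exact sequence
\[
0\longrightarrow(J\cap\mm^2)/J\mm\longrightarrow J/J\mm\longrightarrow(J+\mm^2)/\mm^2\longrightarrow 0
\]
has outer $k$-vector spaces of dimension $d$ each (by the choice of the $x_i$'s), so $J\cap\mm^2=J\mm$. Consequently $\mm^2/J\mm\hookrightarrow\bar R$ with image $\bar\mm^2$, giving $\lambda(\mm^2/J\mm)=1$; Nakayama on this length-one module yields $\mm\cdot\mm^2\subseteq J\mm$, that is, $\mm^3\subseteq J\mm$. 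For $n\ge 2$ it suffices to prove $\mm^3=J\mm^2$, since then an easy induction gives $\mm^{n+1}=J\mm^n$, which together with $\mm^{n+1}\subseteq\mm^3\subseteq J$ gives $J\cap\mm^{n+1}=\mm^{n+1}=J\mm^n$. Writing $\mm^2=J\mm+Rz$ for any lift $z\in\mm^2$ of the socle generator of $\bar R$, the remaining task reduces to showing $\mm z\subseteq J\mm^2$, and this is the main obstacle of the proof: the Gorenstein hypothesis on $R$ must enter here essentially, since the analogous statement can fail for Cohen-Macaulay non-Gorenstein rings of almost maximal embedding dimension. The argument invokes the self-duality of the Artinian Gorenstein ring $\bar R$ (equivalently, the irreducibility of $J$ in $R$) to force every $yz\in\mm^3$ with $y\in\mm$ to be expressible as $\sum_i x_i r_i$ with $r_i\in\mm^2$.

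Once $G(\mm)$ is Cohen-Macaulay, the initial forms $x_1^*,\ldots,x_d^*$ form a regular sequence in $G(\mm)$ and $G(\mm)/(x_1^*,\ldots,x_d^*)\cong G(\bar\mm)$, a standard graded Artinian $k$-algebra with Hilbert function $1,h,1$. To conclude $G(\mm)$ is Gorenstein it suffices to show $G(\bar\mm)$ has one-dimensional socle: the top component is already one-dimensional, and no socle appears in degree one, for any $\bar y\in\bar\mm\setminus\bar\mm^2$ with $\bar y\,\bar\mm\subseteq\bar\mm^3=0$ would lie in $\mathrm{Soc}(\bar R)=\bar\mm^2$, contradicting $\bar y\notin\bar\mm^2$. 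Hence $G(\bar\mm)$ is Gorenstein, and lifting along the regular sequence $x_i^*$ shows $G(\mm)$ itself is Gorenstein. The Hilbert series $\sum_{n\ge 0}\lambda(\mm^n/\mm^{n+1})\,t^n=(1+ht+t^2)/(1-t)^d$ combined with $1/(1-t)^d=\sum_{k\ge 0}\binom{k+d-1}{d-1}t^k$ yields
\[
\lambda(\mm^n/\mm^{n+1})=\binom{n+d-1}{d-1}+h\binom{n+d-2}{d-1}+\binom{n+d-3}{d-1},
\]
which, using $h=e_0(\mm)-2$, telescopes via Pascal's identity to $e_0(\mm)\binom{n+d-2}{d-1}+\binom{n+d-3}{n}$ for all $n\ge 2$, as required.
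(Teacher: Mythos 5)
Your overall strategy is the right one and matches Sally's original approach (the paper itself only states this theorem in its survey section and gives no proof of it): pass to the Artinian Gorenstein reduction $\bar R=R/J$ with Hilbert function $1,h,1$, verify the Valabrega--Valla conditions $J\cap\mm^{n+1}=J\mm^n$, and read off both the Gorenstein property of $G(\mm)$ and the Hilbert function from $G(\mm)/(x_1^*,\dots,x_d^*)\cong G(\bar\mm)$. The reduction to an infinite residue field, the case $n=1$, the deduction $\mm^3\subseteq J\mm$, the socle analysis of $G(\bar\mm)$, and the final binomial identity are all correct.

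However, there is a genuine gap exactly where you acknowledge ``the main obstacle'': the claim $\mm z\subseteq J\mm^2$, equivalently $\mm^3=J\mm^2$. You assert that the self-duality of $\bar R$ ``forces'' every $yz$ to be expressible as $\sum_i x_ir_i$ with $r_i\in\mm^2$, but no argument is given, and it is not a formality. From $yz\in\mm^3\subseteq J\mm$ one only gets $yz=\sum_i x_ia_i$ with $a_i\in\mm$; since $x_1,\dots,x_d$ is a regular sequence, the classes of the $a_i$ in $\bar\mm/\bar\mm^2$ are well-defined invariants of $y$ modulo $\mm^2$, and what must be shown is that the resulting linear map $\bar\mm/\bar\mm^2\to(\bar\mm/\bar\mm^2)^d$ vanishes. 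That is precisely where the nondegenerate pairing $\bar\mm/\bar\mm^2\times\bar\mm/\bar\mm^2\to\bar\mm^2\cong k$ has to be exploited, and it is the actual content of Sally's theorem. That this step cannot be free is confirmed by the neighbouring results quoted in the survey: for Cohen--Macaulay rings with $\lambda(\mm^2/J\mm)=1$ one only gets $\depth G(\mm)\geq d-1$ (Rossi, Rossi--Valla), and $G(\mm)$ genuinely fails to be Cohen--Macaulay in the maximal-type non-Gorenstein case. Everything you actually prove uses only $\lambda(\mm^2/J\mm)=1$ and would therefore ``prove'' a false statement; the missing duality computation is the theorem.
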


In the paper \cite{s3}, Sally studied depth of $G(\mm)$  for  Cohen-Macaulay
local ring $(R,\mm)$  of almost  maximal dimension by means of their type.
Recall that the type of a Cohen-Macaulay local ring $R$, denoted by $\mbox{type}(R),$  is defined to be 
$\dim_{k}\Ext^d(k,R).$ For  rings of almost maximal dimension, depth of $G(\mm)$ is dependent on  $\mbox{type}(R).$    

\begin{Theorem} [Sally, 1983] Let $(R,\mm)$ be a Cohen-Macaulay local ring of 
positive dimension $d.$ Let $R$ have almost maximal embedding dimension. Then 
$\mbox{type}(R) \leq e_0(\mm)-2.$ If $\mbox{type}(R) < e_0(\mm)-2,$ then
$G(\mm)$ is Cohen-Macaulay, $R$ and $G(\mm)$ have the same type and for all 
$n \geq 2,$
$$\lm(\mm^n/\mm^{n+1})=e_0(\mm)\binom{n+d-2}{d-1}+\binom{n+d-3}{n}.$$  
\end{Theorem}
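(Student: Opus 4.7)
The plan is to reduce to dimension one via the Sally machine and then analyze the artinian quotient $\bar R = R/(x)$ using the symmetric bilinear form induced by its multiplication.

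First I would pick a superficial sequence $x_1, \ldots, x_{d-1} \in \mm \setminus \mm^2$; since $R$ is Cohen--Macaulay this is a regular sequence, and the quotient $\bar R := R/(x_1, \ldots, x_{d-1})$ is a $1$-dimensional Cohen--Macaulay local ring with $e_0(\bar\mm) = e_0(\mm)$, $\mbox{type}(\bar R) = \mbox{type}(R)$ (via the standard $\Ext$ shift by a regular element), and $\mu(\bar\mm) = \mu(\mm) - (d-1) = e_0 - 1$. The Sally machine transfers Cohen--Macaulayness of $G$ between $R$ and $\bar R$, and dividing the Hilbert series of $G(\bar\mm)$ by $(1-t)^{d-1}$ yields that of $G(\mm)$, so I may assume $d = 1$. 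With $(x)$ a minimal reduction, $x \in \mm \setminus \mm^2$, a length count using $\lm(R/(x)) = e_0 = \lm(R/\mm^2)$ gives $(x) \cap \mm^2 = x\mm$ and $\lm(\mm^2/x\mm) = 1$, so $\mm^2 = x\mm + Ry$ for some $y \in \mm^2 \setminus x\mm$. In $\bar R$ with maximal ideal $\Mm$: $\mu(\Mm) = e_0 - 2$ and $\Mm^2 = (\bar y)$ has length $1$, so Nakayama applied to $\Mm \cdot \Mm^2$ forces $\Mm^3 = 0$; in particular $\mm^3 \subseteq (x) \cap \mm^2 = x\mm$.

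Multiplication in $\bar R$ then induces a nonzero symmetric bilinear form $\beta : V \times V \to \Mm^2 \cong k$ on $V = \Mm/\Mm^2$, where $\dim_k V = e_0 - 2$. The socle of $\bar R$ decomposes as $\Mm^2 \oplus S_1$ with $S_1 \subseteq V$ the radical of $\beta$. Since $\beta \ne 0$, $\dim_k S_1 \le e_0 - 3$, and
\[
\mbox{type}(R) \;=\; 1 + \dim_k S_1 \;\le\; e_0 - 2,
\]
which proves (a).

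For (b), suppose $\mbox{type}(R) < e_0 - 2$, so $\mbox{rank}(\beta) \ge 2$. By Valabrega--Valla, in dimension one $G(\mm)$ is Cohen--Macaulay iff $(x) \cap \mm^{n+1} = x\mm^n$ for all $n$; since $\mm^3 \subseteq x\mm$, a simple induction reduces this to the single equation $\mm^3 = x\mm^2$, i.e., $\mm y \subseteq x\mm^2$. Division by $x$ defines the $R$-linear ``lift'' $\psi : \mm \to \mm$, $u \mapsto uy/x$, which descends to $\bar\psi : V \to V$ because $\psi(\mm\cdot \mm) \subseteq \mm \cdot \psi(\mm) \subseteq \mm^2$. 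Multiplying the defining relation $xs = uy$ by $m \in \mm$ gives $sm = u\psi(m)$, whence $\beta(\bar\psi u, m) = \beta(u, \bar\psi m)$: the operator $\bar\psi$ is $\beta$-self-adjoint, and the desired identity $\mm y \subseteq x\mm^2$ is precisely $\bar\psi = 0$. \emph{The main obstacle} is to derive $\bar\psi = 0$ from $\mbox{rank}(\beta) \ge 2$. The approach is a coefficient analysis in $\mm^2 = x\mm + Ry$: the self-adjointness of $\bar\psi$, combined with the structural inclusion $\mm^4 \subseteq x\mm^2$ (immediate from $\mm^3 \subseteq x\mm$) and its iterates, forces $\bar\psi(V) \subseteq S_1$; then a dimension count on $\bar\psi$-stable subspaces of $V$ shows that $\bar\psi \ne 0$ would force $\mbox{rank}(\beta) \le 1$, contradicting the hypothesis.

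Once $G(\mm)$ is Cohen--Macaulay, $G(\mm)/(x^*) = \gr_\Mm \bar R$, and because $\Mm^3 = 0$ the socles of $\bar R$ and $\gr_\Mm \bar R$ agree as $k$-vector spaces, yielding $\mbox{type}(G(\mm)) = \mbox{type}(R)$. The Hilbert series is
\[
H_{G(\mm)}(t) \;=\; \frac{1 + (e_0 - 2)\,t + t^2}{(1 - t)^d},
\]
and Pascal's identity then gives
\[
\lm\bigl(\mm^n/\mm^{n+1}\bigr) \;=\; e_0 \binom{n+d-2}{d-1} + \binom{n+d-3}{n} \qquad \text{for all } n \ge 2.
\]
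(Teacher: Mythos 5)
The paper states Sally's 1983 theorem only in its survey section and cites \cite{s3} for the proof, so there is no in-paper argument to compare against; judging your proposal on its own terms: the reduction to $d=1$ via a superficial (hence regular) sequence, the structure $\mm^2=x\mm+Ry$, $\lm(\mm^2/x\mm)=1$, $\Mm^3=0$, the pairing $\beta$ on $V=\Mm/\Mm^2$, and the bound $\mbox{type}(R)=1+\dim_kS_1\le e_0-2$ of part (a) are all correct, as are the concluding Hilbert-series and type computations once Cohen--Macaulayness of $G(\mm)$ is known. The problem is that the step you yourself label ``the main obstacle'' --- deducing $\bar\psi=0$ from $\mbox{rank}(\beta)\ge 2$ --- is not carried out, and the route you sketch for it cannot work as described. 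First, the structural relations do not give $\bar\psi(V)\subseteq S_1$: writing $ab=xs_{ab}+r_{ab}y$ and using $y^2=x\psi(y)$ one gets $a\psi(b)=x\psi(s_{ab})+r_{ab}\psi(y)$, hence $\beta(\bar a,\bar\psi\bar b)=\rho\,\beta(\bar a,\bar b)$ where $\rho\in k$ is defined by $y^2\equiv\rho xy \pmod{x^2\mm}$; what follows is $(\bar\psi-\rho\,\mathrm{id})(V)\subseteq S_1$, and $\bar\psi(V)\subseteq S_1$ is \emph{equivalent} to $\rho=0$, which is part of what must be proved. Second, even if one knew $\bar\psi(V)\subseteq S_1$, self-adjointness becomes vacuous ($\beta(\bar\psi u,m)=0=\beta(u,\bar\psi m)$ automatically), so no dimension count on $\bar\psi$-stable subspaces can force $\bar\psi=0$: a nonzero operator sending one non-radical vector into the radical is $\beta$-self-adjoint no matter how large $\mbox{rank}(\beta)$ is. The contradiction in Sally's theorem has to come from further ring-theoretic identities (the associativity relations linking $\psi$, $y$ and $\beta$), and these are exactly what is missing.

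A secondary imprecision: $\bar\psi=0$ only says $\psi(\mm)\subseteq\mm^2+(x)$, whereas $\mm y\subseteq x\mm^2$ requires $\psi(\mm)\subseteq\mm^2$; if $\psi(u)=cx+w$ with $w\in\mm^2$ and $c$ a unit, then $uy=cx^2+xw$ need not lie in $x\mm^2$. This gap is fillable --- a unit $c$ would force $x^2\in\mm^3$, hence $(x^*)^2=0$ in $G(\mm)$, which is impossible since $x^*$ generates a reduction of $G(\mm)_+$ and $\dim R=1$ --- but the word ``precisely'' in your reduction is not justified as written. In summary, part (a) and the outer framework of part (b) are sound, but the core implication $\mbox{type}(R)<e_0-2\Rightarrow \mm^3=x\mm^2$ remains unproved.
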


In \cite{s3}, Sally raised the question about depth of $G(\mm)$ for a 
Cohen-Macaulay local ring of almost maximal embedding dimension and maximal type 
$e_0(\mm)-2.$  This question 
remained open  for several years. It was answered independenly by M. E. Rossi 
and G. Valla in \cite {rv} and H-J Wang in \cite {w1}. The following theorem
summarizes the main results found in \cite{rv}.

\begin{Theorem}[Rossi-Valla, 1996] 
Let $(R,\mm)$ be a Cohen-Macaulay local ring of positive
dimension $d.$ Put 
$$G=G(\mm), \;\;H_G(n)=\lm(\mm^n/\mm^{n+1}),\; \mbox{and}\;\;   
P_G(z)=\sum_{n=0}^{\infty}H_G(n)z^n.$$ 
Then the following are equivalent:\\
{\rm (1)}~$R$ has almost maximal embedding dimension. \\
{\rm (2)}~ There is an integer $s$ such that $2 \leq s \leq  \mu(\mm)-d+1,$  and 
$$P_G(z)=\frac{1+(\mu(\mm)-d)z+z^s}{(1-z)^d}.$$
{\rm (3)}~If either of the above conditions  holds then $\depth G(I)  \geq d-1$ and $G$ 
is Cohen-Macaulay if and only of $s=2.$ 
\end{Theorem}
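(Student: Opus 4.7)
The plan is to prove (1) $\Leftrightarrow$ (2) by induction on $d$, reducing to $d=1$ via a superficial sequence and lifting back by the Sally machine combined with the Valabrega--Valla criterion; (3) will then fall out of the lifting and the Huneke--Ooishi theorem (Theorem~2.3). So I would first pick a superficial sequence $x_1,\dots,x_{d-1}$ for $\mm$ consisting of minimal generators (available since $R/\mm$ is infinite), and pass to $\bar R := R/(x_1,\dots,x_{d-1})$. This is a $1$-dimensional Cohen--Macaulay ring with $e_0(\bar{\mm}) = e_0(\mm)$ and $\mu(\bar{\mm}) = \mu(\mm) - (d-1)$, so (1) becomes $\mu(\bar{\mm}) = e_0(\bar{\mm}) - 1$ over $\bar R$, and the problem sits in dimension~$1$.

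The heart of the argument is the $d=1$ case. Write $e := e_0(\bar{\mm})$ and let $J=(x)$ be a minimal reduction. Since $x$ is a non-zero-divisor, a short length-count using the isomorphism $\bar R/\bar{\mm}^n \cong x\bar R/x\bar{\mm}^n$ gives
\[
\lm(\bar{\mm}^n/x\bar{\mm}^n) \;=\; \lm(\bar R/x\bar R) \;=\; e \qquad \text{for every } n \ge 0.
\]
Combined with $x\bar{\mm}^n \subseteq \bar{\mm}^{n+1}$ this rewrites as $H_G(n) + \lm(\bar{\mm}^{n+1}/x\bar{\mm}^n) = e$, so $H_G(n) \le e$ with equality iff $\bar{\mm}^{n+1} = x\bar{\mm}^n$. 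Define
\[
s \;:=\; \min\{n \ge 1 : \bar{\mm}^n = x\bar{\mm}^{n-1}\}.
\]
From $H_G(1) = \mu(\bar{\mm}) = e - 1 < e$ one has $s \ge 2$, and once $\bar{\mm}^s = x\bar{\mm}^{s-1}$ an immediate induction gives $\bar{\mm}^n = x\bar{\mm}^{n-1}$, hence $H_G(n) = e$, for all $n \ge s$. The decisive step is to prove
\[
\lm\bigl(\bar{\mm}^{n+1}/x\bar{\mm}^n\bigr) \;=\; 1 \qquad (1 \le n \le s-1),
\]
which yields $H_G(n) = e-1$ throughout that range and so $(1-z)P_{G(\bar{\mm})}(z) = 1 + (e-2)z + z^s$, exactly the predicted numerator. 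The bound $s \le e-1 = \mu(\mm)-d+1$ then comes from comparing the chain $\bar{\mm} \supsetneq \bar{\mm}^2 + x\bar R \supsetneq \cdots \supsetneq \bar{\mm}^s + x\bar R = x\bar R$ against $\lm(\bar{\mm}/x\bar R) = e-1$.

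To lift to arbitrary $d$, I would apply the Sally machine inductively to $x_1,\dots,x_{d-1}$ to conclude that their initial forms form a regular sequence in $G(\mm)$; this already delivers $\depth G(\mm) \ge d-1$, and by Valabrega--Valla $(1-z)^{d-1}P_{G(\mm)}(z) = P_{G(\bar{\mm})}(z)$, producing~(2). The converse (2) $\Rightarrow$ (1) is immediate: the coefficient of $z$ in $(1-z)^d P_{G(\mm)}(z)$ equals $\mu(\mm) - d$, and matching with the proposed numerator forces $\mu(\mm) = e_0(\mm) + d - 2$. For part~(3), $\depth G(\mm) \ge d-1$ is in hand; Cohen--Macaulayness of $G(\mm)$ is, via Huneke--Ooishi (Theorem~2.3), equivalent to $J\mm = \mm^2$, which after quotienting by $x_1,\dots,x_{d-1}$ becomes $\bar{\mm}^2 = x\bar{\mm}$, i.e.\ $s = 2$.

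The chief obstacle is the inductive claim $\lm(\bar{\mm}^{n+1}/x\bar{\mm}^n) = 1$ for $1 \le n \le s-1$ in dimension one. The hypothesis $\mu(\bar{\mm}) = e - 1$ is a length-$1$ defect at the bottom (exactly one fewer generator than in the minimal-multiplicity case of Theorem~2.6), and the content of the claim is that this unit of defect is transported cleanly through each successive power of $\bar{\mm}$, neither dissipating nor accumulating, until $\bar{\mm}^s = x\bar{\mm}^{s-1}$. Propagating it via the graded $\bar R/\bar{\mm}$-module structure on $\bar{\mm}^n/\bar{\mm}^{n+1}$ is the new input of Rossi--Valla; every other ingredient (Sally machine, Valabrega--Valla, and the Huneke--Ooishi criterion) is already available from earlier sections of the paper.
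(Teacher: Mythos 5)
Your reduction to dimension one and your identification of the needed external inputs form the right skeleton, but the argument has two genuine gaps, and its decisive step is deferred rather than proved. First, the claim you yourself flag as the ``chief obstacle'' --- that $\lm(\bar\mm^{n+1}/x\bar\mm^{n})=1$ for every $n$ up to the reduction number --- is the entire content of Sally's conjecture and cannot be waved through: your length count gives $\lm(\bar\mm^{2}/x\bar\mm)=1$ at the bottom, but nothing you write shows that this defect of one persists (rather than jumping back up to some larger value) in higher powers of $\bar\mm$; this propagation is precisely what kept the problem open from 1983 to 1996 and is where Rossi--Valla (and, by a different filtration argument, Wang) do all the work. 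Second, the lift to dimension $d$ does not go through from the one-dimensional computation: the Sally machine requires $\depth G(I/(x))>0$, and in the interesting case the one-dimensional ring $\bar R$ has $\depth G(\bar\mm)=0$ (this happens exactly when $s>2$), so the induction cannot even start; moreover the Valabrega--Valla identity $(1-z)^{d-1}P_{G(\mm)}(z)=P_{G(\bar\mm)}(z)$ presupposes the very regular sequence in $G(\mm)$ you are trying to produce. The route consistent with this paper is instead to establish $e_1(\mm)=\sum_{n\ge 1}\lm(\mm^{n}/J\mm^{n-1})$ together with $\lm(\mm^{n+1}/J\mm^{n})\le 1$ in dimension $d$ itself (or at least in dimension two) and then invoke Theorem 6.1(3) to obtain $\depth G(\mm)\ge d-1$; only after that does the Hilbert series descend to dimension one.

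There are also two concrete errors. Your $s:=\min\{n\ge 1:\bar\mm^{n}=x\bar\mm^{n-1}\}$ is off by one from the theorem's $s$ (which is the reduction number $r_{(x)}(\bar\mm)$): with your definition, $H_G(1)=e-1<e$ forces $s\ge 3$; your asserted equality $\lm(\bar\mm^{n+1}/x\bar\mm^{n})=1$ at $n=s-1$ contradicts the definition of $s$; and the numerator you obtain is $1+(e-2)z+z^{s-1}$, not $1+(e-2)z+z^{s}$. More seriously, part (3) cannot rest on Huneke--Ooishi: Cohen--Macaulayness of $G(\mm)$ is not equivalent to $J\mm=\mm^{2}$, and under hypothesis (1) one has $\lm(\mm^{2}/J\mm)=1\neq 0$, so $J\mm=\mm^{2}$ \emph{never} holds here even though $G(\mm)$ is Cohen--Macaulay when $s=2$. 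The correct tools are the Huckaba--Marley criterion of Theorem 6.1(2), or Sally's result that $r(\mm)\le 2$ implies $G(\mm)$ Cohen--Macaulay for the direction $s=2\Rightarrow$ CM, combined with the observation that no standard graded Artinian algebra has $h$-vector $(1,e-2,0,\dots,0,1)$, which rules out Cohen--Macaulayness when $s\ge 3$. (Note also that the paper states this theorem as a survey item citing Rossi--Valla and Wang without proof, so there is no internal argument to lean on.)
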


Rossi \cite{r} extended the above theorem partially to all $\mm$-primary ideals.
The condition $e_0(\mm)=\mu(\mm)-d+2$ has an analogue for $\mm$-primary ideals.
It is easy to see that for an $\mm$-primary ideal $I$ with a minimal reduction $J,$ in a Cohen-Macaulay local 
ring $(R,\mm)$ with infinite residue field,  $\lm(I^2/JI)=1$ if and only if $e_0(I)=\lm (I/I^2)+(1-d)\lm(R/I)+1.$ Rossi proved that If $\lm(I^2/JI)=1$ then
$G(I) \geq d-1$ \cite{r}.

In \cite{e}, Elias generalized the condition $\lm(I^2/JI)=1$ further 
and presented a unified  treatment of several  theorems.  Let $J$ be a 
minimal reduction of an $\mm$-primary ideal $I$ in $R.$ We say that $I$ and $J$ satisfy the $nth$ Valabrega-Valla condition $VV_n$ if $J\cap I^n=JI^{n-1}.$

\begin{Theorem}[Elias, 1999] Let $R$ be a Cohen-Macaulay local ring of dimension $d \geq 1.$
Let $I$ be an $\mm$-primary ideal of $R$ and $J$ be a minimal reduction of $I.$
Let $t$ be positive integer such that \\
$(1)$~~  $I$ and $J$ satisfy $VV_n$ for $n=0,1,\ldots,t,$ \\
$(2)$~~  $\lm (I^{t+1}/JI^t)=\delta \leq \min\{1,d-1\}.$\\
Then $d-\delta \leq \depth G(I)\leq d.$ If\; $t \geq e_0(I)-1,$ then $G(I)$ is Cohen-Macaulay. 
\end{Theorem}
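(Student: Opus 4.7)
Since $\delta\leq \min\{1,d-1\}$ forces $\delta\in\{0,1\}$, and $\depth G(I)\leq \dim G(I)=d$ is automatic, the substantive content is $\depth G(I)\geq d-\delta$. The plan is to split on the two values of $\delta$ and, in the harder case $\delta=1$, induct on $d$ using the Valabrega--Valla criterion (which characterizes Cohen--Macaulayness of $G(I)$ by $J\cap I^n=JI^{n-1}$ for every $n$, when $J$ is generated by a superficial sequence) together with the Sally machine developed in Section~5.

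For $\delta=0$ the hypothesis $\lm(I^{t+1}/JI^t)=0$ gives $I^{t+1}=JI^t$, so $I^n\subseteq J$ and hence $J\cap I^n=I^n=JI^{n-1}$ automatically for every $n>t$. Combined with hypothesis~(1) this yields $J\cap I^n=JI^{n-1}$ for all $n\geq 1$, so choosing $J$ to be generated by a superficial sequence (possible because the residue field is infinite), the Valabrega--Valla theorem yields that $G(I)$ is Cohen--Macaulay, i.e.\ $\depth G(I)=d=d-\delta$.

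For $\delta=1$ (so necessarily $d\geq 2$) I would induct on $d$. Pick $x\in J$ superficial for $I$, extended to a superficial sequence generating $J$, and set $R'=R/xR$, $I'=IR'$, $J'=JR'$. Since $x\in J$, the modular law gives
\[
J'\cap (I')^n \;=\; \frac{(J\cap I^n)+xR}{xR}\;=\;\frac{JI^{n-1}+xR}{xR}\;=\;J'(I')^{n-1}\qquad(n\leq t),
\]
so hypothesis~(1) transfers. A parallel modular computation shows $\delta':=\lm((I')^{t+1}/J'(I')^t)\leq 1$. When $d\geq 3$, the inductive hypothesis applies to $(R',I',J')$ (since $\delta'\leq 1\leq d-2$) and gives $\depth G(I')\geq (d-1)-1=d-2\geq 1$; the Sally machine then promotes $x^*\in G(I)_1$ to a nonzerodivisor on $G(I)$ with $G(I)/x^*G(I)\cong G(I')$, yielding $\depth G(I)\geq d-1$. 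The borderline case $d=2$ requires separate treatment: the induction in dimension one would demand $\delta'\leq 0$, so I would argue directly that $(x)\cap I^n=xI^{n-1}$ for all $n$, using superficiality, $VV_n$ for $n\leq t$, and the observation that $\lm(I^{t+1}/JI^t)=1$ forces either $I^{t+1}\cap xR\subseteq xI^t$ or a controlled collapse of $\delta'$ to $0$. Either outcome gives $\depth G(I)\geq 1=d-\delta$.

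For the final assertion, suppose $t\geq e_0(I)-1$. By Huckaba--Marley (Theorem~2.5), $e_1(I)\geq \sum_{n\geq 1}\lm(I^n/(J\cap I^n))$ with equality if and only if $G(I)$ is Cohen--Macaulay; the $VV_n$ hypotheses convert the first $t$ summands into $\sum_{n\leq t}\lm(I^n/JI^{n-1})$. Combining this with Huckaba's upper bound (Theorem~2.4) $e_1(I)\leq\sum_{n\geq 1}\lm(I^n/JI^{n-1})$ and a reduction-number estimate controlling the tail $\sum_{n>t}\lm(I^n/JI^{n-1})$ in terms of $e_0(I)$, the hypothesis $t\geq e_0(I)-1$ squeezes the Huckaba--Marley inequality to equality, forcing $G(I)$ to be Cohen--Macaulay. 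The main obstacle is the $d=2$, $\delta=1$ case in the induction, which sits at the boundary of the hypothesis and demands a hand-made proof that $x^*$ is $G(I)$-regular; the secondary difficulty is the length bookkeeping in the last step, where the value $t\geq e_0(I)-1$ must be exactly what is needed to saturate the Huckaba--Marley inequality.
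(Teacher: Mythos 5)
First, a point of comparison: the paper does not actually prove Elias's theorem --- it is stated in the survey section with a citation to \cite{e}, and no proof is given in the text --- so your attempt can only be judged on its own terms. The outer architecture you propose (note $\delta\in\{0,1\}$, handle $\delta=0$ via Valabrega--Valla, and for $\delta=1$ induct on $d$ by passing to $R/xR$ for $x$ superficial and climbing back with the Sally machine) is a reasonable plan, and several pieces are correct and complete: the $\delta=0$ case, the modular-law transfer $J'\cap (I')^n=((J\cap I^n)+xR)/xR=J'(I')^{n-1}$ of the $VV_n$ conditions, the inequality $\delta'\leq\delta$, and the inductive step for $d\geq 3$.

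The genuine gap is that the entire content of the theorem is concentrated precisely where you defer it: the case $d=2$, $\delta=1$, i.e.\ showing $\depth G(I)\geq 1$ when $\lm(I^{t+1}/JI^t)=1$ in dimension two. For $I=\mm$ and $t=1$ this is exactly Sally's conjecture, open for more than a decade and settled only in \cite{rv} and \cite{w1}; Elias's theorem and Rossi's result \cite{r} are generalizations of it, so no soft argument can dispose of it. Your proposed dichotomy --- that $\lm(I^{t+1}/JI^t)=1$ ``forces either $I^{t+1}\cap xR\subseteq xI^t$ or a controlled collapse of $\delta'$ to $0$'' --- is not substantiated and is not evidently true: superficiality together with $VV_n$ for $n\leq t$ does not by itself yield $(x)\cap I^n=xI^{n-1}$ for all $n$, and all known proofs at this point require a genuinely new estimate (for instance establishing the Huckaba equality $e_1(I)=\sum_{n\geq 1}\lm(I^n/JI^{n-1})$ by a separate length computation, which by the theorem proved in Section 6 is equivalent to $\depth G(I)\geq d-1$). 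The final assertion is likewise only gestured at: the ``reduction-number estimate controlling the tail'' must be supplied, the standard route being that under the $VV_n$ conditions the lengths $\lm((I^n+J)/J)$ strictly decrease (by Nakayama) from at most $e_0(I)-1$ while nonzero, which is what bounds the number of nonvanishing summands and lets $t\geq e_0(I)-1$ force equality in the Huckaba--Marley inequality. As written, the proposal is a correct outline whose two hardest steps are missing.
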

\medskip
\noindent {\bf Sally Modules and depth of $G(I)$}
\medskip

Finally, we discuss the important notion of Sally modules introduced by Vasconcelos in \cite{vas}. Let $R$ be a Noetherian ring, and let $I$ be an ideal with a reduction $J.$ The {\em Rees algebra of an ideal $I$}, $\Rees(I),$ is defined to be the graded 
$R$-algebra $\bigoplus_{n=0}^{\infty} I^nt^n$ where $t$ is an indeterminate.  The Sally module $S_J(I)$ 
of $I$ with respect to $J$ is the $\Rees(J)$-module defined in the  exact sequence 
$$
0 \longrightarrow I\Rees(J) \longrightarrow I \Rees(I) \longrightarrow S_J(I):=\bigoplus _{n=0}^{\infty} I^{n+1}/IJ^n \longrightarrow 0.$$ 

We summarize some basic properties of Sally modules found in \cite{vas}.

\begin{Theorem}
Let $(R,\mm)$ be a $d$-dimensional Cohen-Macaulay local ring with infinite residue field. Let $I$ be and $\mm$-primary ideal and let  $J$ be a minimal reduction of 
$I.$  Then \\
$(1)$ If $S_J(I)\neq 0 $ then its  associated primes have height 1. In particular, the its dimension as an $\Rees(J)$-module is $d.$ \\
$(2)$ Let $S=S_J(I)=\bigoplus_{n=0}^{\infty}S_n.$ Then For large $n,$
$$H_I(n)=e_0(I)\binom{n+d-1}{d}+(\lm(R/I)-e_0(I))\binom{n+d-2}{d-1}-\lm(S_{n-1}).$$
Hence if $S\not=0$ then $\lm(S_n)$ is a polynomial function of degree $d-1.$ Let
$s_i$ for $i=0,1, \ldots,d-1$ be the Hilbert coefficients of $S.$ Then
$e_1(I)=e_0(I)-\lm(R/I)+s_0$ and for $i \geq 1,$ $e_{i+1}(I)=s_i.$ 
\end{Theorem}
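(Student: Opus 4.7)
The plan is to tackle (2) first by an explicit length computation and then to deduce (1) from the resulting polynomial growth of $\lm(S_n)$ together with the Cohen-Macaulay property of $\Rees(J)$.

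For part (2), I would begin by rewriting
$$\lm(S_{n-1}) = \lm(I^n/IJ^{n-1}) = \lm(R/IJ^{n-1}) - H_I(n),$$
reducing the task to a closed form for $\lm(R/IJ^{n-1})$. Since $J$ is a minimal reduction in the $d$-dimensional Cohen-Macaulay ring $R$, it is generated by a regular sequence of length $d$, so $G(J) \iso (R/J)[T_1,\dots,T_d]$ and each $J^k/J^{k+1}$ is $R/J$-free of rank $\binom{k+d-1}{d-1}$. The identification $I \cdot G(J) \iso (I/J)\otimes_{R/J} G(J)$ then pins
$$\lm(IJ^{n-1}/J^n) = (e_0(I)-\lm(R/I))\binom{n+d-2}{d-1},$$
using $e_0(J) = e_0(I) = \lm(R/J)$. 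Telescoping yields $\lm(R/J^n) = e_0(I)\binom{n+d-1}{d}$, and subtracting the previous display produces the stated identity for $H_I(n)$. Writing $\lm(S_{n-1})$ in the binomial basis with coefficients $s_0,\dots,s_{d-1}$ and matching against the Hilbert polynomial expansion of $H_I(n)$ then yields $e_1(I) = e_0(I)-\lm(R/I)+s_0$ and $e_{i+1}(I)=s_i$ for $i \geq 1$.

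For part (1), I first note that $J$ being a reduction makes $\Rees(I)$ a finitely generated $\Rees(J)$-module (since $I^{n+1}=JI^n$ for $n$ large), so $S_J(I)$ is also a finitely generated graded $\Rees(J)$-module. Since $J$ is generated by a regular sequence of length $d$ in $R$, the Rees algebra $\Rees(J)$ is Cohen-Macaulay of dimension $d+1$. The polynomial extracted in (2) has degree exactly $d-1$ whenever $S_J(I)\neq 0$ (Northcott's inequality forces the leading coefficient $s_0 = e_1(I)-e_0(I)+\lm(R/I)$ to be strictly positive, with equality characterizing $JI = I^2$, which is precisely when $S_J(I)=0$). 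Hence $\dim_{\Rees(J)} S_J(I) = d$, and every minimal prime of $\Ann(S_J(I))$ has height $1$.

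The main obstacle is ruling out embedded associated primes. My approach would be to exploit the defining sequence $0 \to I\Rees(J) \to I\Rees(I) \to S_J(I) \to 0$: any $R$-regular element of $I$ (which exists because $R$ is Cohen-Macaulay of dimension $d \geq 1$) acts as a $\Rees(J)$-regular element on both $I\Rees(J)$ and $I\Rees(I)$, and combining it with a suitably chosen element of positive $t$-degree from $\Rees(J)$ gives a length-two regular sequence on each of these two modules. A depth chase along the exact sequence then forces $\depth S_J(I) \geq 1$, which together with $\dim S_J(I) = d$ in the Cohen-Macaulay ring $\Rees(J)$ pins every associated prime of $S_J(I)$ at height exactly $1$.
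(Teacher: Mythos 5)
The paper itself gives no proof of this theorem --- it is stated as a summary of results from Vasconcelos's paper \cite{vas} --- so there is nothing internal to compare against; I will assess your argument on its own terms. Your part (2) is correct and is essentially the standard computation: $\lm(S_{n-1})=\lm(R/IJ^{n-1})-H_I(n)$, the freeness of $J^{n-1}/J^n$ over $R/J$ gives $\lm(IJ^{n-1}/J^n)=(e_0(I)-\lm(R/I))\binom{n+d-2}{d-1}$, and matching binomial coefficients yields the relations among the $e_i$ and $s_i$; your use of Northcott and Huneke--Ooishi to see that $s_0>0$ exactly when $S\neq 0$ (via $S=0\iff I^2=JI$) is also fine.

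Part (1), however, has a genuine gap. You conclude from $\dim S=d$ that ``every minimal prime of $\Ann(S)$ has height $1$,'' but $\dim S=d$ only says that \emph{some} minimal prime has height $1$; lower-dimensional components are not excluded by a dimension count. More seriously, the final inference --- $\depth S\geq 1$ together with $\dim S=d$ in the Cohen--Macaulay ring $\Rees(J)$ ``pins every associated prime at height $1$'' --- is false: over $k[x,y,z]$ the module $k[x,y,z]/(x)\oplus k[x,y,z]/(x,y)$ has depth $1$ and dimension $2$, yet $(x,y)$ is an associated prime of height $2$. A global depth chase at the maximal homogeneous ideal $\Mm$ only removes $\Mm$ itself from $\Ass(S)$; it says nothing about graded primes strictly between height $1$ and $\hei\Mm$. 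The missing ingredients are: (i) every $P\in\Ass(S)$ contains $\mm\Rees(J)$, because $P$ is the annihilator of a homogeneous element lying in some $S_n=I^{n+1}/IJ^n$, which has finite length over $R$, so $\mm^k\subseteq P$ for some $k$; and $\mm\Rees(J)$ is a prime of height $1$ since $\Rees(J)/\mm\Rees(J)\iso F(J)\iso (R/\mm)[T_1,\dots,T_d]$ has dimension $d$; (ii) your depth chase must be carried out \emph{locally} at each such $P$ with $\hei P\geq 2$: there $(I\Rees(J))_P$ is maximal Cohen--Macaulay (from $0\to I\Rees(J)\to\Rees(J)\to (R/I)[T_1,\dots,T_d]\to 0$), hence of depth $\geq 2$, while $(I\Rees(I))_P$ has depth $\geq 1$ because $P\cap R=\mm$ contains an $R$-regular element and $I\Rees(I)\subseteq R[t]$ is $R$-torsion-free; the depth lemma then gives $\depth S_P\geq 1$, so $P\notin\Ass(S)$, forcing $\Ass(S)=\{\mm\Rees(J)\}$. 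With that localization your strategy does go through, but as written the argument does not establish the statement.
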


Vaz Pinto studied the relationship of  the Sally module with the depth  
of $G(I).$ She proved the following interesting result:

\begin{Theorem} Let $(R,\mm)$ be a Cohen-Macaulay local ring with infinite residue field having  positive  dimension $d.$ Let $J$ be a minimal reduction of an $\mm$-primary ideal $I.$ Then the following conditions are equivalent:\\
$(1)$\;\; $s_0=\sum_{n=1}^{r_J(I)}\lm(I^{n+1}/JI^n).$\\
$(2)$\;\; $S$ is Cohen-Macaulay.\\
$(3)$\;\; $\depth G(I) \geq d-1.$
\end{Theorem}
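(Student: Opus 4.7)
The plan is to prove the two equivalences $(1) \Leftrightarrow (3)$ and $(2) \Leftrightarrow (3)$ separately, with the first being a direct consequence of earlier results in the paper and the second requiring an inductive argument via the Sally machine.

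For $(1) \Leftrightarrow (3)$, I would combine Theorem~2.12(2) with Huckaba's theorem (Theorem~2.4). Since $J$ is a minimal reduction in a Cohen-Macaulay ring, $J$ is generated by a regular sequence of length $d$, so $\lm(R/J) = e_0(I)$ and $\lm(I/J) = e_0(I) - \lm(R/I)$. Theorem~2.12(2) then reads $s_0 = e_1(I) - \lm(I/J)$. Huckaba's theorem states
\[
e_1(I) \;\leq\; \sum_{n \geq 1} \lm(I^n/JI^{n-1}) \;=\; \lm(I/J) + \sum_{n \geq 1} \lm(I^{n+1}/JI^n),
\]
with equality if and only if $\depth G(I) \geq d-1$. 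Because $I^{n+1} = JI^n$ for $n \geq r_J(I)$, the sum truncates at $n = r_J(I)$. Subtracting $\lm(I/J)$ from both sides exactly converts this into condition~(1), yielding $(1) \Leftrightarrow (3)$.

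For $(2) \Leftrightarrow (3)$, I would induct on $d$. In the base case $d=1$ both conditions are automatic: (3) is trivial, and for (2), writing $J = (x)$, the element $x^\ast = xt \in \Rees(J)_1$ is regular on $S$, since if $xa \in IJ^n = x^n I$ then $a \in x^{n-1}I = IJ^{n-1}$; hence $\depth S = 1 = \dim S$. For $d \geq 2$, the case $S=0$ is handled by Theorem~2.12(2) together with Huneke-Ooishi (Theorem~2.3), so I may assume $S \neq 0$. Pick $x \in J$ superficial for $I$ and set $\bar R = R/(x)$, $\bar I = I\bar R$, $\bar J = J\bar R$. The engine of the inductive step is the isomorphism
\[
S_{\bar J}(\bar I) \;\cong\; S_J(I)/x^\ast S_J(I),
\]
combined with the Sally machine, which says $\depth G(I) \geq d-1 \Leftrightarrow \depth G(\bar I) \geq d-2$. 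For $(2) \Rightarrow (3)$: if $S$ is CM then (since by Theorem~2.12(1) the associated primes of $S$ have height $1$) a generic choice of superficial $x$ makes $x^\ast$ regular on $S$, so $S/x^\ast S \cong S_{\bar J}(\bar I)$ is CM of dimension $d-1$, and induction plus Sally machine gives (3). For $(3) \Rightarrow (2)$: Sally machine yields $\depth G(\bar I) \geq d-2$, induction gives $S_{\bar J}(\bar I)$ CM, and one verifies $x^\ast$ is regular on $S$ (again using Theorem~2.12(1) and prime avoidance), so $\depth S \geq \depth(S/x^\ast S) + 1 = d$.

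The main obstacle will be justifying the isomorphism $S_{\bar J}(\bar I) \cong S_J(I)/x^\ast S_J(I)$ and ensuring $x^\ast = xt$ is regular on $S_J(I)$. The isomorphism rests on controlling $I^{n+1} \cap (x) = x I^n$, which holds eventually for a superficial element $x$ in the Cohen-Macaulay setting (and can be arranged to hold in all positive degrees by replacing $x$ with a sufficiently deep superficial element); the regularity of $x^\ast$ uses Theorem~2.12(1), which confines $\Ass(S)$ to height-one primes of $\Rees(J)$, so that a generic superficial $x$ chosen outside $(\Rees(J))_+^{\,(2)}$ and outside the contractions of these associated primes lifts to a non-zerodivisor $x^\ast$ on $S$.
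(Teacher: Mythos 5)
The paper gives no proof of this theorem to compare against: it is quoted in the survey section and attributed to Vaz Pinto \cite{vaz} (see also \cite{cpv}), so your argument has to stand on its own. Your proof of $(1)\Leftrightarrow(3)$ does stand: by Theorem 2.12(2), $s_0=e_1(I)-\lm(I/J)$, and since $\sum_{n\geq 1}\lm(I^n/JI^{n-1})=\lm(I/J)+\sum_{n\geq 1}\lm(I^{n+1}/JI^n)$ with all terms vanishing for $n\geq r_J(I)$, condition (1) is exactly the equality case of Huckaba's inequality (Theorem 2.4, proved in Section 6), which is equivalent to $\depth G(I)\geq d-1$. That part is complete.

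The gap is in $(2)\Leftrightarrow(3)$, precisely at the step you flag as the main obstacle. The degree-$n$ piece of $S/x^{\ast}S$ is $I^{n+1}/(xI^n+IJ^n)$ while that of $S_{\bar J}(\bar I)$ is $I^{n+1}/(I^{n+1}\cap (x)+IJ^n)$, so the isomorphism needs $I^{n+1}\cap (x)\subseteq xI^n+IJ^n$ for \emph{every} $n$; the natural sufficient condition $I^{n+1}\cap (x)=xI^n$ for all $n$ is, by the Valabrega--Valla criterion at the end of Section 5, equivalent to $x^{\ast}$ being a $G(I)$-regular element, i.e.\ to $\depth G(I)>0$ --- essentially the conclusion at stake. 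Your proposed fix of choosing a ``sufficiently deep'' superficial element cannot work: if $\depth G(I)=0$, no superficial element satisfies this in all degrees. What holds unconditionally is only a graded surjection $S/x^{\ast}S\twoheadrightarrow S_{\bar J}(\bar I)$ whose kernel vanishes in large degrees (by superficiality) and hence has finite length. The argument must therefore be repaired asymmetrically: for $(2)\Rightarrow(3)$, once $x^{\ast}=xt$ is known to be $S$-regular, Cohen--Macaulayness of $S$ gives $\depth (S/x^{\ast}S)\geq d-1\geq 1$, so $S/x^{\ast}S$ has no finite-length submodule and the kernel is forced to vanish; for $(3)\Rightarrow(2)$, the hypothesis $\depth G(I)\geq d-1\geq 1$ lets you choose $x$ with $x^{\ast}$ $G(I)$-regular, and then Valabrega--Valla yields the isomorphism directly. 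You also need to patch the inductive step at $d=2$: the Sally machine requires $\depth G(\bar I)>0$, which your inductive hypothesis supplies only for $d\geq 3$, so passing from ``$S_{\bar J}(\bar I)$ Cohen--Macaulay of dimension $1$'' to ``$\depth G(I)\geq 1$'' when $d=2$ needs a separate argument --- the cheapest being to route it through the already-proved equivalence $(1)\Leftrightarrow(3)$ by tracking $s_0$ under the surjection above.
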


\section{\bf Reductions of ideals}

\begin{Definition}
 Let $R$ be a Noetherian ring, $J\subseteq I$ be ideals of $R.$ If $JI^n=I^{n+1}$ for some $n$  then $J$ is called a reduction of $I$. The reduction number  $r_J(I)$ of $I$ with respect to $J$ is the smallest $n$  such that $JI^n=I^{n+1}$.
The reduction number $r(I)$ of $I$ is the smallest among the reduction numbers 
$r_J(I)$ where $J$ varies over all minimal reductions of $I.$
\end{Definition}

The notion of reduction of an ideal was introduced by Northcott and Rees in the paper \cite{nr}. This paper, now a classic, introduced several other important concepts such as fiber 
cone of an ideal, analytic spread, analytically independent elements etc. Reductions
have  played a crucial role in understanding Hilbert coefficients, Rees algebras, fiber cones and associated graded rings of ideals. In this section we prove their basic properties to be used in the later sections. 
\begin{Proposition}
 Let $J\subseteq I$ be ideals of a Noetherian ring $R$. Then $J$ is reduction of $I$ if and only if $R[It]=\bigoplus_{n=0}^{\infty} I^nt^n$ is a finite $R[Jt]$-module.
\end{Proposition}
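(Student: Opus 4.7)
The plan is to prove both implications by working with the graded structure of $\Rees(I) = R[It] = \bigoplus_{n \geq 0} I^n t^n$ as an $R[Jt]$-module.

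For the forward direction, I would first promote the single equality $JI^n = I^{n+1}$ to the chain $J^k I^n = I^{n+k}$ for all $k \geq 0$, by an easy induction on $k$. This says that in degrees above $n$, every homogeneous piece of $R[It]$ is $R[Jt]$-generated from the piece in degree $n$. Consequently, if $a_1, \dots, a_r$ is a set of generators for the ideal $I^n$ over $R$, then a finite set of elements of $R[It]$ in degrees $0, 1, \dots, n$ (namely $R$-generators for each $I^j$ with $j \le n$) generates $R[It]$ as an $R[Jt]$-module.

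For the backward direction, I would use that a finitely generated module over a graded ring can be assumed to have homogeneous generators (here the grading is by the natural $\mathbb{N}$-grading of $\Rees(I)$ and $\Rees(J)$). So pick homogeneous generators $a_1 t^{n_1}, \dots, a_k t^{n_k}$ of $R[It]$ over $R[Jt]$, with $a_i \in I^{n_i}$. Let $n = \max_i n_i$. Then for any $m \ge n$, the degree-$m$ part of $R[It]$, which is $I^m t^m$, equals
$$I^m = \sum_{i=1}^{k} J^{m - n_i} a_i.$$
Because $n_i \le n$, we can factor $J^{m-n_i} = J^{m-n} \cdot J^{n-n_i}$, and then $J^{n-n_i} a_i \subseteq I^{n-n_i} \cdot I^{n_i} = I^n$. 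This yields $I^m \subseteq J^{m-n} I^n$ for every $m \ge n$. Taking $m = n+1$ gives $I^{n+1} \subseteq JI^n$, and the reverse inclusion $JI^n \subseteq I^{n+1}$ is immediate from $J \subseteq I$. Hence $JI^n = I^{n+1}$, which is exactly the assertion that $J$ is a reduction of $I$.

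The only subtle point is the homogeneity of the generators in the backward direction (ensuring we can choose generators that sit in pure degrees so that the degree-by-degree comparison in $I^m = \sum J^{m-n_i}a_i$ is valid), together with the choice of $n = \max n_i$ to ensure the factorization $J^{m - n_i} = J^{m-n} J^{n - n_i}$ makes sense with nonnegative exponents. Beyond that, the argument is purely a matter of bookkeeping with the graded structure.
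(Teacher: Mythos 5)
Your proof is correct and follows essentially the same route as the paper: both directions rest on promoting $JI^n=I^{n+1}$ to $J^kI^n=I^{n+k}$ to get finitely many module generators in degrees $\le n$, and, conversely, on choosing homogeneous generators and comparing the graded components in degree just above the top generating degree to recover $JI^n=I^{n+1}$ via $J^{n+1-j}I^j\subseteq JI^n$. The paper merely packages the generators as the sum $R[Jt]+R[Jt](It)+\cdots+R[Jt](I^rt^r)$ rather than indexing them by $n_i$, which is a presentational difference only.
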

\begin{proof} 
Let $J$ be a reduction of $I$. Then
 $(J^kt^k)(I^nt^n)=I^{n+k}t^{n+k}$ for all $k \geq 1$ and for $n \geq r$ for some $r$. Therefore 
\begin{eqnarray}
\label{2.2}
R[It]= R[Jt]+R[Jt](It)+\cdots+R[Jt](I^r t^r). 
\end{eqnarray}

Hence $R[It]$ is a finite $R[Jt]$-module. Conversely, let $R[It]$ be a finite $R[Jt]$-module. Since $R[It]$ is a graded $R[Jt]$-module, there is a  finite set of homogeneus generators of $R[It]$ as an $R[Jt]$-module. Thus (\ref{2.2}) follows for some $r.$  Equate the components of degree $r+1$ on both sides of (\ref{2.2}) to get
$$
I^{r+1}t^{r+1}=J^{r+1}t^{r+1}+J^rIt^{r+1}+\cdots+JI^rt^{r+1}.
$$
Thus $JI^r=I^{r+1}$. Hence $J$ is a reduction of $I.$
\end{proof}
\begin{Corollary}
Let $K \subseteq J \subseteq I$ be ideals of a Noetherian ring $R. $ Then $K$ is a reduction of $I$ if and only if $K$ is a reduction of $J$ and $J$ is a reduction of $I$.
\end{Corollary}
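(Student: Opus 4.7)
The plan is to reduce everything to statements about Rees-algebra module finiteness via the preceding Proposition, which characterizes reductions in exactly this way. Concretely, I will work entirely with the equivalent formulation: $A \subseteq B$ is a reduction if and only if $R[Bt]$ is a finitely generated $R[At]$-module.

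For the forward direction, assume $K$ is a reduction of $I$, so that $R[It]$ is a finite $R[Kt]$-module. To see that $J$ is a reduction of $I$, note that $R[Kt] \subseteq R[Jt] \subseteq R[It]$; since $R[It]$ is already generated over the smaller ring $R[Kt]$ by finitely many elements, the same finite set generates $R[It]$ over $R[Jt]$. To see that $K$ is a reduction of $J$, observe that $R[Jt]$ is an $R[Kt]$-submodule of $R[It]$. Because $R[Kt]$ is a finitely generated $R$-algebra and $R$ is Noetherian, the Hilbert basis theorem gives that $R[Kt]$ is itself Noetherian. Hence any submodule of the finite $R[Kt]$-module $R[It]$ is again finite, so $R[Jt]$ is a finite $R[Kt]$-module.

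For the reverse direction, assume $R[Jt]$ is a finite $R[Kt]$-module and $R[It]$ is a finite $R[Jt]$-module. Then by transitivity of finiteness for module extensions (products of finite sets of generators), $R[It]$ is finite over $R[Kt]$, and therefore $K$ is a reduction of $I$.

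There is no real obstacle here; the only point to be careful about is invoking the Hilbert basis theorem to conclude that $R[Kt]$ is Noetherian, which is what legitimizes passing from a finitely generated module to its submodule $R[Jt]$ in the forward direction. Everything else is formal manipulation of the equivalence furnished by the Proposition.
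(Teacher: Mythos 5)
Your argument is correct and follows exactly the same route as the paper's proof: both directions are reduced to finiteness of Rees algebras as modules via the preceding Proposition, with the forward direction using that $R[Jt]$ is a submodule of the finite $R[Kt]$-module $R[It]$ (hence finite, since $R[Kt]$ is Noetherian) and the converse using transitivity of module finiteness. You merely make explicit the Hilbert basis theorem step that the paper leaves implicit.
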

\begin {proof} Let $K$ be a reduction of $I$. Then $R[It]$ is a  finite $R[Kt]$-module. Hence $R[Jt]$ is a finite $R[Kt]$-module and $R[It]$ is a finite  $R[Jt]$-module. Hence $K$ is a reduction of $J$ and $J$  is a reduction of $I$. 
Conversely let $K$ be a reduction of $J$ and $J$ be a reduction $I.$ Then $R[Jt]$ is a finite $R[Kt]$-module and $R[It]$ is a finite $R[Jt]$-module. Hence $R[It]$ is a finite $R[Kt]$-module. Thus $K$ is a reduction of $I$.
\end{proof}
\begin{Proposition}
 Let $(R,\mm)$ be a local ring and let  $I$ be an ideal of  $R$. Then an ideal $J \subseteq I$ is a reduction of $I$ if and only if $J +\mm I$ is a reduction of $I$.
\end{Proposition}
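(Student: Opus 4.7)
The plan is to prove the two directions separately, with the forward direction being essentially trivial and the backward direction resting on Nakayama's lemma.

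For the forward direction, suppose $J$ is a reduction of $I$, so that $JI^n = I^{n+1}$ for some $n$. Since $J \subseteq J + \mm I \subseteq I$, I would write
\[
I^{n+1} = JI^n \subseteq (J + \mm I)I^n \subseteq I^{n+1},
\]
so equality holds throughout and $J + \mm I$ is a reduction of $I$.

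For the converse, suppose $(J + \mm I)I^n = I^{n+1}$ for some $n$. Expanding, this gives
\[
I^{n+1} = JI^n + \mm I^{n+1}.
\]
Passing to the quotient, the finitely generated $R$-module $I^{n+1}/JI^n$ satisfies $I^{n+1}/JI^n = \mm(I^{n+1}/JI^n)$. By Nakayama's lemma (applicable because $\mm$ is the maximal ideal of the local ring $R$ and $I^{n+1}/JI^n$ is finitely generated, $I$ being finitely generated in the ambient Noetherian setting), this module is zero, i.e.\ $I^{n+1} = JI^n$, so $J$ is a reduction of $I$.

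Neither direction presents a real obstacle; the only subtlety worth noting is that the backward implication genuinely uses the hypothesis that $R$ is local (through Nakayama) and that $I$ is finitely generated, both of which are in force under the running assumptions of the paper.
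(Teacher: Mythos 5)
Your proof is correct and follows essentially the same route as the paper: the converse is the identical Nakayama's lemma argument applied to $JI^n+\mm I^{n+1}=I^{n+1}$, merely spelled out on the quotient module. The only cosmetic difference is in the forward direction, where the paper invokes its earlier corollary on sandwiched reductions ($J\subseteq J+\mm I\subseteq I$) while you verify the containment $I^{n+1}=JI^n\subseteq (J+\mm I)I^n\subseteq I^{n+1}$ directly; both are immediate.
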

\begin{proof}
 As $J \subseteq J+\mm I \subseteq I,~ J+\mm I$ is a reduction of $I$. Conversely, let $(J+\mm I)I^n=I^{n+1}.$ Then $JI^n+\mm I^{n+1}=I^{n+1}.$ By Nakayama's Lemma $JI^n=I^{n+1}$. Hence $J$ is a reduction of $I.$
\end{proof}

\begin{Definition}Let $I$ be an ideal of a local ring $(R,\mm)$. The fiber cone $F(I)$ of $I$ is the graded algebra $R[It]/\mm R[It]=\bigoplus_{n=0}^{\infty}I^n/{\mm I^n}$. The Krull dimension of $F(I),$ denoted by $\ell(I),$ is called the analytic spread of $I$. An ideal $J\subseteq I$ is called a  minimal reduction of $I$ if 
$J' \subseteq J$ and $J'$ is a reduction of $I$ then $J=J'$.
\end{Definition} 

We will prove that minimal reductions exist and if $R/\mm$ is infinite then all minimal reductions of an ideal $I$  require exactly $\ell(I)$ minimal generators.
 
\vspace{0.3cm}

\begin{Proposition} Let $(R,\mm)$ be a local ring, $I$ an ideal of $R$. For $a \in I,$ put $a^0=a+\mm I \in I/{\mm I}$. Let $J=(a_1,\,a_2,\,\ldots,\,a_s)\subseteq I$. Then $J$ is a reduction of $I$ if and only if $(a_1^0,\,a_2^0,\,\ldots,\,a_s^0)$ is primary  for the maximal homogeneous ideal $F(I)_+.$ In particular $\mu(J)\geq \ell(I)$.
\end{Proposition}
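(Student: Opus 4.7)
The plan is to reformulate the reduction condition entirely inside the fiber cone $F(I)=\bigoplus_{n\geq 0}I^n/\mm I^n$ and then invoke Krull's principal ideal theorem for the final inequality. First I would compute, for the homogeneous ideal $\aa=(a_1^0,\dots,a_s^0)F(I)$, the explicit form of its graded pieces: since each $a_i^0$ lives in degree one, a direct calculation gives
$$\aa_m \;=\; \bigl(JI^{m-1}+\mm I^m\bigr)/\mm I^m \quad\text{for every } m\geq 1.$$

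Next I would characterize the $F(I)_+$-primary condition. Observe that $F(I)$ is a standard graded algebra over $k=R/\mm$, generated by the finite-dimensional $k$-space $F(I)_1=I/\mm I$; in particular $F(I)$ is Noetherian and every element of $F(I)_m$ is a sum of products of $m$ elements of degree one. Hence $F(I)_+^N=\bigoplus_{m\geq N}F(I)_m$ for each $N\geq 1$, and $\aa$ is $F(I)_+$-primary precisely when there exists $N$ with $\aa_m=F(I)_m$ for all $m\geq N$. Combining this with the formula for $\aa_m$, the primary condition translates to
$$JI^{m-1}+\mm I^m=I^m \qquad \text{for all } m\geq N.$$
Nakayama's lemma, applied to the finitely generated $R$-module $I^m$, upgrades this to $JI^{m-1}=I^m$ for all $m\geq N$, i.e.\ $J$ is a reduction of $I$. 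The converse is immediate: if $JI^n=I^{n+1}$ for some $n$, then inductively $JI^m=I^{m+1}$ for all $m\geq n$, and passing to $F(I)$ gives $\aa_m=F(I)_m$ for $m\geq n+1$, so $F(I)_+^{n+1}\subseteq \aa$.

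For the final assertion, suppose $J$ is a reduction and take a minimal generating set for $J$ of cardinality $\mu(J)$. By the equivalence just proved, their images in $F(I)_1$ generate an $F(I)_+$-primary ideal in the Noetherian ring $F(I)$. Since the irrelevant ideal $F(I)_+$ is the unique graded maximal ideal and its height equals $\dim F(I)=\ell(I)$, every $F(I)_+$-primary ideal has height $\ell(I)$. Krull's principal ideal theorem then forces $\mu(J)\geq \ell(I)$. The main technical obstacle is the careful bookkeeping in the first paragraph (correctly identifying $\aa_m$ and applying Nakayama on each graded piece); everything else is a formal translation.
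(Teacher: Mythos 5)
Your argument is correct and follows the paper's proof essentially step for step: the same identification of the graded pieces $(a_1^0,\dots,a_s^0)_m=(JI^{m-1}+\mm I^m)/\mm I^m$, the same translation of the $F(I)_+$-primary condition into eventual equality of graded components followed by Nakayama, and the same appeal to Krull's height theorem (the paper's ``Dimension Theorem'') for $\mu(J)\geq \ell(I)$. The only difference is that you spell out the characterization of $F(I)_+$-primary ideals in a standard graded algebra more explicitly, which the paper leaves implicit.
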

\begin{proof} Let $J$ be a reduction of $I$. Then there is an $r$ such that $JI^r=I^{r+1}$. Note that  $(a_1^0,\,a_2^0,\,\ldots,\,a_s^0)_n=JI^{n-1}+\mm I^n/\mm I^n$ for $n\geq1$. Hence $(a_1^0,\,a_2^0,\,\ldots,\,a_s^0)_n=F(I)_n$ for $n\geq r+1$. Hence  $(a_1^0,\,a_2^0,\,\ldots,\,a_s^0)$ is $F(I)_+$-primary. 

Conversely let $(a_1^0,\,a_2^0,\,\ldots,\,a_s^0)$ be $F(I)_+$-primary. Hence
there exists $r$ so that for $n \geq r,$ $F(I)_n=(a_1^0,\,a_2^0,\,\ldots,\,a_s^0)_n.$  
Thus $JI^{r-1}+\mm I^r=I^{r}$. By Nakayama's Lemma, $JI^{r-1}=I^r$. Hence $J$ is a reduction of $I$. By Dimension Theorem $\mu(J)\geq \ell(I)$.
\end{proof}

\begin{Proposition} Let $J\subseteq I$ be a reduction of an ideal $I$ in a local ring $(R,\mm)$. Then $J$ contains a minimal reduction of $I$. Let $a_1,\,a_2,\,\ldots,\,a_s\in J$ be such that $a_1^0,\,a_2^0,\,\ldots,\,a_s^0\in I/{I\mm}$ are linearly independent and $s$ is minimal with respect to the property that $K=(a_1,\,a_2,\,\ldots,\,a_s)$ is a reduction of $I$ contained in $J$.
Then $K$ is a minimal reduction of $I$ contained in $J.$
\end{Proposition}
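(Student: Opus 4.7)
The plan is to show that the integer $s$ described in the statement is well-defined, and then prove minimality of $K = (a_1,\ldots,a_s)$ by a short invertible-matrix argument; both the existence claim and the minimality claim will follow from the same construction.

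First I would verify that the set $\mathcal{S}$ of integers $s$ admitting elements $a_1,\ldots,a_s\in J$ with $R/\mm$-linearly independent images in $I/\mm I$ and with $(a_1,\ldots,a_s)$ a reduction of $I$ is non-empty. Pick any finite generating set $b_1,\ldots,b_m$ of $J$ and a maximal $R/\mm$-linearly independent subfamily of the images $b_j^0$ in $I/\mm I$, say $a_1^0,\ldots,a_s^0$. Expressing each remaining $b_j^0$ as an $R/\mm$-combination of these and lifting, one finds $(a_1,\ldots,a_s)+\mm I=J+\mm I$. Since $J$ is a reduction of $I$, the proposition characterising reductions modulo $\mm I$ shows that $J+\mm I$, hence $(a_1,\ldots,a_s)+\mm I$, and hence $(a_1,\ldots,a_s)$, are reductions of $I$. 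So $\mathcal{S}\neq\emptyset$, and being bounded below it has a minimum.

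Now let $a_1,\ldots,a_s\in J$ realise this minimum and set $K=(a_1,\ldots,a_s)$. To prove $K$ is a minimal reduction, I would let $K'\subseteq K$ be any reduction of $I$ and argue $K'=K$. The same extraction applied inside $K'$ produces $c_1,\ldots,c_t\in K'$ with $c_j^0$ linearly independent in $I/\mm I$ and $(c_1,\ldots,c_t)$ a reduction of $I$; since this reduction is contained in $J$, minimality of $s$ gives $t\geq s$. On the other hand every $c_j^0$ lies in the $s$-dimensional subspace of $I/\mm I$ spanned by $a_1^0,\ldots,a_s^0$, so $t\leq s$, hence $t=s$.

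Writing $c_j=\sum_i r_{ji}\,a_i$ with $r_{ji}\in R$ and reducing modulo $\mm I$ gives $c_j^0=\sum_i\bar r_{ji}\,a_i^0$, so the matrix $(\bar r_{ji})$ is invertible over $R/\mm$. Therefore $\det(r_{ji})$ is a unit in the local ring $R$, the matrix $(r_{ji})$ is invertible over $R$, and each $a_i$ is an $R$-linear combination of $c_1,\ldots,c_s$. Consequently $K=(a_1,\ldots,a_s)\subseteq(c_1,\ldots,c_s)\subseteq K'\subseteq K$, giving $K'=K$; in particular, $K$ is a minimal reduction of $I$ contained in $J$. The only subtlety is keeping $\mm I$ and $\mm K$ straight when passing to $R/\mm$, but once the determinant-is-a-unit principle is invoked in the local ring the argument closes cleanly.
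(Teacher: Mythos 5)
Your proof is correct, and it reaches the conclusion by a genuinely different final mechanism than the paper, even though the two arguments share the same pivot: both use the minimality of $s$ to force $K'$ to have the same image as $K$ in $I/\mm I$. The paper makes this precise by first proving $K\cap \mm I=\mm K$ from the linear independence of the $a_i^0$, then showing $K'+\mm I=K+\mm I$ (otherwise a basis of $(K'+\mm I)/\mm I$ would yield a reduction inside $J$ on fewer than $s$ generators), and finally combining the modular law with Nakayama's lemma: $K\subseteq (K'+\mm I)\cap K=K'+\mm K$ gives $K'=K$. You instead extract $c_1,\dots,c_t\in K'$ with independent images and a reduction property, get $t=s$ by trapping $t$ between the minimality bound and the dimension of the span of the $a_i^0$, and then conclude via the change-of-basis matrix $(\bar r_{ji})$ being invertible over $R/\mm$, hence $(r_{ji})$ invertible over the local ring $R$, which yields $K\subseteq K'$ outright. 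Your determinant argument is a clean substitute for the modular-law-plus-Nakayama step and uses the linear independence of the $a_i^0$ in a different place (to make the coordinate matrix well defined and square); you also supply the existence of the minimal $s$ explicitly, which the paper leaves implicit. One small point worth noting in a write-up is the degenerate case $K'\subseteq \mm I$ (so $t=0$), which forces $s=0$ and $I$ nilpotent; your argument still closes there, but it deserves a sentence.
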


\begin{proof} Suppose $K' \subseteq K$ and $K'$ be a reduction of $I$. Let $f\,:\,K/{\mm K}\longrightarrow I/{\mm I}$ be the natural map of $k:=R/\mm$-vector spaces. Since $a_1^0,\,a_2^0,\,\ldots,\,a_s^0 \in I/\mm I$ are $k$-linearly independent, $a_1+\mm K,\cdots,a_s+\mm K$ are $k$-linearly independent in $K/{\mm K}$. Hence $\Ker f= K\cap \mm I/\mm K=0$. Therefore $ K\cap \mm I=\mm K$. 

Next observe that $K'+\mm I=K+\mm I$. Indeed let $K'+\mm I< K+\mm I$. Then $K'+\mm I/\mm I $ is a proper subspace of $K+\mm I/\mm I$. Let 
$t=\dim (K'+\mm I)/\mm I$ and $b_1,\,b_2,\,\ldots,\,b_t\in K$ such that $b_1^0,\,b_2^0,\,\ldots,\,b_t^0\in I/{I\mm }$ are linearly independent. Since $K'$ is a reduction of $I$, $\dim F(I)/{(b_1^0,\,b_2^0,\,\ldots,\,b_t^0)}=0$. This contradicts the minimality of $s$.

Thus $K\subseteq (K'+\mm I)\cap K=K'+(\mm I\cap K)=K'+\mm K$. By Nakayama's Lemma $K'=K$. Therefore $K$ is a minimal reduction of $I.$
\end{proof}

\begin{Proposition}
Let $(R, \mm)$  be a local ring with infinite residue field $k.$ 
Let $I$ be an ideal of $R$ and $ a_1, a_2, \cdots, a_s \in I.$ Then 
$J = (a_1, a_2, \ldots, a_s)$ is a minimal reduction of $I$ if and only if $(a_1^{0}, a_2^{0}, \ldots, a_s^{0})$ is a homogeneous system of parameters of $F(I)$.
\end{Proposition}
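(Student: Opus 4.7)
The plan is to combine Propositions~3.5 and~3.7 with a graded Noether-normalization argument that exploits the infinitude of $k$.

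For the direction (h.s.o.p.\ $\Rightarrow$ minimal reduction), the hypothesis forces $s = \dim F(I) = \ell(I)$ and $(a_1^0,\ldots,a_s^0)$ to be $F(I)_+$-primary, so $J$ is a reduction of $I$ by Proposition~3.5. Since the $a_i^0$ lie in degree $1$, any $k$-linear dependence among them would produce an $F(I)_+$-primary ideal with fewer than $\dim F(I)$ generators, contradicting the dimension theorem; hence $a_1^0,\ldots,a_s^0$ are $k$-linearly independent in $F(I)_1 = I/\mm I$. To verify minimality, I would take any reduction $K \subseteq J$ of $I$ and invoke Proposition~3.7 to produce a minimal reduction $K' = (b_1,\ldots,b_{s'}) \subseteq K$ with the $b_j^0$ linearly independent in $I/\mm I$. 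Proposition~3.5 gives $s' \geq \ell(I) = s$, while $b_j \in J$ places each $b_j^0$ in the span $\langle a_1^0,\ldots,a_s^0\rangle_k$, forcing $s' \leq s$. So $s' = s$ and both families are $k$-bases of the same $s$-dimensional subspace of $I/\mm I$. Writing $b_j = \sum_i r_{ji}a_i$, the $s \times s$ residue matrix $(\bar r_{ji})$ is then invertible over $k$, so its lift has unit determinant in $R$ and is invertible over $R$; hence each $a_i$ lies in $K'$, and $K = K' = J$.

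For the converse I assume $J$ is a minimal reduction, with $a_1,\ldots,a_s$ a minimal generating set (so $s = \mu(J)$). Proposition~3.5 gives that $(a_1^0,\ldots,a_s^0)$ is $F(I)_+$-primary and $s \geq \ell(I)$. The heart of the argument is the reverse inequality, and this is where infiniteness of $k$ enters. Since $(a_1^0,\ldots,a_s^0)$ is $F(I)_+$-primary, $F(I)$ is finite over the standard graded $k$-subalgebra $B = k[a_1^0,\ldots,a_s^0]$, so $\dim B = \ell(I)$. Graded Noether normalization over the infinite field $k$ then yields $\ell(I)$ generic $k$-linear combinations $c_1,\ldots,c_{\ell(I)}$ of the $a_i^0$ forming an h.s.o.p.\ of $B$, hence of $F(I)$ by integrality. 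Lifting the coefficients from $k$ to units in $R$ produces elements $\tilde c_1,\ldots,\tilde c_{\ell(I)} \in J$ whose ideal is a reduction of $I$ by Proposition~3.5 and is contained in $J$; minimality of $J$ forces equality, so $s = \mu(J) \leq \ell(I)$. Combining, $s = \ell(I) = \dim F(I)$, so the $F(I)_+$-primary ideal $(a_1^0,\ldots,a_s^0)$ is generated by exactly $\dim F(I)$ homogeneous elements and is therefore an h.s.o.p.

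The main obstacle is the Noether-normalization step in the converse: one must extract $\ell(I)$ $k$-linear combinations of the $a_i^0$ whose ideal remains $F(I)_+$-primary, and this is precisely where the infinite residue field hypothesis is indispensable. A minor but genuine subtlety is that the coefficients must be lifted to \emph{units} of $R$ so that the resulting elements actually lie in $J$ rather than only in $J + \mm I$.
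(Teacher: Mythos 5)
Your argument is correct, and it draws on the same toolkit as the paper's proof (the criterion that $J$ is a reduction iff $(a_1^0,\dots,a_s^0)$ is $F(I)_+$-primary, the extraction of a minimal reduction from a given one, and graded Noether normalization over the infinite residue field), but the way you deploy Noether normalization is genuinely different and, in fact, tighter. In the direction (minimal reduction $\Rightarrow$ h.s.o.p.), the paper normalizes in all of $F(I)$, producing elements $b_1,\dots,b_{\ell(I)}\in I$ that need not lie in $J$, and then appeals to ``minimality of $s$''; this only bites once one knows separately that minimal reductions minimize the number of generators, since minimality of a reduction is defined by inclusion. You instead normalize inside the subalgebra $k[a_1^0,\dots,a_s^0]$, so the lifted elements $\tilde c_j$ are $R$-combinations of the $a_i$ and automatically lie in $J$; minimality of $J$ with respect to inclusion then applies directly and gives $\mu(J)\le\ell(I)$. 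Likewise, for the converse the paper simply cites the earlier proposition on minimal reductions, whereas you verify minimality from scratch: any reduction $K\subseteq J$ contains a minimal one $K'$, a dimension count in $I/\mm I$ shows its generators and the $a_i$ span the same $s$-dimensional subspace, and inverting the residue matrix (legitimate since $R$ is local) gives $K'=J$. What your route buys is that every appeal to minimality is unambiguous; what it costs is length. Two minor remarks: the ``lift the coefficients to units'' step is a red herring, since any lifts $r_{ij}\in R$ of the scalars already give $\tilde c_j=\sum_i r_{ij}a_i\in J$; and both you and the paper tacitly read the hypothesis as saying that $a_1,\dots,a_s$ is a minimal generating set of $J$, which is needed for the ``only if'' direction to be literally true.
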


\begin{proof} Let $J$ be a minimal reduction of $I$. Put $\ell=\ell(I).$ Then $s$ is smallest with respect to the property that $\dim F(I)/ (a_1^{0}, a_2^{0}, \cdots, a_s^{0}) =0$. Let $s > \ell(I)$. Since $k$ is infinite, by Noether Normalization, there exist $b_1, b_2, \cdots, b_{\ell}\in I$ such that $F(I)$ is integral over the polynomial ring $k[b_1^{0}, b_2^{0}, \ldots, b_{\ell}^{0}]$. Hence $(b_1^{0}, b_2^{0}, \cdots, b_{\ell}^{0})F(I)$ is zero-dimensional. Therefore $(b_1, b_2, \cdots, b_{\ell})$ is a reduction of $I$. This contradicts minimality of $s$. Hence $s = \ell(I)$.

Conversely let $a_1 ,a_2 \cdots, a_s \in I$ such that $a_1^{0}, \ldots, a_s^{0}$ is a homogeneous system of parameters of $F(I)$. Then $(a_1 ,a_2,\cdots,a_s) = J$ is a reduction of $I$. By the above proposition it is a minimal reduction of $I$.
\end{proof}

\noindent Recall that {\em altitude } of an ideal $I,$ $\mbox{alt}~ I,$ is the maximum of the 
heights  of the minimal primes over $I.$

\begin{Corollary} Let   $I$ be an ideal of a local ring $(R,\mm).$ Then 
$$\mbox{alt} \leq  \ell(I) \leq \dim R.$$
\end{Corollary}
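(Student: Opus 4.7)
The inequalities $\mbox{alt}\, I \leq \ell(I)$ and $\ell(I) \leq \dim R$ are essentially independent, so I would prove them separately. The first uses the minimal reduction machinery of Proposition 3.9, preceded by a standard reduction to the case of an infinite residue field; the second uses a comparison between the fiber cone $F(I)$ and the associated graded ring $G(I)$.

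For the upper bound, one may assume $I \subseteq \mm$ (else $I = R$, $F(I) = R/\mm$, and $\ell(I) = 0 \leq \dim R$). The containment $I^{n+1} \subseteq \mm I^n$, valid for every $n \geq 0$, makes the canonical surjections $I^n/I^{n+1} \twoheadrightarrow I^n/\mm I^n$ assemble into a surjective homomorphism of graded rings $G(I) \twoheadrightarrow F(I)$. Therefore $\ell(I) = \dim F(I) \leq \dim G(I)$, and $\dim G(I) = \dim R$ by the standard theorem on the dimension of the associated graded ring of a proper ideal in a Noetherian local ring.

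For the lower bound, first replace $R$ by the faithfully flat local extension $R(x) := R[x]_{\mm R[x]}$, whose residue field $(R/\mm)(x)$ is infinite. Under this extension $\dim R$, $\mbox{alt}\, I$, and $\ell(I)$ are all unchanged (the last because $F(IR(x)) \cong F(I) \otimes_{R/\mm} (R/\mm)(x)$ is obtained by base change along a field extension, which preserves Krull dimension). Now invoke Proposition 3.9 to choose a minimal reduction $J = (a_1, \ldots, a_\ell)$ of $I$ with $\mu(J) = \ell = \ell(I)$. From $JI^n = I^{n+1}$ we get $I^{n+1} \subseteq J$, hence $I \subseteq \sqrt{J}$ and $\sqrt{I} = \sqrt{J}$, so $I$ and $J$ share their minimal primes. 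Krull's height theorem then gives $\mbox{alt}\, I = \mbox{alt}\, J \leq \mu(J) = \ell(I)$.

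The main obstacle is that both directions invoke results sitting slightly outside the material the paper has developed in this section: the equality $\dim G(I) = \dim R$ for the upper bound, and the invariance of $F(I)$ and $\ell(I)$ under the extension $R \to R(x)$ for the lower bound. Neither is deep, but both must be cited or verified inline; in a self-contained write-up the proof of $\dim G(I) = \dim R$ (via, say, the extended Rees ring $R[It, t^{-1}]$, using that $t^{-1}$ is a nonzerodivisor and $R[It, t^{-1}][t] = R[t, t^{-1}]$ has dimension $\dim R + 1$) would be the longest piece.
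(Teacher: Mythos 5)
Your proof follows the same route as the paper's: the lower bound via a minimal reduction $J$ with $\sqrt{J}=\sqrt{I}$, so that $I$ and $J$ have the same minimal primes, followed by Krull's height theorem giving $\hei p\leq\mu(J)=\ell(I)$; and the upper bound by comparing $F(I)$ with $G(I)$ and using $\dim G(I)=\dim R$. The one (welcome) refinement is that you realize the comparison as a graded surjection $G(I)\twoheadrightarrow F(I)$, which works for any proper ideal, whereas the paper compares degrees of Hilbert polynomials of $F(I)$ and $G(I)$ via $\lambda(I^n/\mm I^n)\leq\lambda(I^n/I^{n+1})$, an argument that tacitly requires the lengths $\lambda(I^n/I^{n+1})$ to be finite (i.e.\ $I$ to be $\mm$-primary); you are also more explicit than the paper about why the passage to $R(x)$ changes neither $\dim R$, nor $\mbox{alt}\,I$, nor $\ell(I)$.
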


\begin{proof}  We may assume that $R/\mm $ is infinite. 
Let $J$ be a minimal reduction of $I$. Then $V(I) = V(J)$. If $p$ is any minimal prime of $J.$ Then $\hei p \leq \mu(J) = \ell(I)$. Since $\lambda(I^{n} /\mm I^{n}) \leq \lambda(I^{n} /I^{n + 1})$ for all $n,$  the degree of Hilbert polynomial of $F(I)$ is atmost that of the Hilbert polynomial of $G(I)$. Hence $\ell(I)=\dim F(I) \leq \dim G(I)$ = $\dim R$. 
\end{proof}

\section{\bf Hilbert function in $1$-dimensional Cohen-Macaulay local rings}
Throughout this section $(A,\mm)$ is a $1$-dimensional Cohen-Macaulay local ring with $A/\mm$ infinite.
 Let $I$ be an $\mm-$primary ideal. The Hilbert function of $I$ is the function $H_I(n)=\lambda (A/I^n).$ 
The Hilbert polynomial of $I$, $P_I(n)$ has degree $1$. Write $$P_I(n)=e_0n-e_1.$$
The {\em postulation number} of $I$ is defined to be 
$$n(I)=\max \{n ~|~ H_I(n)\not=P_I(n)\}.$$
Since $\hei I=1=\dim A$, $\ell(I)=1$. Since $A/\mm$ is infinite, there exists 
$a \in I$ such that $(a)$ is a reduction of $I.$ 
\begin{Theorem}[Northcott, 1960] Let $I$ be an $\mm$-primary ideal of a one dimensional Cohen-Macaulay local ring $(A,\mm).$  Let $(a)$ be a minimal reduction of $I.$ Then\\
\noindent
$(1)~ P_I(n+1)-H_I(n+1)\geq P_I(n)-H_I(n)$ for all $n\geq 0.$ \\
$(2)~ e_0-e_1 \leq \lambda (A/I).$\\
$(3)~ e_1\geq 0$ with equality if and only if $I$ is principal.
\end{Theorem}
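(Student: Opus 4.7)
The plan is to exploit the fact that a minimal reduction $(a)$ of the $\mm$-primary ideal $I$ in a one-dimensional Cohen-Macaulay ring consists of a non-zero-divisor, and to extract everything from the multiplication-by-$a$ exact sequence
$$
0 \to (I^{n+1}:a)/I^n \to A/I^n \xrightarrow{\cdot a} A/I^{n+1} \to A/(aA+I^{n+1}) \to 0.
$$
Taking alternating lengths gives
$$
H_I(n+1)-H_I(n)=\lambda\bigl(A/(aA+I^{n+1})\bigr)-\lambda\bigl((I^{n+1}:a)/I^n\bigr).
$$
Since $\lambda(A/(a))=e_0((a))=e_0(I)=e_0$, and the surjection $A/(a)\twoheadrightarrow A/(aA+I^{n+1})$ shows $\lambda(A/(aA+I^{n+1}))\le e_0$, I get the basic inequality
$$
H_I(n+1)-H_I(n)\le e_0+\lambda\bigl((I^{n+1}:a)/I^n\bigr)\quad\text{with }\lambda\bigl((I^{n+1}:a)/I^n\bigr)\ge 0.
$$

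For part (1) I subtract this from $P_I(n+1)-P_I(n)=e_0$, obtaining
$$
(P_I(n+1)-H_I(n+1))-(P_I(n)-H_I(n))=e_0-\lambda\bigl(A/(aA+I^{n+1})\bigr)+\lambda\bigl((I^{n+1}:a)/I^n\bigr)\ge 0,
$$
which is exactly the claimed monotonicity. For part (2), note that $P_I(n)=H_I(n)$ for $n\gg 0$, so by (1) the integer-valued function $P_I(n)-H_I(n)$ is non-decreasing and eventually zero, hence $P_I(n)-H_I(n)\le 0$ for every $n\ge 0$. Evaluating at $n=1$ gives $e_0-e_1\le\lambda(A/I)$. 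Part (3) follows by evaluating at $n=0$: $P_I(0)-H_I(0)=-e_1\le 0$, so $e_1\ge 0$.

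The main subtlety is the characterization of equality $e_1=0$. If $e_1=0$ then $P_I(n)-H_I(n)\equiv 0$ for all $n\ge 0$, so every increment in the identity above must vanish. Since $\lambda(A/(aA+I^{n+1}))\le e_0$ and $\lambda((I^{n+1}:a)/I^n)\ge 0$, the only way to get $0=e_0-\lambda(A/(aA+I^{n+1}))+\lambda((I^{n+1}:a)/I^n)$ is to have both $\lambda(A/(aA+I^{n+1}))=e_0=\lambda(A/(a))$ and $\lambda((I^{n+1}:a)/I^n)=0$, simultaneously for every $n\ge 0$. The first equality forces $I^{n+1}\subseteq (a)$, and taking $n=0$ yields $I\subseteq(a)\subseteq I$, hence $I=(a)$ is principal. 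The converse is immediate: if $I=(a)$ then $I^n=a^n A$ with $a$ a non-zero-divisor, so $\lambda(A/I^n)=n\,\lambda(A/I)=ne_0$, giving $e_1=0$. The only point one must watch is that $a$ being a non-zero-divisor is what forces $(I^{n+1}:a)=I^n$ once $aI^n=I^{n+1}$, and this cohabitation of the kernel term and the cokernel term is what makes the entire argument so clean.
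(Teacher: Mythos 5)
Your proof is correct and follows essentially the same route as the paper: both arguments rest on length bookkeeping for multiplication by the non-zero-divisor $a$, yielding that $P_I(n)-H_I(n)$ is non-decreasing with a non-negative correction term, and then evaluating at small $n$. Your two terms $e_0-\lambda\bigl(A/(aA+I^{n+1})\bigr)=\lambda\bigl(I^{n+1}/(I^{n+1}\cap aA)\bigr)$ and $\lambda\bigl((I^{n+1}:a)/I^n\bigr)\cong\lambda\bigl((I^{n+1}\cap aA)/aI^n\bigr)$ sum to the paper's single correction term $\lambda(I^{n+1}/aI^n)$, so the identities coincide; the only cosmetic difference is that the paper reads $e_1\geq\lambda(I/(a))$ off the $n=1$ case to settle part (3), whereas you evaluate at $n=0$ and then force every increment to vanish.
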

\begin{proof} 
Notice that for all $n\geq 0,$
\begin{eqnarray} \nonumber
P_I(n+1)-H_I(n+1) & = & (n+1)e_0-e_1-\lambda (A/I^{n+1})\\ 
\nonumber &= & ne_0-e_1-\lambda ((a)/aI^n)+\lambda (I^{n+1}/aI^n)\\ 
 \label{3.1} &= & P_I(n)-H_I(n)+\lm (I^{n+1}/aI^n).
\end{eqnarray}
Hence $H_I(n)-P_I(n)\geq H_I(n+1)-P_I(n+1).$ 
For large $n$, $H_I(n)=P_I(n)$, hence for all $n\geq 0,$ $H_I(n)\geq P_I(n)$. For $n=1$ we get $\lm (A/I)\geq e_0-e_1$. Thus $e_1\geq e_0-\lm (A/I)=\lm (A/(a))-\lm (A/I)=\lm (I/(a))\geq 0$.
Therefore if  $e_1=0$ then $I=(a)$. Conversely if $I=(a)$ then for all $n \geq 1,$
$$\lm (A/(a)^n) = \lm (A/(a))+\lm ((a)/(a)^2)+\cdots +\lm ((a)^{n-1}/(a)^n)
=  ne_0.$$
Hence $e_1(a)=0.$
\end{proof}
\begin{Proposition} \label{rednum}Let $(A,\mm)$ be a one dimensional Cohen-Macaulay local ring.
Let $(a)$ be a minimal reduction of an $\mm$-primary ideal $I.$ Then
$$r_{(a)}I=n(I)+1.$$
\end{Proposition}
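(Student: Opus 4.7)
The plan is to exploit the key identity established in the proof of Theorem~4.1 (Northcott), namely
\begin{equation*}
H_I(n+1)-P_I(n+1) \;=\; H_I(n)-P_I(n) \;-\; \lambda(I^{n+1}/aI^n),
\end{equation*}
which rewrites as $\lambda(I^{n+1}/aI^n) = \bigl(H_I(n)-P_I(n)\bigr) - \bigl(H_I(n+1)-P_I(n+1)\bigr)$ for all $n\geq 0$. This single identity links the vanishing of $I^{n+1}/aI^n$ (i.e.\ the reduction number) with the vanishing of $H_I(n)-P_I(n)$ (i.e.\ the postulation number).

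First, I would record two elementary facts about $r=r_{(a)}(I)$. Multiplying $aI^n = I^{n+1}$ by $I$ yields $aI^{n+1}=I^{n+2}$, so the condition $aI^n = I^{n+1}$ is inherited by all larger indices; consequently $\lambda(I^{n+1}/aI^n)=0$ for all $n\geq r$, while $\lambda(I^r/aI^{r-1})>0$ by the minimality of $r$ (assuming $r\geq 1$; I will handle $r=0$ separately as a trivial case).

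Next I would assemble the proof of $r \leq n(I)+1$. From the identity above together with the fact $\lambda(I^{n+1}/aI^n)\geq 0$, the sequence $H_I(n)-P_I(n)$ is non-increasing in $n$; since it is eventually $0$ and, by Theorem~4.1(1), also non-negative for all $n\geq 0$, we obtain $H_I(n)-P_I(n)=0$ whenever $\lambda(I^{m+1}/aI^m)=0$ for all $m\geq n$. In particular $H_I(n)=P_I(n)$ for all $n\geq r$, so $n(I)\leq r-1$.

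Finally, for the reverse inequality $r\leq n(I)+1$, I would apply the identity at $n=r-1$: the strict inequality $\lambda(I^r/aI^{r-1})>0$ gives $H_I(r-1)-P_I(r-1) > H_I(r)-P_I(r)\geq 0$, so $H_I(r-1)\neq P_I(r-1)$, forcing $n(I)\geq r-1$. Combining the two inequalities yields $r_{(a)}(I)=n(I)+1$. The only step that requires any delicacy is the monotonicity/non-negativity of $H_I(n)-P_I(n)$, but both are already contained in Theorem~4.1, so the argument is essentially a bookkeeping consequence of the displayed identity.
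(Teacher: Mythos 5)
Your proof is correct and follows essentially the same route as the paper: everything rests on the identity $H_I(n+1)-P_I(n+1)=H_I(n)-P_I(n)-\lambda(I^{n+1}/aI^n)$ from the proof of Northcott's theorem, together with the non-negativity of $H_I-P_I$. The only cosmetic difference is in the inequality $r\le n(I)+1$: the paper evaluates the identity at $n=n(I)+1$ to conclude $I^{n(I)+2}=aI^{n(I)+1}$ directly, whereas you evaluate it at $n=r-1$ and use $\lambda(I^r/aI^{r-1})>0$ to force $H_I(r-1)\ne P_I(r-1)$ --- the same argument read contrapositively.
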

\begin{proof}
By (\ref{3.1}), for all $n \geq 0,$ we have 
$$H_I(n+1)-P_I(n+1)=H_I(n)-P_I(n)-\lm (I^{n+1}/aI^n).$$
Put $k=n(I)$ and $r=r_{(a)}I$. Hence for all $n\geq r,$~~
$H_I(n)-P_I(n)=H_I(r)-P_I(r).$
But $H_I(n)=P_I(n)$ for large $n.$  Hence $H_I(r)=P_I(r)$. Thus $k\leq r-1$. To prove $k\geq r-1$, put $n=k+1$ in (\ref{3.1}) to get $$H_I(k+2)-P_I(k+2)=H_I(k+1)-P_I(k+1)-\lm (I^{k+2}/aI^{k+1}).$$
Hence $\lm (I^{k+2}/aI^{k+1})=0$. Thus $I^{k+2}=aI^{k+1}$ and hence $r\leq k+1$.
\end{proof}
\begin{Proposition} Let $(A,\mm)$ be a one dimensional Cohen-Macaulay local ring.
Let $(a)$ be a minimal reduction of an $\mm$-primary ideal $I.$ Then
$e_0-e_1=\lm (A/I)$ if and only if $aI=I^2$.
\end{Proposition}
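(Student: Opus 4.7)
I plan to deduce the proposition directly from two facts already established inside the proof of Theorem~4.1: the identity
\[ P_I(n+1) - H_I(n+1) = P_I(n) - H_I(n) + \lm(I^{n+1}/aI^n), \]
recorded there as~(\ref{3.1}), and the consequent inequality $H_I(n) \geq P_I(n)$ for all $n \geq 0$ (which follows because the identity makes $H_I(n) - P_I(n)$ non-increasing in $n$ while vanishing for $n \gg 0$). Both implications of the proposition should then be one-line consequences of these two inputs.

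For the direction $aI = I^2 \Rightarrow e_0 - e_1 = \lm(A/I)$, I would first observe that $aI = I^2$ forces $aI^n = I^{n+1}$ for every $n \geq 1$ by a trivial induction on $n$, so $\lm(I^{n+1}/aI^n) = 0$ for all such $n$. The identity above then makes $n \mapsto P_I(n) - H_I(n)$ constant for $n \geq 1$; since this function vanishes for $n \gg 0$, it already vanishes at $n = 1$, which is exactly the asserted equality $e_0 - e_1 = \lm(A/I)$.

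For the converse, I would assume $P_I(1) = H_I(1)$ and apply the identity at $n = 1$ to get $P_I(2) - H_I(2) = \lm(I^2/aI)$. The inequality $H_I(2) \geq P_I(2)$ then forces the right-hand side to be $\leq 0$, hence equal to $0$, giving $I^2 = aI$. No genuine obstacle arises; both directions reduce to a single application of~(\ref{3.1}), and the only point to watch is invoking $H_I \geq P_I$ in the converse direction to pin down the sign of the length $\lm(I^2/aI)$.
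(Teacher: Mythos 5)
Your proof is correct and follows essentially the same route as the paper's: both directions rest on the telescoping identity $P_I(n+1)-H_I(n+1)=P_I(n)-H_I(n)+\lm(I^{n+1}/aI^n)$ and the resulting inequality $H_I(n)\geq P_I(n)$. The only cosmetic difference is that the paper packages the conclusion through the postulation number via the relation $r_{(a)}I=n(I)+1$, whereas you apply the identity directly at $n=1$; the substance is identical.
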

\begin{proof}
Let $aI=I^2$. Then $n(I)\leq 0$. Hence $H_I(1)=P_I(1)$. This gives $e_0-e_1=\lm (A/I)$. Conversely let $e_0-e_1=\lm(A/I).$ By (\ref{3.1}), for $n \geq 1,$ 
$$0 \leq H_I(n)-P_I(n)\leq \lm (A/I)-e_0+e_1=0.$$  
By Proposition \ref{rednum}, $n(I)=r_{(a)}-1 \leq  0.$ Hence $(a)I=I^2.$

\end{proof}
\begin{Theorem}[Huckaba-Marley] \label{dim1} 
Let $(A,\mm)$ be a one dimensional Cohen-Macaulay local ring. Let $J:=(a)$ be a reduction of an $\mm$-primary ideal $I.$
Then \\
$(1)~ e_1(I)=\sum_{n\geq 1}\lm (I^n/JI^{n-1})\geq \sum_{n\geq 1}\lm (I^n/J\cap I^n).$\\
$(2)~ e_1(I)=\sum_{n\geq 1}\lm (I^n/J\cap I^n)$ if and only if $G(I)$ is Cohen-Macaulay.
\end{Theorem}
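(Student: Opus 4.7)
The plan is to deduce part (1) directly from the telescoping identity (\ref{3.1}) already established in the proof of Northcott's theorem, and then to obtain part (2) by identifying the inequality step with the Valabrega-Valla condition $(a)\cap I^n=aI^{n-1}$ and showing that this condition is equivalent to $a^\ast:=a+I^2\in G(I)_1$ being a nonzerodivisor on $G(I)$.

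For (1), I would sum the identity
\[
P_I(n+1)-H_I(n+1)=P_I(n)-H_I(n)+\lm(I^{n+1}/aI^n)
\]
from $n=0$ up to some $N$ large enough that $P_I(N)=H_I(N)$ (which exists) and also large enough that $I^{N}=aI^{N-1}$ (which exists because $(a)$ is a reduction). Using $P_I(0)=-e_1(I)$ and $H_I(0)=0$, telescoping gives
\[
e_1(I)=\sum_{n\ge 1}\lm(I^n/aI^{n-1})=\sum_{n\ge 1}\lm(I^n/JI^{n-1}),
\]
the sum being finite since the terms vanish for $n\ge r_{(a)}(I)+1$. Since $JI^{n-1}\subseteq J\cap I^n$, each individual term satisfies $\lm(I^n/JI^{n-1})\ge\lm(I^n/J\cap I^n)$, and summing yields the inequality in (1).

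For (2), the equality $e_1(I)=\sum_{n\ge 1}\lm(I^n/J\cap I^n)$ is equivalent, term by term, to $aI^{n-1}=(a)\cap I^n$ for every $n\ge 1$. I next show this Valabrega-Valla condition is equivalent to $a^\ast$ being a nonzerodivisor on $G(I)$. Note that $a$ itself is a nonzerodivisor in $A$ (it is a parameter in the one-dimensional Cohen-Macaulay ring $A$), and $a\notin I^2$ (otherwise iterating $I=aA\subseteq I^2$ and Nakayama would force $I=0$), so $a^\ast\neq 0$. The equivalence is then a direct verification: writing any element of $(a)\cap I^n$ as $ay$, cancelling $a$ translates the containment $(a)\cap I^n\subseteq aI^{n-1}$ into the statement ``$ay\in I^n\Rightarrow y\in I^{n-1}$'', which, by an easy induction on $n$, is precisely the nonzerodivisor property of $a^\ast$.

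It remains to see that the existence of such a nonzerodivisor is equivalent to $G(I)$ being Cohen-Macaulay. One direction is immediate: if $a^\ast$ is a nonzerodivisor then $\depth G(I)\ge 1=\dim G(I)$. For the converse, the key observation is that the graded module $G(I)/a^\ast G(I)$ has in degree $n$ the finite-length module $I^n/(aI^{n-1}+I^{n+1})$, which vanishes for $n\ge r_{(a)}(I)$ because $(a)$ is a reduction; hence $G(I)/a^\ast G(I)$ has finite length and so dimension zero. Thus $a^\ast$ is a homogeneous system of parameters in the one-dimensional graded ring $G(I)$, and in a Cohen-Macaulay ring every system of parameters is a regular sequence. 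The main subtle point in the argument is this last step, verifying that the specific generator $a$ of the reduction (not just some superficial element) produces a nonzerodivisor in $G(I)$; it is handled cleanly by the parameter-equals-regular-sequence principle in the Cohen-Macaulay setting.
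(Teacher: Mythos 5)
Your argument is correct and follows essentially the same route as the paper's: part (1) is the same telescoping computation (the paper sums $\lm(I^{m-1}/I^m)=e_0(I)-\lm(I^m/JI^{m-1})$, which is just the identity (\ref{3.1}) rearranged), and part (2) reduces, exactly as in the paper, to the Valabrega--Valla condition $(a)\cap I^n=aI^{n-1}$ for all $n$, with you merely supplying the details of its equivalence with Cohen--Macaulayness of $G(I)$ (via the fact that $a^\ast$ is a homogeneous system of parameters and every system of parameters in a Cohen--Macaulay ring is regular) that the paper leaves implicit. One small repair: your parenthetical reason that $a\notin I^2$ is garbled since $I\neq aA$ in general; the correct argument is that $a\in I^2$ together with $aI^n=I^{n+1}$ would give $I^{n+1}\subseteq I^{n+2}$, hence $I^{n+1}=0$ by Nakayama, which is impossible because $I$ is $\mm$-primary and $\dim A=1$.
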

\begin{proof}(Rossi)
For all $m\geq 1$ we have
$$\lm(I^{m-1}/I^m)=\lm(A/J)-\lm(I^m/JI^{m-1})=e_0(I)- \lm(I^m/JI^{m-1}).$$
Adding the above equation for $m=1,2,\ldots,n$ we obtain
$$\lm(A/I^n)=ne_0(I)-\sum_{m=1}^n\lm (I^m/JI^{m-1}).$$
Taking $n$ large in the above equation we get,  
$$P_I(n)= ne_0(I)-e_1(I)=ne_0(I)-\sum_{m=1}^{r_J(I)} \lm(I^m/JI^{m-1}).$$
Thus $e_1(I)=\sum_{m=1}^{r_J(I)} \lm(I^m/JI^{m-1}).$ Hence
$e_1(I) \geq \sum_{m=1}^{r_J(I)} \lm(I^m/J\cap I^{m})$   and equality holds if and only if $I^m\cap J=JI^{m-1}$  for all $m \geq 1.$ The last condition is equivalent to
$G(I)$ being Cohen-Macaulay.
\end{proof}
\begin{Example}{\rm
Let $k$ be a field and $t$ be an indeterminate. Let $A=k[[t^3,t^4,t^5]]$ and $\mm=(t^3,t^4,t^5)$. Check that $t^3\mm=\mm^2$. Since $A$ is Cohen-Macaulay, $e_0(\mm)=e_0(t^3)=\lm (A/t^3A)=3.$
Since $t^3\mm=\mm^2$ and $e_0-e_1=\lm(A/\mm)=1,$ we get $e_1(\mm)=2$. Therefore $P_\mm(n)=3n-2$ for all $n\geq 1.$}
\end{Example}

\section{\bf Superficial sequences}
In this section we develop the theory of superficial elements and superficial sequences. Existence of a  superficial element in an ideal $I$ allows us to relate Hilbert coefficients of $I$ and those of $I/(a).$ As a result we can first investigate Hilbert coefficients in dimension one and lift this information to higher dimension.
\begin{Definition}
Let $I$ be an ideal of a local ring $(A,\mm)$. We say $a\in I$ is \textit{superficial} if there is a $c\geq 0$ such that $(I^n \colon a)\cap I^c=I^{n-1}$ for all $n>c$.
\end{Definition}
\begin{Proposition} Let $I$ be an ideal of a local  ring $(A,\mm).$\\
$(1)$ If $I$ is nilpotent then every $a\in I$ is superficial for $I$.\\
$(2)$ If $I$ is not nilpotent then a superficial element $a$ of $I$ satisfies $a\in I\smallsetminus I^2$.

\end{Proposition}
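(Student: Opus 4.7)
For part (1), suppose $I^N = 0$ for some $N \geq 0$. I will show that any $c \geq N$ works in the definition of superficial. Indeed, for $n > c \geq N$ both $I^n$ and $I^c$ vanish, so $(I^n : a) \cap I^c = (0 : a) \cap 0 = 0$, while $I^{n-1} = 0$ as well since $n-1 \geq N$. Hence every $a \in I$ is superficial.

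For part (2), the strategy is proof by contradiction: assume $a \in I^2$ and deduce that $I$ must be nilpotent. Let $c$ be the constant from the superficial condition. The key observation is that since $a \in I^2$, for any $n \geq 2$ we have $a \cdot I^{n-2} \subseteq I^n$, hence $I^{n-2} \subseteq (I^n : a)$. Now choose $n \geq c + 2$; then $I^{n-2} \subseteq I^c$, so
\[
I^{n-2} \;\subseteq\; (I^n : a) \cap I^c \;=\; I^{n-1}.
\]
Combined with the trivial reverse containment $I^{n-1} \subseteq I^{n-2}$, this gives $I^{n-2} = I^{n-1}$ for every $n \geq c+2$.

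The final step is a standard Nakayama argument. From $I^{n-2} = I^{n-1} = I \cdot I^{n-2}$ and $I \subseteq \mm$ we get $I^{n-2} = \mm I^{n-2}$, and since $A$ is Noetherian and $I^{n-2}$ is finitely generated, Nakayama's lemma forces $I^{n-2} = 0$. Thus $I$ is nilpotent, contradicting the hypothesis. There is no real obstacle here; the only subtle point is choosing $n$ large enough ($n \geq c+2$) so that $I^{n-2}$ sits inside $I^c$ and the superficial condition can be applied.
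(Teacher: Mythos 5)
Your proof is correct and follows essentially the same route as the paper: for (1) the vanishing of all the relevant powers makes the condition trivial, and for (2) you use $a\in I^2$ to get $I^{c}\subseteq(I^{c+2}:a)$, deduce $I^{c}=I^{c+1}$ from the superficial condition at $n=c+2$, and finish with Nakayama. Working with general $n\geq c+2$ rather than just $n=c+2$ is a cosmetic difference only.
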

\begin{proof}
(1)  Let $I^r=0$. Then for $c=r+1$ and $n>r+1,$ we have $(I^n:a)\cap I^c=I^{n-1}=0$ for any $a\in I$. Hence $a$ is superficial for $I.$ \\
(2) Suppose  $I$  is  not nilpotent. Suppose $a$ is superficial for $I$ and for all $n>c$, $(I^n:a)\cap I^c=I^{n-1}$. Suppose $a\in I^2$. Put $n=c+2$ to get $(I^{c+2}:a)\cap I^c=I^{c+1}$. As $aI^c \subseteq I^{c+2},$  $I^c=I^{c+1}$. By Nakayama's Lemma $I^c=0$. This is a contradiction. Hence $a\in I\smallsetminus I^2$.
\end{proof}
\begin{Proposition}
Let $(A,\mm)$ be a local ring and let  $I$ be an ideal. Let $a\in I\smallsetminus I^2$ and $a^*=a+I^2$. Then $a$ is superficial for $I$ if and only if the multiplication map $a^*:I^n/I^{n+1}\longrightarrow I^{n+1}/I^{n+2}$ is injective for large $n.$
\end{Proposition}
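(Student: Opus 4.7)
The plan is to prove both implications using only the definition of superficiality and a standard consequence of Krull's intersection theorem, namely that $\bigcap_n I^n = 0$ in the Noetherian local ring $A$ (so every nonzero $x\in A$ has a well-defined largest index $k$ with $x\in I^k$).

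For the forward direction, suppose $a$ is superficial with constant $c$, so that $(I^m:a)\cap I^c = I^{m-1}$ for all $m>c$. I will show that multiplication by $a^*$ on $I^n/I^{n+1}$ is injective whenever $n\geq c$. Take $x\in I^n$ with $a^*(x+I^{n+1})=0$, i.e., $ax\in I^{n+2}$. Then $x\in (I^{n+2}:a)$, and since $n\geq c$ we also have $x\in I^n\subseteq I^c$. Applying the superficiality relation with index $n+2>c$ yields $x\in (I^{n+2}:a)\cap I^c = I^{n+1}$, establishing injectivity for all $n\geq c$.

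For the reverse direction, suppose injectivity of the multiplication map holds for all $n\geq N$; I claim $c:=N$ witnesses superficiality. Fix $n>c$. The inclusion $I^{n-1}\subseteq (I^n:a)\cap I^c$ is clear from $n-1\geq c$ and $aI^{n-1}\subseteq I^n$. For the reverse inclusion, take $x\in (I^n:a)\cap I^N$. If $x=0$ there is nothing to show; otherwise by Krull's intersection theorem let $k$ be the largest integer with $x\in I^k$, so $k\geq N$. Suppose for contradiction that $k\leq n-2$. Then $ax\in I^n\subseteq I^{k+2}$, so the class $x^*:= x+I^{k+1}\in I^k/I^{k+1}$ is nonzero, yet $a^*x^* = 0$ in $I^{k+1}/I^{k+2}$. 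This contradicts the assumed injectivity at level $k\geq N$. Hence $k\geq n-1$, i.e., $x\in I^{n-1}$.

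The main obstacle is the reverse direction, where one must convert an asymptotic injectivity statement about the graded pieces into a uniform Artin–Rees-type equality at finite stage; this is precisely where one needs the filtration to be separated so that every element has a well-defined largest power of $I$ containing it. Once Krull's intersection theorem is invoked, the contradiction argument with the largest such $k$ closes the argument cleanly, and the choice $c=N$ makes the constant explicit.
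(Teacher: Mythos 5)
Your proof is correct and follows essentially the same route as the paper: the forward direction applies the superficiality relation at index $n+2$, and the reverse direction picks the largest $k$ with $x\in I^k$ (using Krull intersection to justify its existence) and plays injectivity at level $k$ against $ax\in I^n$. If anything, your explicit contradiction at $k\le n-2$ and the explicit choice $c=N$ tidy up a step that the paper states somewhat circularly.
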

\begin{proof}
Let $(I^n:a)\cap I^c=I^{n-1}$ for all $n>c$. Suppose $n>c$, $b\in I^n$ and $b^*a^*=0$. Then $ba \in I^{n+2}$. Therefore $b\in (I^{n+2}:a)\cap I^c=I^{n+1}$. So $b^*=0$. Hence the map $a^*$ is injective for large $n.$

Conversely let $a^* : I^n/I^{n+1}\longrightarrow I^{n+1}/I^{n+2}$ be injective for $n > c.$ We show $(I^n:a)\cap I^c=I^{n-1}$ for all $n>c$. Let 
$b\in (I^{n}:a)\cap I^c$. Let $b\in I^m \smallsetminus I^{m+1}$. Then $b^*a^*\in I^{m+1}/I^{m+2}$. If $b^*a^*=0$ then $ab \in I^{m+2}$. Thus $b\in (I^{m+2}:a)\cap I^c=I^{m+1}$ which is a  contradiction. Therefore $b^*a^*\not= 0$ and consequently $ab \notin I^{m+2}$ and $ab \in I^n$. Thus $n<m+2$. Hence $b\in I^m\subseteq I^{n-1}$ and  $(I^n :a)\cap I^c=I^{n-1}$.
\end{proof}
\let\ov=\overline
\noindent {\bf Existence of superficial elements}

\begin{Proposition}
Let $(A,\mm)$ be a local ring with $A/\mm$ infinite. Let $M$ be an $A$-module. If $N_1, N_2,\ldots, N_t$ are proper submodules of  $M$ then $$N_1\cup N_2\cup \ldots \cup N_t<M.$$
\end{Proposition}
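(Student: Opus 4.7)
The plan is to run the classical ``a vector space over an infinite field is not a finite union of proper subspaces'' argument, adapted to $A$-submodules via the residue field $A/\mm$. The key observation allowing the adaptation is that if $\alpha_1,\ldots,\alpha_t\in A$ have pairwise distinct residues modulo $\mm$, then every difference $\alpha_i-\alpha_j$ (with $i\neq j$) is a unit of $A$, since $A/\mm$ is a field and the residue is nonzero. Such a system of $t$ elements exists precisely because $A/\mm$ is infinite.

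I would argue by contradiction, assuming $M=N_1\cup N_2\cup\cdots\cup N_t$ with $t$ chosen minimal. Minimality forces $t\geq 2$ (else $M=N_1$, which is proper) and also forces $N_1\not\subseteq N_2\cup\cdots\cup N_t$, for otherwise $N_1$ would be redundant. Hence I can pick $x\in N_1\setminus (N_2\cup\cdots\cup N_t)$. Because $N_1$ is itself proper in $M$, I can also pick $y\in M\setminus N_1$.

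The main step is to consider the family $z_\alpha:=y+\alpha x$ for $\alpha$ ranging over $A$. First, $z_\alpha\notin N_1$ for any $\alpha$: indeed, $\alpha x\in N_1$, so if $z_\alpha$ were in $N_1$ we would get $y=z_\alpha-\alpha x\in N_1$, contradicting the choice of $y$. Thus every $z_\alpha$ lies in $N_2\cup\cdots\cup N_t$. Now choose $\alpha_1,\ldots,\alpha_t\in A$ whose residues mod $\mm$ are pairwise distinct (possible since $A/\mm$ is infinite). By the pigeonhole principle two of $z_{\alpha_1},\ldots,z_{\alpha_t}$ land in the same $N_i$ with $i\geq 2$; say $z_{\alpha_j},z_{\alpha_k}\in N_i$ with $j\neq k$. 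Then
\[
z_{\alpha_j}-z_{\alpha_k}=(\alpha_j-\alpha_k)x\in N_i,
\]
and since $\alpha_j-\alpha_k$ is a unit, this yields $x\in N_i$, contradicting $x\notin N_2\cup\cdots\cup N_t$.

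This finishes the argument. There is no real obstacle here; the only subtlety is precisely where the hypothesis ``$A/\mm$ is infinite'' enters, namely in producing $t$ elements of $A$ whose pairwise differences are units. Without that hypothesis, the statement can fail (for example, a two-dimensional vector space over $\mathbb{F}_2$ is the union of its three one-dimensional subspaces), which explains why the hypothesis cannot be dropped.
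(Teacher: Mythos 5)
Your proof is correct and follows essentially the same route as the paper's: both reduce to the case where $N_1$ is not covered by the remaining submodules, produce scalars whose pairwise differences are units (this is where $A/\mm$ infinite enters), and apply the pigeonhole principle to an affine family of elements to force $x$ (resp.\ $a$) into some $N_i$, a contradiction. The only cosmetic differences are that the paper runs an induction on $t$ rather than taking $t$ minimal, and anchors its family at a point $a\in N_1$ with increments $ub$, $b\in(N_2\cup\cdots\cup N_t)\setminus N_1$, which costs it two difference computations where your choice $z_\alpha=y+\alpha x$ needs only one.
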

\begin{proof}
We apply induction on $t.$ The $t=1,2$ cases  are trivial. Suppose that $t \geq 3$ and let
$M=N_1\cup N_2 \cup \cdots \cup N_t.$ We may assume that $N_1\nsubseteq N_2\cup N_3\cup \ldots \cup N_t$ and $N_2\cup N_3\cup \ldots \cup N_t\nsubseteq N_1$. As 
$A/\mm$ is infinite there are units $u_1,u_2,\ldots \in A $ such that $u_i-u_j$ is a unit for $i\not= j$ in $A$. Let $a \in  N_1\smallsetminus (N_2\cup N_3\cup \ldots \cup N_t)$ and $b\in (N_2\cup N_3\cup \ldots \cup N_t)\smallsetminus N_1$. Then $a+ub$ and $a+wb\in N_j$ for some $j$ and distinct units $u,w\in A$ by Pigeon-Hole Principle. Since 
$$(a+ub)-(a+wb)=(u-w)b\in N_j,$$ 
and $u-w$ is a unit, $ b \in N_j$. 
By choice of $b$, $j\not= 1$. Since $w-u$ is a unit and 
$$w(a+ub)-u(a+wb)=(w-u)a\in N_j,$$ we conclude 
$a\in N_j$. The choice of $a$ forces $j=1$. This is a contradiction. Thus $N_1\cup N_2\cup \ldots \cup N_t<M$.
\end{proof}
\begin{Theorem}
Let $(A,\mm)$ be a local ring with $A/\mm$ infinite. Let $I,J_1,\ldots ,J_t$ be $A-$ideals with $I\nsubseteq J_1\cup \ldots \cup J_t$. Then there exists $a\in I\smallsetminus (J_1\cup \ldots \cup J_t)$ such that $a$ is superficial for $I$.
\end{Theorem}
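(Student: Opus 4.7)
My approach combines the prime avoidance supplied by the previous Proposition with the characterization of superficiality via the injectivity of multiplication by $a^{\ast}$ on $G_n$ for large $n$. Let $G=G(I)=\bigoplus_{n\ge 0}I^n/I^{n+1}$ and $G_+=\bigoplus_{n\ge 1}I^n/I^{n+1}$. As $G$ is Noetherian (it is generated in degree one over $G_0=A/I$ by the finitely generated module $I/I^2$), $\Ass_G(G)$ is finite; let $\Pp_1,\dots,\Pp_s$ be those associated primes of $G$ that do \emph{not} contain $G_+$. The key observation is that $V_i:=\Pp_i\cap G_1$ is a proper $A$-submodule of $G_1=I/I^2$: since $G_+=G_1\cdot G$, the containment $\Pp_i\supseteq G_1$ would force $\Pp_i\supseteq G_+$, contradicting the choice of $\Pp_i$. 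Pulling back, $N_i:=\{a\in I:a+I^2\in V_i\}$ is a proper $A$-submodule of $I$.

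\textbf{Choice of $a$ and verification.} If $I$ is nilpotent, every element of $I$ is superficial by an earlier Proposition and the previous Proposition applied to the $J_j\cap I$'s alone gives the result. So assume $I$ is not nilpotent, whence $I^2\subsetneq I$. The hypothesis $I\nsubseteq J_1\cup\cdots\cup J_t$ forces $J_j\cap I\subsetneq I$ for every $j$. The previous Proposition, applied to the proper $A$-submodules $N_1,\dots,N_s,I^2,J_1\cap I,\dots,J_t\cap I$ of $I$, produces
\[
a\in I\setminus\bigl(N_1\cup\cdots\cup N_s\cup I^2\cup(J_1\cap I)\cup\cdots\cup(J_t\cap I)\bigr).
\]
By construction $a\notin J_j$ for each $j$, $a^{\ast}:=a+I^2\ne 0$ in $G_1$, and $a^{\ast}\notin\Pp_i$ for each $i$. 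To see $a$ is superficial, consider the graded submodule $K:=(0:_Ga^{\ast})\subseteq G$. Every $\qq\in\Ass_G(K)$ lies in $\Ass_G(G)$ \emph{and} contains $a^{\ast}$; thus the choice $a^{\ast}\notin\Pp_i$ forces $\qq\supseteq G_+$. Hence $G_+\subseteq\sqrt{\Ann_G K}$, and as $K$ is a finitely generated graded $G$-module, $G_+^N K=0$ for some $N$. Combining with the fact that $K$ is generated in degrees $\le n_0$ for some $n_0$, a standard degree argument shows $K_n=0$ for all $n>n_0+N$. Hence multiplication by $a^{\ast}$ is injective on $G_n$ for all large $n$, so $a$ is superficial by the earlier Proposition.

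\textbf{Main obstacle.} The only delicate ingredient is verifying that $V_i=\Pp_i\cap G_1$ is proper in $G_1$; this is where the standard-graded structure $G=G_0[G_1]$ is used in an essential way. Once this is in place, the argument is a clean marriage of prime avoidance (the preceding Proposition) with the well-known fact that $0:_Ga^{\ast}$ is $G_+$-torsion whenever $a^{\ast}$ avoids the finitely many associated primes of $G$ not containing $G_+$.
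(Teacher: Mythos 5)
Your proof is correct and follows essentially the same route as the paper: both select $a$ by applying the prime-avoidance proposition in degree one of $G(I)$ so that $a^*$ misses the images of the $J_i$'s and every relevant prime of $G(I)$ (those not containing the irrelevant ideal), and both then verify superficiality by showing $(0:_{G(I)}a^*)$ vanishes in all large degrees. The only cosmetic difference is that the paper runs the last step through a reduced primary decomposition of $(0)$ in $G(I)$ (so that $b^*a^*=0$ forces $b^*\in Q_1\cap\cdots\cap Q_s$, while $G_N\subseteq Q_j$ for the components whose radicals contain $G_1$), whereas you run it through $\Ass G(I)$ and the observation that $(0:_{G(I)}a^*)$ is a finitely generated module killed by a power of $G_+$ --- the same set of primes and the same vanishing in the end.
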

\begin{proof}
First note that the $A/I-$submodules $(J_i\cap I)+I^2/I^2$, $i=1,2,\ldots ,t$ are proper submodules of $I/I^2$. Indeed, let $(J_i\cap I)+I^2=I$. By Nakayama's Lemma $J_i\cap I=I$. Hence $I\subseteq J_i,$ which is a contradiction. Let 
$$(0)=Q_1\cap \ldots \cap Q_s \cap Q_{s+1}\cap \ldots \cap Q_g$$ be a reduced primary decomposition of $(0)$ in $G(I)$. Put $G_n=I^n/I^{n+1}.$ Let $\sqrt{Q_i}=P_i$ for $i=1,2,\ldots ,g$. Suppose $G_1\nsubseteq P_i$ for $i=1,\ldots ,s$ and $G_1\subseteq P_j$ for $j=s+1,\ldots ,g$. Therefore $G_1\cap P_1,\ldots, G_1\cap P_s$ are proper $G_0$-submodules of $G_1$. By previous proposition, there is an $a\in I\smallsetminus I^2$ such that 
$$a^*\in G_1 \smallsetminus\{\displaystyle{[\cup_{i=1}^s}P_i]\cup \displaystyle{[\cup_{i=1}^t}(J_i\cap I)+I^2/I^2]\}.$$ 
We claim that $a$ is superficial for $I$. For this it is enough to show that $(0:a^*)\cap G_n=0$ for large $n.$  Suppose $b^*a^*=0$. Since $a^*\notin P_i$ for $i=1,\ldots ,s$, $b^*\in (Q_1\cap \ldots \cap Q_s)$. Since $Q_j$ is $P_j-$primary for all $j$, $P_j^N\subseteq Q_j$ for $N$ large. Hence $G_1^N=I^N/I^{N+1}\subseteq Q_j$ for $j=s+1,\ldots ,g$. Therefore
$$G_N\cap (0:a^*)\subseteq Q_1\cap \ldots \cap Q_s \cap Q_{s+1}\ldots \cap Q_g=(0).$$ Hence $a$ is superficial for $I.$
\end{proof}

\medskip
\noindent {\bf Superficial sequences and reductions}
\medskip

\begin{Definition}
Let $(A,\mm)$ be a local ring, and let  $I$ be an $A$-ideal. A sequence $x_1, x_2, \ldots, x_s\in I$ is called a \textit{superficial sequence} for $I$ or $I$-superficial sequence if $\ov{x_i}$ is superficial for $I/(x_1,\ldots, x_{i-1})$ for $i=1,2,\ldots,s.$ 
\end{Definition}
\begin{Lemma}\label{sup}
Let $x_1,\ldots, x_s$ be an $I$-superficial sequence for $I$. Then for  
$n>>0$, $$I^n\cap (x_1,\ldots ,x_s)=(x_1,\ldots, x_s)I^{n-1}.$$
\end{Lemma}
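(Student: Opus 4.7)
The plan is to induct on $s$, with the base case $s=1$ being the main substantive content. For the base case, I combine the Artin--Rees lemma with the defining superficial condition. Apply Artin--Rees to the submodule $(x_1) \subseteq A$ with respect to the $I$-adic filtration: there is an integer $k \geq 1$ such that for all $n \geq k$,
\[
I^n \cap (x_1) \;=\; I^{n-k}\bigl(I^k \cap (x_1)\bigr) \;\subseteq\; x_1\, I^{n-k},
\]
the last inclusion using $I^k \cap (x_1) \subseteq (x_1) = x_1 A$. Hence any $y \in I^n \cap (x_1)$ can be written $y = x_1 b$ for some $b \in I^{n-k}$, and this $b$ automatically lies in $(I^n : x_1)$. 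Let $c$ be the superficial integer for $x_1$, so that $(I^m : x_1) \cap I^c = I^{m-1}$ for every $m > c$. Once $n \geq c + k$, we have $b \in I^{n-k} \subseteq I^c$, whence
\[
b \;\in\; (I^n : x_1) \cap I^c \;=\; I^{n-1},
\]
and thus $y = x_1 b \in x_1 I^{n-1}$. The reverse inclusion $x_1 I^{n-1} \subseteq I^n \cap (x_1)$ is immediate.

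For the inductive step, I reduce modulo $x_1$ and let $\bar A = A/(x_1)$ with $\bar I$ the image of $I$. By the definition of superficial sequence, $\bar x_2, \ldots, \bar x_s$ is a superficial sequence for $\bar I$ in $\bar A$, so the induction hypothesis gives, for all sufficiently large $n$,
\[
\bar I^n \cap (\bar x_2, \ldots, \bar x_s) \;=\; (\bar x_2, \ldots, \bar x_s)\,\bar I^{n-1}.
\]
Given $y \in I^n \cap (x_1, \ldots, x_s)$, its image $\bar y$ lies in the right-hand side, which is the image of $(x_2, \ldots, x_s)I^{n-1}$ in $\bar A$. Choosing a lift: $y = z + w x_1$ for some $z \in (x_2, \ldots, x_s) I^{n-1}$ and $w \in A$. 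Since both $y$ and $z$ lie in $I^n$, the element $wx_1 = y - z$ lies in $(x_1) \cap I^n$, which by the base case equals $x_1 I^{n-1}$ (for $n$ large enough). Writing $w x_1 = x_1 u$ with $u \in I^{n-1}$ yields $y = z + x_1 u \in (x_1, x_2, \ldots, x_s) I^{n-1}$, as required.

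The main technical point is the base case, where one has to reconcile two \emph{a priori} unrelated integers: the Artin--Rees integer $k$, which only produces the coarse bound $I^n \cap (x_1) \subseteq x_1 I^{n-k}$, and the superficial integer $c$, which sharpens membership in $(I^n : x_1)$ down to $I^{n-1}$ for elements already lying in $I^c$. The resolution is to take $n \geq c + k$, so that the $I^{n-k}$-representative produced by Artin--Rees automatically lies in $I^c$ and can then be upgraded by the superficial condition. Notably, no non-zerodivisor hypothesis on $x_1$ is required: the usual $(0 : x_1)$ ambiguity in writing $y = x_1 b$ is harmlessly absorbed by choosing the particular representative arising from the Artin--Rees decomposition.
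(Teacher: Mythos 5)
Your proof is correct and takes essentially the same approach as the paper: the base case is the identical Artin--Rees-plus-superficiality argument (including the same reconciliation of the Artin--Rees integer with the superficial integer $c$), and the induction on $s$ differs only in that you peel off $x_1$ and apply the inductive hypothesis in $A/(x_1)$, whereas the paper peels off $x_s$ and applies the $s=1$ case in $A/(x_1,\ldots,x_{s-1})$ --- an immaterial reordering.
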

\begin{proof}
Induct on $s$. Let $s=1$. As $x_1$ is superficial for $I$, there is $c\geq 0$ such that for $n >  c$,
$$(I^{n+1}:x_1)\cap I^c=I^n.$$
By Artin-Rees Lemma, there is a $p$ such that $I^n\cap x_1A=I^{n-p}(I^p\cap x_1A) \subseteq x_1I^{n-p}$ for all $n\geq p$. We now show $I^n\cap x_1A=x_1I^{n-1}$ for all $n\geq c+p$. Let $y=bx_1\in I^n\cap x_1A$ for some $b\in A$. Then $y\in I^{n-p}x_1$. Hence $y=dx_1$, where $d\in I^{n-p}\subseteq I^c$, since $n-p\geq c$. Therefore $(b-d)x_1=0$. Hence $b-d\in (0:x_1)\subseteq (I^n:x_1)$. Now $d=b-(b-d)\in (I^n:x_1)\cap I^c=I^{n-1}$. Hence $I^n\cap x_1A=x_1I^{n-1}$ for $n\geq p+c$. 
Now let $s\geq 2$. By induction, for large $n,$ 
$$I^n\cap (x_1,\ldots ,x_{s-1})=(x_1,\ldots x_{s-1})I^{n-1}.$$
As $\ov x_s$ is superficial for $A/(x_1,\ldots ,x_{s-1})$, by $s=1$ case, for large $n,$ 
$$(\ov x_s)\cap [I/(x_1,\ldots , x_{s-1})]^n=(\ov x_s)[I/(x_1,\ldots , x_{s-1})]^{n-1}.$$   Hence for large $n,$
\begin{eqnarray*}
x_sI^{n-1}+(x_1,\ldots , x_{s-1})&=&(x_1,\ldots ,x_s)\cap [I^n+(x_1,\ldots , x_{s-1})]\\
&=& I^n\cap (x_1,\ldots , x_s)+(x_1,\ldots , x_{s-1}).
\end{eqnarray*}
Therefore
\begin{eqnarray*}
I^n\cap (x_1,\ldots , x_s)& \subseteq & x_sI^{n-1}+ (x_1,\ldots , x_{s-1})\cap I^n\\
& =& x_sI^{n-1}+ (x_1,\ldots , x_{s-1}) I^{n-1}\\
& = &(x_1,\ldots , x_{s-1}) I^{n-1}.
\end{eqnarray*}
\end{proof}
\begin{Lemma}\label{4.8}
Let $J\subseteq I$ be ideals of a local ring $A.$ . Let $a\in J$ be superficial for $I$. If $J/(a)$ is a reduction of $I/(a)$ then $J$ is a reduction of $I$.
\end{Lemma}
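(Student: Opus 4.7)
The plan is to translate the reduction relation from $A/(a)$ back up to $A$, and absorb the resulting $(a)$-component into $JI^n$ by exploiting the fact that $a$ is superficial, now repackaged as Lemma \ref{sup}.

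First I would unpack the hypothesis. Since $J/(a)$ is a reduction of $I/(a)$, there exists an integer $n$ with
$$\bigl(J/(a)\bigr)\bigl(I/(a)\bigr)^n=\bigl(I/(a)\bigr)^{n+1},$$
which lifts to the equation $JI^n+(a)=I^{n+1}+(a)$ in $A$. In particular, for any $x\in I^{n+1}$ I can write $x=y+z$ with $y\in JI^n$ and $z\in(a)$, and then $z=x-y\in I^{n+1}$ since $JI^n\subseteq I^{n+1}$. This yields the inclusion
$$I^{n+1}\subseteq JI^n+\bigl((a)\cap I^{n+1}\bigr).$$

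Next I would apply Lemma \ref{sup} to the length-one superficial sequence consisting of $a$ alone. For all sufficiently large $n$ it gives $(a)\cap I^{n+1}=aI^n$. Enlarging $n$ if needed so that both the reduction relation above and this equality hold simultaneously (both conditions are preserved under increasing $n$), I obtain
$$I^{n+1}\subseteq JI^n+aI^n=JI^n,$$
where the last equality uses $a\in J$. Since the reverse inclusion $JI^n\subseteq I^{n+1}$ is automatic, equality holds, so $J$ is a reduction of $I$.

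There is no real obstacle here: the essential work of controlling $(a)\cap I^{n+1}$ via the superficial property has already been done in Lemma \ref{sup}, and the argument is little more than bookkeeping on top of it. Without that lemma, one would have to reproduce its proof on the fly by combining Artin--Rees with the defining relation $(I^{m+1}:a)\cap I^c=I^m$ of a superficial element, but this reduces to exactly the same computation.
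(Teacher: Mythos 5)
Your proof is correct and is essentially the paper's own argument: the paper likewise lifts the reduction relation to $JI^n+(a)=I^{n+1}+(a)$, deduces $I^{n+1}\subseteq JI^n+\bigl((a)\cap I^{n+1}\bigr)$, and then uses $(a)\cap I^{n+1}=aI^n$ (Lemma \ref{sup} for the single superficial element $a$) together with $a\in J$ to conclude $I^{n+1}=JI^n$. Your version merely spells out the modular-law decomposition and the choice of a common large $n$ more explicitly.
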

\begin{proof}
Let $J/(a)$ be a  reduction of $I/(a).$ Then for large $n$, 
$JI^n+(a)=I^{n+1}+(a)$. Hence $I^{n+1}\subseteq JI^n+(a)\cap I^{n+1}=JI^n+aI^n=JI^n$ for large $n.$  Hence $JI^n=I^{n+1}$. Thus $J$ is a reduction of $I$.
\end{proof}

\begin{Proposition}
Let $I$ be an ideal of a local ring $(A,\mm)$. Let $a$ be superficial for $I$ and $a^*=a+I^2$. Then for large $n,$
$$[G(I)/a^*]_n\cong G(I/aA)_n.$$
\end{Proposition}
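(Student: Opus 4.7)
The plan is to compute both graded pieces explicitly and then match them using the conclusion of Lemma~\ref{sup} applied to the superficial element $a$.

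First I would compute $[G(I)/a^* G(I)]_n$. Since $a^* = a + I^2$ lives in degree one, multiplication by $a^*$ sends $I^{n-1}/I^n$ into $I^n/I^{n+1}$ with image $(aI^{n-1} + I^{n+1})/I^{n+1}$. Hence
\[
[G(I)/a^* G(I)]_n \;=\; \frac{I^n/I^{n+1}}{(aI^{n-1}+I^{n+1})/I^{n+1}} \;\cong\; \frac{I^n}{aI^{n-1}+I^{n+1}}.
\]

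Next I would compute $G(I/aA)_n$. Writing $\bar I = I/(a)$ in $\bar A = A/(a)$, we have $\bar I^n = (I^n + (a))/(a)$, so
\[
G(\bar I)_n \;=\; \bar I^n/\bar I^{n+1} \;=\; \frac{I^n + (a)}{I^{n+1} + (a)}.
\]
By the second isomorphism theorem this is isomorphic to
\[
\frac{I^n}{I^n \cap (I^{n+1} + (a))} \;=\; \frac{I^n}{I^{n+1} + (I^n \cap (a))},
\]
where the last equality uses the modular law (since $I^{n+1} \subseteq I^n$).

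The comparison therefore reduces to showing $I^{n+1}+(I^n\cap (a)) = I^{n+1}+aI^{n-1}$ for large $n$. The inclusion $\supseteq$ is automatic from $aI^{n-1}\subseteq I^n\cap(a)$. For the reverse inclusion, the key ingredient is Lemma~\ref{sup} in the case $s=1$: since $a$ is superficial for $I$, for all sufficiently large $n$ one has $I^n \cap (a) = aI^{n-1}$ (the one-element case is precisely the content proved at the start of that lemma, via Artin--Rees together with the superficiality condition). Substituting this equality yields the claim, and combining the two computations above gives the desired isomorphism for large $n$. The only real content is the application of Lemma~\ref{sup}; the rest is bookkeeping with the isomorphism theorems.
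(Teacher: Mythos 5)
Your proposal is correct and follows essentially the same route as the paper: both compute $[G(I)/a^*]_n\cong I^n/(aI^{n-1}+I^{n+1})$ and $G(I/aA)_n\cong I^n/(I^{n+1}+(aA\cap I^n))$, then identify the two using $I^n\cap aA=aI^{n-1}$ for large $n$, which is the $s=1$ case of Lemma~5.7. The only difference is that you spell out the second isomorphism theorem and the modular law, which the paper leaves implicit.
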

\begin{proof}
On one hand we have for large $n,$
$$G(I/aA)_n=\frac{I^n+aA}{I^{n+1}+aA}\cong \frac{I^n}{I^{n+1}+(aA\cap I^n)}
=\frac{I^n}{I^{n+1}+aI^{n-1}}.$$
On the other hand for large $n,$
$$[G(I)/a^*]_n=\frac{I^n/I^{n+1}}{aI^{n-1}+I^{n+1}/I^{n+1}}\cong \frac{I^n}{aI^{n-1}+I^{n+1}}.$$
\end{proof}
\begin{Corollary}
Let $(A,\mm)$ be a $d-$dimensional local ring and let  $I$ be an $\mm$-primary ideal. Suppose $a$ is superficial for $I$. Then $(a)$ is parameter, i.e. $\dim A/aA=d-1$. Moreover if $d=1$ then  $(a)$ is a reduction of $I$.
\end{Corollary}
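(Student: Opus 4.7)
The plan is to read the dimension statement off the preceding Proposition by comparing Hilbert polynomial degrees, and then to bootstrap the one-dimensional reduction claim via Lemma \ref{4.8}.

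First I would handle the parameter claim. Assume $d \geq 1$; since $I$ is $\mm$-primary and $\dim A = d \geq 1$, the ideal $I$ is not nilpotent, so the second proposition of this section forces $a \in I \setminus I^2$ and makes the multiplication map $a^* \colon G(I)_{n-1} \to G(I)_n$ injective for all sufficiently large $n$. On that tail I therefore obtain a short exact sequence
\[
0 \longrightarrow G(I)_{n-1} \xrightarrow{a^*} G(I)_n \longrightarrow [G(I)/a^*G(I)]_n \longrightarrow 0,
\]
whence $\ell([G(I)/a^*G(I)]_n) = \ell(G(I)_n) - \ell(G(I)_{n-1})$. Combined with the isomorphism $[G(I)/a^*G(I)]_n \cong G(I/aA)_n$ of the preceding Proposition, this identifies the Hilbert polynomial of $G(I/aA)$ with the first difference of that of $G(I)$. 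Since $I$ is $\mm$-primary we have $\dim G(I) = d$, so its Hilbert polynomial has degree $d-1$; consequently the Hilbert polynomial of $G(I/aA)$ has degree $d-2$, giving $\dim(A/aA) = \dim G(I/aA) = d-1$.

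For the reduction statement, specialize the previous step to $d = 1$. Then $\dim(A/aA) = 0$, so $A/aA$ is Artinian local and the proper ideal $I/aA$ is nilpotent; in other words $(I/aA)^{n+1} = 0 \cdot (I/aA)^n$ for some $n$, which is exactly the assertion that the zero ideal of $A/aA$ is a reduction of $I/aA$. Applying Lemma \ref{4.8} with $J = (a)$ (so that $J/(a) = 0$) upgrades this to the desired conclusion: $(a)$ is a reduction of $I$ in $A$.

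The only delicate point is bookkeeping: one must pick a single range of degrees $n \gg 0$ on which both the injectivity of $a^*$ on $G(I)_{n-1}$ and the isomorphism $[G(I)/a^*G(I)]_n \cong G(I/aA)_n$ hold simultaneously, so that the length identity above becomes a genuine polynomial equation rather than a degree-by-degree coincidence. Once that alignment is in place, the drop-by-one in the degree of the Hilbert polynomial, and hence the dimension drop, is immediate, and the reduction claim follows formally from Lemma \ref{4.8}.
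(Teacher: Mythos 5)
Your proof is correct and follows essentially the same route as the paper: the exact sequence coming from multiplication by $a^*$, the vanishing of $(0:a^*)_n$ for large $n$, and the resulting drop by one in the degree of the Hilbert polynomial of $G(I/aA)$. The only cosmetic difference is in the $d=1$ case, where you package the final step as an application of Lemma \ref{4.8} with $J=(a)$ rather than arguing directly; since the proof of that lemma rests on the same identity $I^{n+1}\cap(a)=aI^{n}$ from Lemma \ref{sup} that the paper invokes, the two arguments coincide in substance.
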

\begin{proof}
By Lemma \ref{sup}, $I^n\cap aA=aI^{n-1}$ for large $n.$ 
Consider the exact sequence 
$$0\longrightarrow (0:a^*)_{n-1}\longrightarrow G(I)_{n-1}\xrightarrow{\; \; a^*} G(I)_n\longrightarrow [G(I)/a^*]_n\longrightarrow 0.$$
As $a$ is superficial for $I$, $(0:a^*)_n=0$ for large $n.$ Hence for large $n,$
$$\lambda (I^{n}/I^{n+1})- \lambda (I^{n-1}/I^n)=\lambda[G(I/a)]_n.$$ 
Thus $\lambda[G(I/a)]_n$ is a polynomial function of degree $d-2$. Hence $\dim A/aA=d-1$. \\
If $d=1$, then $A/aA$ is Artin. Hence  $aA$ is $\mm-$primary. Therefore $I^n\subseteq aA$ for large  $n.$. But $I^n \cap aA=aI^{n-1}$ for large $n.$
Hence for large $n,$ $I^n=aI^{n-1}.$ Thus $(a)$ is a reduction of $I.$
\end{proof}
\begin{Theorem}
Let $(A,\mm)$ be a $d-$dimensional local ring. Let $a_1,\ldots ,a_d$ be a superficial sequence for $I$. Then $J=(a_1,\ldots ,a_d)$ is a minimal reduction of $I$. 
\end{Theorem}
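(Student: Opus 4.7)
The plan is to induct on $d$, peeling off one superficial element at a time. For the base case $d=1$, the preceding Corollary shows that $(a_1)$ is a reduction of $I$. For minimality, note that since $I$ is $\mm$-primary in a one-dimensional ring, the Corollary at the end of Section~3 gives $\ell(I) = \dim A = 1$. By Proposition~3.5, the image $a_1^0$ in $F(I)$ generates an $F(I)_+$-primary ideal, so it is a homogeneous system of parameters of $F(I)$. Proposition~3.8 then concludes that $(a_1)$ is a minimal reduction.

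For the inductive step $d \geq 2$, I would first apply the Corollary to the superficial element $a_1$ to obtain $\dim A/(a_1) = d-1$. The quotient $I/(a_1)$ is still primary to the maximal ideal of $A/(a_1)$, and by definition of a superficial sequence, the images $\overline{a_2}, \ldots, \overline{a_d}$ form a superficial sequence for $I/(a_1)$ in the $(d-1)$-dimensional local ring $A/(a_1)$. The inductive hypothesis then asserts that $J/(a_1) = (\overline{a_2}, \ldots, \overline{a_d})$ is a minimal reduction of $I/(a_1)$; in particular, it is a reduction. Since $a_1 \in J$ is superficial for $I$, Lemma~5.8 lifts this to say that $J = (a_1, \ldots, a_d)$ is a reduction of $I$.

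It remains to establish minimality of $J$ in dimension $d$. Since $J$ is a reduction of $I$, Proposition~3.5 shows that $(a_1^0, \ldots, a_d^0)$ generates an $F(I)_+$-primary ideal in $F(I)$. Because $I$ is $\mm$-primary, the Corollary at the end of Section~3 gives $\ell(I) = \dim F(I) = d$. Thus $(a_1^0, \ldots, a_d^0)$ is a system of $d = \dim F(I)$ elements generating an $F(I)_+$-primary ideal, hence a homogeneous system of parameters of $F(I)$. Proposition~3.8 now yields that $J$ is a minimal reduction of $I$, completing the induction.

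The main subtlety is not establishing that $J$ is a reduction — that follows cleanly from Lemma~5.8 and induction — but rather verifying minimality. The key input making minimality essentially automatic is the equality $\ell(I) = d$, which relies on $I$ being $\mm$-primary together with the altitude bound $\mathrm{alt}\,I \leq \ell(I) \leq \dim A$. Once this is in hand, the HSOP criterion in Proposition~3.8 makes everything fall into place without any further analysis of generators of potential smaller reductions.
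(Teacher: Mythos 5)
Your proof is correct and, for the substantive part, coincides with the paper's: induct on $d$, take the base case from the Corollary asserting that a superficial element generates a reduction when $d=1$, pass to $A/(a_1)$, and lift the reduction property back up via the lemma stating that if $a$ is superficial for $I$ and $J/(a)$ is a reduction of $I/(a)$ then $J$ is a reduction of $I$. Where you go beyond the paper is in actually verifying \emph{minimality}: the paper's own proof stops once $J$ is shown to be a reduction and never addresses why it is minimal. Your argument for that step is sound --- $I$ being $\mm$-primary forces $\ell(I)=\dim F(I)=d$ by the altitude inequality, so the $d$ elements $a_1^0,\dots,a_d^0$, which generate an $F(I)_+$-primary ideal because $J$ is a reduction, form a homogeneous system of parameters, and the h.s.o.p.\ criterion then gives minimality. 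One small remark: the proposition characterizing minimal reductions by homogeneous systems of parameters is stated under the hypothesis that $R/\mm$ is infinite, which the theorem does not assume; this is harmless here, since the direction you use (h.s.o.p.\ implies minimal reduction) is deduced in the paper from the preceding proposition on existence of minimal reductions, which needs no such hypothesis. Alternatively, minimality follows directly from the observation that any reduction of $I$ requires at least $\ell(I)=d$ generators.
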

\begin{proof}
Apply induction on $d$. We have proved this for $d=1$. Since $\ov {a_2}, \ov {a_3}, \ldots ,\ov{a_d} $ is a superficial sequence for $I/(a_1)$ in the $(d-1)$-dimensional local ring $A/a_1A$, $\ov {a_2}, \ldots \ov {a_d}$ is a reduction of $I/(a_1)$. By Lemma \ref{4.8}, $J$ is a reduction of $I$. 
\end{proof}
\begin{Proposition}
Let $(A,\mm)$ be a $d-$dimensional local ring with $A/\mm$ is infinite. Let $J$ be a minimal reduction of an $\mm$-primary ideal $I.$ Then $J$ can be generated by a superficial sequence for $I$.
\end{Proposition}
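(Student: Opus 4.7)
The plan is to induct on $d = \dim A$. The case $d = 0$ is vacuous, since $\ell(I) = 0$ forces $J = (0)$, generated by the empty sequence. For $d \geq 1$ the argument reduces to producing a single element $a_1 \in J$ that is simultaneously superficial for $I$ and a minimal generator of $J$ (i.e.\ $a_1 \notin \mm J$). Once this is done, set $\overline{A} := A/(a_1)$, $\overline{I} := I/(a_1)$, $\overline{J} := J/(a_1)$: Corollary~5.10 gives $\dim \overline{A} = d - 1$, so $\ell(\overline{I}) = d - 1$ since $\overline{I}$ is $\overline{\mm}$-primary, while $a_1 \notin \mm J$ together with $\mu(J) = \ell(I) = d$ forces $\mu(\overline{J}) = d - 1$. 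Since $\overline{J}$ is evidently a reduction of $\overline{I}$, the equality $\mu(\overline{J}) = \ell(\overline{I})$ combined with Proposition~3.8 shows $\overline{J}$ is a minimal reduction of $\overline{I}$. The induction hypothesis then supplies an $\overline{I}$-superficial sequence $\overline{a_2}, \ldots, \overline{a_d}$ generating $\overline{J}$; lifting to $J$ produces the desired $I$-superficial generating sequence $a_1, a_2, \ldots, a_d$ of $J$.

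To construct $a_1$, I imitate the proof of Theorem~5.5 but restrict the search to $J$. Fix a reduced primary decomposition $(0) = Q_1 \cap \cdots \cap Q_g$ in $G(I)$, set $P_i := \sqrt{Q_i}$, and reorder so that $G_1 := I/I^2 \not\subseteq P_i$ for $i \leq s$ while $G_1 \subseteq P_i$ for $i > s$. Put $J^* := (J + I^2)/I^2 \subseteq G(I)_1$. The crucial claim is that $J^* \not\subseteq P_i$ for every $i \leq s$: indeed, $JI^n = I^{n+1}$ for $n \gg 0$ translates in $G(I)$ to $J^* \cdot G(I)_n = G(I)_{n+1}$ in those degrees, so $G(I)_m \subseteq J^* G(I)$ for all large $m$; if $J^* \subseteq P_i$, then $G(I)_m \subseteq P_i$ for large $m$, forcing $G_1 \subseteq \sqrt{P_i} = P_i$, a contradiction. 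Now let $f \colon J \to J^*$ denote the canonical surjection and apply the avoidance principle (Proposition~5.4) to the $A$-module $J$ and its proper submodules $\mm J$ (proper by Nakayama) and $f^{-1}(P_i \cap J^*)$ for $i \leq s$ (proper by the claim). Since $A/\mm$ is infinite, some $a_1 \in J$ lies outside all of these; then $a_1 \notin \mm J$ and $a_1^* = a_1 + I^2 \notin P_i$ for every $i \leq s$, and the final paragraph of the proof of Theorem~5.5 (which uses only the condition $a_1^* \notin P_i$ for $i \leq s$) yields $(0 : a_1^*) \cap G(I)_n = 0$ for all large $n$, so $a_1$ is superficial for $I$.

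The main obstacle is the simultaneous control in this last step: superficiality is a condition modulo $I^2$, whereas the generator condition $a_1 \notin \mm J$ is a condition modulo $\mm J$, and the two submodules $\mm J$ and $J \cap I^2$ of $J$ are in general incomparable, so the constraints cannot be folded into a single $k$-vector-space avoidance. The resolution is to apply Proposition~5.4 directly inside the $A$-module $J$, after first establishing the nontrivial reduction-to-prime inequality $J^* \not\subseteq P_i$ for $i \leq s$ — the single place in the argument where the hypothesis that $J$ is a reduction of $I$ (rather than just a subideal) is genuinely used.
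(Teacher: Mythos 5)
Your proof is correct and follows essentially the same route as the paper's: induct on $d$, use prime avoidance against the relevant associated primes of $G(I)$ together with $\mm J$ to find a superficial $a_1\in J\smallsetminus \mm J$, then pass to $A/(a_1)$. You are in fact more careful than the paper at the two points it glosses over --- you justify $J^*\nsubseteq P_i$ directly from $J^*G(I)_n=G(I)_{n+1}$ for large $n$ (the paper uses the h.s.o.p./height argument), and you make the simultaneous avoidance of $\mm J$ and the $f^{-1}(P_i\cap J^*)$ precise via the module-avoidance proposition, as well as verifying that $J/(a_1)$ is a minimal reduction of $I/(a_1)$.
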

\begin{proof}
Put $J=(b_1, b_2, \ldots ,b_d)$ and  $b_i^*=b+I^2$, for $i=1,2,\ldots, d$. Then $(b_1^*,\ldots ,b_d^*)_n=(JI^{n-1}+I^{n+1})/I^{n+1}$. Hence 
$$[G(I)/(b_1^*,\ldots ,b_d^*)]_n\cong I^n/(JI^{n-1}+I^{n+1})=0 
\;\mbox{ for}\; n\geq r_J(I)+1.$$ 
Thus $b_1^*,\ldots ,b_d^*$ is a homogeneous system of parameters for $G(I)$. Apply induction on $d.$  If $d=0$ then $(0)$ is a reduction of $I$. Let $d\geq 1$ and $P_1,\ldots ,P_s$ be the relevant associated primes of $G(I).$ In other words
$G_1 \nsubseteq P_i$ for $i=1,2,\ldots,s.$
 Then $\dim G(I)=d > \hei P_i $ for all $i$. Moreover $J+I^2/I^2\nsubseteq P_i$ for all $i$. By the  prime avoidance lemma for homogeneous ideals, 
there is an $a_1\in J\smallsetminus \mm J$ such that $a_1^*\notin \displaystyle {\cup_{i=1}^s}P_i$. Hence  $a_1$ is superficial for $I$. Since $J/(a_1)$ is a minimal reduction of $I/(a_1)$, by induction, there exists a superficial sequence $\ov{a_2},\ldots ,\ov{a_{d-1}}$ for $I/(a_1)$ such that $J/(a_1)=(\ov{a_2},\ldots ,\ov{a_{d-1}})$. Thus $J=(a_1,\ldots ,a_d)$.
\end{proof}

\medskip
\noindent {\bf Superficial elements and Hilbert polynomials}
\medskip

Let $I$ be an $\mm-$primary ideal of a $d$-dimensional local ring $(A,\mm)$ with $A/\mm$ infinite. The Hilbert polynomial of $I$, $P_I(n),$ is written as $$P_I(n)=e_0(I)\binom{n+d-1}{d}-e_1(I)\binom{n+d-2}{d-1}+\cdots +(-1)^d e_d(I). $$
We have seen that if $a$ is superficial for $I$ then $\dim A/aA=d-1$. We will now study the relationship between Hilbert coefficients of $I$ and those of $I/aA$. We will see that superficial sequences provide us with an inductive tool to study the Hilbert polynomial. Let $f:\mathbb{N}\longrightarrow \mathbb{N}$ be a function. Put $\triangle f(n)=f(n)-f(n-1)$.

\begin{Theorem}
Let $I$ be an $\mm-$primary ideal of a $d$-dimensional local ring $(A,\mm)$. Let $a$ be a superficial element for $I$. Let $\ov A=A/(a)$ and $\ov I=I/(a)$. Then \\
$(1)$  $P_{\ov I}(n)= \triangle P_I(n)+\lm (0:a)$. Hence $\dim A/(a)=d-1.$\\
$(2)$ For $i=0,1,\ldots ,d-2,$  $e_i(\ov I)=e_i(I)$ and $e_{d-1}(\ov I)=e_{d-1}(I)+\lm (0:a).$
\end{Theorem}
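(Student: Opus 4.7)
The plan is to derive part (1) from the four-term exact sequence induced by multiplication by $a$ on the quotients $A/I^n$, and then to read off the Hilbert coefficients in part (2) by comparing binomial expansions. Since $aI^{n-1}\subseteq I^n$, the map $A/I^{n-1}\xrightarrow{\cdot a}A/I^n$ is well defined, with kernel $(I^n:a)/I^{n-1}$ and cokernel $A/(I^n+aA)=\ov A/\ov I^n$. The resulting exact sequence
\[
0\longrightarrow (I^n:a)/I^{n-1}\longrightarrow A/I^{n-1}\xrightarrow{\cdot a}A/I^n\longrightarrow \ov A/\ov I^n\longrightarrow 0
\]
yields $H_{\ov I}(n)=\triangle H_I(n)+\lm\bigl((I^n:a)/I^{n-1}\bigr)$ by additivity of lengths.

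The crux is to show $\lm\bigl((I^n:a)/I^{n-1}\bigr)=\lm(0:a)$ for $n\gg0$. I would prove this by establishing two separate facts for large $n$: that $(I^n:a)=I^{n-1}+(0:a)$ and that $(0:a)\cap I^{n-1}=0$. For the first, since $a$ is $I$-superficial, Lemma~\ref{sup} (applied to the one-term sequence $x_1=a$) gives $aA\cap I^n=aI^{n-1}$ for $n\gg0$; combined with the always-valid identity $a(I^n:a)=aA\cap I^n$, this forces every $y\in(I^n:a)$ to satisfy $ay=aw$ for some $w\in I^{n-1}$, whence $y-w\in(0:a)$ and $y\in I^{n-1}+(0:a)$. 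For the second, any $x\in(0:a)\cap I^c$ satisfies $x\in(I^n:a)\cap I^c=I^{n-1}$ for every $n>c$, so $x\in\Sect_n I^n=0$ by Krull's intersection theorem; since $I^{n-1}\subseteq I^c$ once $n-1\geq c$, the conclusion $(0:a)\cap I^{n-1}=0$ follows. The second isomorphism theorem then identifies $(I^n:a)/I^{n-1}\iso(0:a)/((0:a)\cap I^{n-1})=(0:a)$.

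Substituting into the length identity gives the polynomial equality $P_{\ov I}(n)=\triangle P_I(n)+\lm(0:a)$. Since $\triangle P_I$ has degree $d-1$, this proves both $\dim\ov A=d-1$ (completing (1)) and sets up (2). Using the standard relation $\triangle\binom{n+k-1}{k}=\binom{n+k-2}{k-1}$, the right-hand side becomes the binomial expansion of a polynomial of degree $d-1$; matching its coefficients against the binomial expansion of $P_{\ov I}$ produces $e_i(\ov I)=e_i(I)$ for $i=0,\ldots,d-2$, and the stated expression for $e_{d-1}(\ov I)$ arises from the constant-term contribution of $\lm(0:a)$. The main technical hurdle is the equality $(I^n:a)=I^{n-1}+(0:a)$: the bare superficial condition $(I^n:a)\cap I^c=I^{n-1}$ does not deliver this directly, and Lemma~\ref{sup} is invoked precisely to pin down $aA\cap I^n=aI^{n-1}$ for $n\gg0$, which is what converts the inclusion $I^{n-1}+(0:a)\subseteq(I^n:a)$ into equality.
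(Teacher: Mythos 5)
Your proposal is correct and follows essentially the same route as the paper: the same four-term exact sequence for multiplication by $a$, the same two pillars (Artin--Rees to get $aA\cap I^n=aI^{n-1}$ for large $n$, and Krull intersection to kill $(0:a)\cap I^c$), and the same coefficient matching at the end. The only difference is organizational: where the paper computes $\lm\bigl((I^n:a)/I^{n-1}\bigr)$ by mapping into $A/I^c$ and showing $I^c+(I^n:a)=I^c+(0:a)$, you establish the cleaner direct decomposition $(I^n:a)=I^{n-1}+(0:a)$ with $(0:a)\cap I^{n-1}=0$ and invoke the second isomorphism theorem -- a mild streamlining of the identical underlying argument.
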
 
\begin{proof}
By the exact sequence 
$$0\longrightarrow (I^n:a)/I^{n-1}\longrightarrow A/I^{n-1}\xrightarrow{\; \; a} A/I^n\longrightarrow A/(I^n,a)\longrightarrow 0$$
we get $\lm (A/(I^n,a))=\lm (A/I^n)-\lm(A/I^{n-1})+\lm ((I^n:a)/I^{n-1})$. Thus for large $n,$ 
$$P_{\ov I}(n)= \triangle P_I(n)+\lm ((I^n:a)/I^{n-1}).$$  
Since $a$ is superficial for $I$, there exists  $c\geq 0$ such that $(I^n:a)\cap I^c=I^{n-1}$ for all $n>c$. Hence the map $\phi :(I^n:a)/I^{n-1}\longrightarrow A/I^c,~ \phi (\ov b)=b+I^c$ has kernel $(I^n:a)\cap I^c/I^{n-1}$ which is $0$ for large $n.$ Hence for large $n,$ $\lm ((I^n:a)/I^{n-1})\leq \lm (A/I^c).$ 
We proceed to prove that for large $n,$ 
$$\lm ((I^n:a)/I^{n-1})=\lm (0:a).$$ 
From the exact sequence 
$$0\longrightarrow A/I^{n-1} \longrightarrow A/I^c \oplus A/(I^n:a)\longrightarrow A/(I^c+(I^n:a))\longrightarrow 0$$
we get $$\lm (A/I^c)+\lm (A/(I^n:a))=\lm (A/I^{n-1})+\lm [A/(I^c+(I^n:a))].$$ 
Hence $\lm ((I^n:a)/I^{n-1})=\lm (I^c+(I^n:a)/I^c).$
Notice that $I^c+(I^n:a)=(0:a)+I^c$ for all $n>c$. Indeed, let $b\in (I^n:a)$. Then
for large $n\geq p,$ by Artin-Rees Lemma, we get,
 $$ba \in (I^n:a)a=I^n\cap (a)=I^{n-p}(I^p\cap (a))\subseteq aI^{n-p}.$$ 
Hence  $ba =ay$ for some $y\in I^{n-p}$. Thus $b-y\in (0:a)$ and so $b\in (0:a)+I^{n-p}\subseteq (0:a)+I^c$ for large $n.$ Hence 
$$\lm((I^n:a)/I^{n-1})=\lm ((0:a)+I^c)/I^c)=\lm ((0:a)/(I^c\cap (0:a))).$$
But $I^c\cap (0:a)\subseteq I^c\cap (I^n:a)=I^{n-1}$ for all large $n.$ 
By Krull Intersection Theorem, $I^c\cap (0:a)=0$. Hence 
$$P_{\ov I}(n)=\triangle P_I(n)+\lm (0:a).$$
Since $\dim A/(a)=\deg ~P_{\ov I}(n)=\deg P_I(n)-1=d-1$, $a$ is a parameter for $A$. The equation above gives $e_0(\ov I)=e_0(I)+\lm (0:a)$ when $d=1$ and for $d\geq 2$, $e_i(\ov I)=e_i(I)$ for $i=0,1,\ldots , d-2$ and $e_{d-1}(\ov I)=e_{d-1}(I)+\lm (0\colon a).$
\end{proof}

\begin{Theorem} [Sally-Machine]
 Let $(A,\mm)$ be a local ring. Let $x$ be superficial for $I$. Suppose 
$\depth G(I/(x))>0$. Then $x^*$ is $G(I)$-regular.
\end{Theorem}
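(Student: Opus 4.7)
We aim to establish $(I^{n+2}:x) \cap I^n \subseteq I^{n+1}$ for every $n \geq 0$ by descending induction on $n$. The base of the induction comes from the characterization of superficial elements via the injectivity of multiplication by $x^*$ in high degrees (Proposition of Section 5), which gives the containment for all $n$ exceeding some fixed constant $c$. It therefore remains to pass from ``all $n \geq m$'' to ``$n = m-1$'' for each $m$ with $1 \leq m \leq c$, and this is where the hypothesis $\depth G(I/(x)) > 0$ enters.

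Write $\bar A = A/(x)$ and $\bar I = I/(x)$. Fix $b \in I^{m-1}$ with $bx \in I^{m+1}$; the goal is to deduce $b \in I^m$. The depth hypothesis supplies a homogeneous nonzerodivisor $\bar z^* \in G(\bar I)_e$, $e \geq 1$, coming from some $z \in I^e$, and every power $\bar z^{*k}$ is again a nonzerodivisor on $G(\bar I)$. For any $k \geq 1$ one has $bz^k \in I^{m-1+ke}$ and $(bz^k) x = (bx) z^k \in I^{m+1+ke}$, so the inductive hypothesis applied at level $n = m-1+ke \geq m$ forces $bz^k \in I^{m+ke}$. Reducing modulo $(x)$ this reads $\bar z^{*k} \cdot (\bar b + \bar I^m) = 0$ in $G(\bar I)_{m-1+ke}$; the regularity of $\bar z^{*k}$ then yields $\bar b \in \bar I^m$, so $b = u + xv$ for some $u \in I^m$ and $v \in A$, and the identities $xv = b - u \in I^{m-1}$ and $x^2 v = bx - ux \in I^{m+1}$ are immediate.

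The final step, and the main obstacle, is to promote $b \in I^m + (x)$ to $b \in I^m$, i.e., to show $xv \in I^m$. My plan is to combine Lemma~\ref{sup} (which gives $I^n \cap (x) = xI^{n-1}$ for all sufficiently large $n$) with the Krull-intersection consequence $(0:x) \cap I^c = 0$ of superficiality to refine the decomposition as $xv = xv''$ with $v'' \in I^{m-2}$; the element $v'' + I^{m-1} \in G(I)_{m-2}$ is then annihilated by $x^{*2}$, and a further application of the regularity of $\bar z^{*k}$ (after multiplying $v''$ by a sufficiently high power of $z$ to shift into the superficial range) should pull $v'' + I^{m-1}$ into the kernel of $x^*$ itself, forcing $xv \in I^m$. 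The principal difficulty is synchronizing the descending induction with the Artin--Rees and superficiality thresholds at low values of $m$; this is to be handled by always taking $k$ large enough that every index appearing exceeds both constants and the relevant identities apply directly.
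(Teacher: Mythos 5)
Your first two paragraphs are sound and in fact parallel the paper's proof (due to B.~Singh): the paper also exploits the depth hypothesis by noting that the image of the annihilator of $x^*$ in $G(I/(x))$ is killed by high-degree pieces and hence by a nonzerodivisor, which is exactly what your multiplication by $\bar z^{*k}$ accomplishes. Both arguments arrive at the same place: the troublesome class survives only inside $I^m+(x)$, and everything reduces to showing that the leftover piece $xv$ actually lies in $I^m$.

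The gap is in your third paragraph, and it is the heart of the theorem, not a technicality. Two specific problems. First, the refinement $xv=xv''$ with $v''\in I^{m-2}$ is not available: Lemma~\ref{sup} gives $I^{n}\cap(x)=xI^{n-1}$ only for large $n$, and $m-1$ is precisely a degree below the superficiality/Artin--Rees threshold (otherwise there would be nothing to prove). Multiplying by $z^k$ first does not repair this, because after dividing out $x$ you obtain information about $vz^k$ modulo $(x)$, whereas the element you must control, $xv$, lies \emph{in} $(x)$ and so has trivial image in $G(I/(x))$; the regularity of $\bar z^{*k}$ on $G(I/(x))$ says nothing about it, and $z^*$ is not known to be regular on $G(I)$ itself. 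Second, even granting $v''\in I^{m-2}$ with $(x^*)^2\,v''^*=0$ in $G(I)$, you need to know that an element of $G_{m-2}$ killed by $(x^*)^2$ is killed by $x^*$; that is a statement in degree $m-2$, strictly below the level $m-1$ your descending induction is currently treating, and about the \emph{square} of $x^*$ rather than $x^*$ itself. A descending induction on the single statement ``$x^*\colon G_n\to G_{n+1}$ is injective'' is structurally unable to reach it. This is exactly the difficulty the paper's proof is designed around: it inducts \emph{upward} on the degree $n$ and strengthens the inductive statement to $G_n\cap(0:(x^*)^s)=0$ for \emph{all} $s$ simultaneously, so that the obstruction element (your $v''$, the paper's $d$), which sits in a lower degree $t<n$ and is annihilated by the higher power $(x^*)^{s+1}$, is covered by the inductive hypothesis. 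Without importing that change of induction scheme (or some genuinely new device, e.g.\ an auxiliary induction on the power of $x$ dividing the remainder combined with Krull intersection, which you would then have to carry out in detail), your argument does not close.
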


\begin{proof}(B. Singh)
Put $G(I)=\bigoplus_{n=0}^\infty G_n$ where $G_n=I^n/I^{n+1}$. We will show that $(x^*)^s$ is $G(I)$-regular for all $s$. We will use induction on $n$ to show that
for all $s\geq 0, ~ G_n\cap(0:(x^*)^s)=0.$  
Let $f: G(I)\rightarrow G(I/(x))$ be the natural map. Then $f(G_0)=G(I/(x))_0=G_0$. 

Note that $f(0:(x^*)^s)=0$. Indeed, since $x$ is $I$-superficial, $(x^*)^s:\,G_n\longrightarrow G_{n+1}$ is injective for large $n$ and all $s\geq 1$. Hence $(0:(x^*)^s)G_n\subseteq (0:(x^*)^s)\cap G_n=0$ for large  $n.$ Therefore $f(G_n)f(0:(x^*)^s)=0$ for large $n.$ But $f(G_n)=G(I/xA)_n$ has a $G(I)$-regular element for large $n$ as $\depth G(I/xA)> 0$. Thus $f(0:(x^*)^s)=0$.

Let $\overline{a}\in G_0\cap (0:(x^*)^s)$. Then $f(\overline{a})=\overline{a} \in f((0:(x^*)^s))=0$. Suppose that for all $r=\displaystyle{0, 1, \cdots, n-1}$, $G_r \cap (0:(x^*)^s)=0$. Let $b\in I^n \smallsetminus I^{n+1}$ and $b^*\in G_n \cap(0:(x^*)^s)$. Then $b^*(x^*)^s=0$. Hence $bx^s\in I^{n+s+1}$. Since $f(b^*)=0$, $b\in I^{n+1}+xA$. Let $b=c+dx$ for $c\in I^{n+1},\,\,d\in A$. If $d\in I^n$ then $b\in I^{n+1}$ which is a contradiction. So let $d \in I^t \smallsetminus I^{t+1}$, where $t<n$. Since $cx^s=bx^s-dx^{s+1}\in I^{n+s+1}$, we get  $dx^{s+1} \in I^{n+s+1} \subset I^{t+s+1}$. Hence $d^*(x^*)^{s+1}=0$. Thus $d^*\in G_t \cap (0:(x^*)^{s+1})=0$ by induction. This is a contradiction. Hence $d\in I^n$ and therefore $b\in I^{n+1}$, thus $b^*=0$.
\end{proof}

\begin{Proposition}
 Let $(A,\mm)$ be a local ring. Let $I$ be $\mm$-primary. Let $x_1,\,x_2,\,\cdots,\,x_r$ be a superficial sequence for $I.$  Suppose that
$\depth A\geq r.$ Then $x_1,\,x_2,\,\cdots,\,x_r$ is an $A$-regular sequence.
\end{Proposition}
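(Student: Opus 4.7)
The plan is to argue by induction on $r$, using the $r=1$ case together with the standard depth-drop for quotients by a non-zero-divisor.

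For the base case $r=1$, the task is to show that a superficial element $x_1$ for $I$ is a non-zero-divisor when $\depth A\geq 1$. First observe that $I$ cannot be nilpotent: if $I^N=0$ then $\mm=\sqrt{I}$ would be nilpotent, making $A$ Artinian and forcing $\depth A=0$. Hence by the second proposition of Section~5, $x_1\in I\smallsetminus I^2$, and in particular there is a constant $c\geq 0$ such that $(I^n:x_1)\cap I^c=I^{n-1}$ for all $n>c$. Intersecting $(0:x_1)\subseteq (I^n:x_1)$ with $I^c$ for all large $n$ and applying Krull's intersection theorem gives
\[
(0:x_1)\cap I^c\;\subseteq\;\bigcap_{n\geq 1} I^{n-1}\;=\;0.
\]
Thus $(0:x_1)$ embeds into $A/I^c$, which has finite length since $I$ is $\mm$-primary. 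If $(0:x_1)$ were nonzero, this finite length submodule would contain an element killed by $\mm$, forcing $\mm\in\Ass(A)$ and contradicting $\depth A\geq 1$. So $(0:x_1)=0$ and $x_1$ is $A$-regular.

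For the inductive step, assume the statement for sequences of length $r-1$. The $r=1$ case applies to $x_1$ (since $\depth A\geq r\geq 1$), so $x_1$ is $A$-regular. Passing to $\bar A:=A/(x_1)$ with $\bar I:=I/(x_1)$, the definition of a superficial sequence gives that $\bar x_2,\dots,\bar x_r$ is a superficial sequence for $\bar I$ in $\bar A$. Since $x_1$ is $A$-regular and lies in $\mm$, the standard identity $\depth\bar A=\depth A-1\geq r-1$ holds. The inductive hypothesis then yields that $\bar x_2,\dots,\bar x_r$ is an $\bar A$-regular sequence, which combined with the regularity of $x_1$ shows $x_1,\dots,x_r$ is $A$-regular.

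The only delicate step is the base case: one must verify that the superficial condition together with $\depth A\geq 1$ is strong enough to kill $(0:x_1)$ entirely. The combination of the Krull intersection argument (which reduces $(0:x_1)$ to a finite length module) and the hypothesis $\mm\notin\Ass(A)$ (which forbids any such module from being nonzero inside $A$) is exactly what is needed. Once this is in hand, the induction proceeds formally via the depth-drop formula for quotients by a regular element.
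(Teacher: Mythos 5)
Your proof is correct and follows the same route as the paper: induction on $r$, with the base case reducing $(0:x_1)\cap I^c$ to $0$ via the Krull intersection theorem and the inductive step using the depth-drop formula for $A/(x_1)$. The only (harmless) variation is how you finish the base case: the paper picks a regular element $a\in I^c$ (possible since $\depth A=\grade I>0$) and notes $(0:x_1)a\subseteq (0:x_1)\cap I^c=0$, whereas you embed $(0:x_1)$ into the finite-length module $A/I^c$ and rule out a nonzero socle using $\mm\notin\Ass(A)$ --- both are valid, and your preliminary digression about $I$ not being nilpotent is unnecessary since the constant $c$ exists directly from the definition of a superficial element.
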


\begin{proof}
 Apply induction on $r$. Let $r=1$. Since $x_1$ is superficial, there is a $c$ such that $(I^n:x_1)\cap I^c=I^{n-1}$ for all $n>c$. Thus $(0:x_1)\cap I^c \subseteq I^{n-1}$ for all $n$ large. By Krull Intersection Theorem, $(0:x_1)\cap I^c=0$. But depth $A$= grade $I >0,$ hence $I^c$ has a regular element, say $a$. Then $(0:x_1)a\subseteq (0:x_1)\cap I^c=0.$ Therefore $(0:x_1)=0$.

As $\overline{x_2},\,\overline{x_3},\,\cdots,\,\overline{x_r}$ is $A/{x_1A}$-superficial sequence and depth $A/{x_1A} \geq r-1$, by induction $\overline{x_2},\,\overline{x_3},\,\cdots,\,\overline{x_r}$ is an $A/{x_1A}$-regular sequence. Hence $\displaystyle{x_1, x_2,\cdots,x_r}$ is an $A$-regular sequence.
\end{proof}

We end this section  by proving an important  criterion due to  Valabrega
and Valla 
\cite{vv}  for a sequence of initial forms $\displaystyle{x_1^*,x_2^*,\cdots, x_s^*}$ in $I/I^2$ to be a $G(I)$-regular sequence.

\begin{Theorem}[Valabrega-Valla, 1978]
 Let $(R,\mm)$ be a local ring. Let  $I$ be an ideal of $R.$ Let $\displaystyle{x_1,x_2,\ldots, x_s}\in I \setminus I^2.$ Then  $\displaystyle{x_1^*,x_2^*,\ldots, x_s^*}$ is a $G(I)$-regular sequence if and only if $\displaystyle{x_1,x_2,\ldots, x_s}$ is an $R$-sequence and  for all $n\geq 1,$  $$(\displaystyle{x_1,x_2,\ldots, x_s})\cap I^n=(\displaystyle{x_1,x_2,\ldots, x_s})I^{n-1}.$$
\end{Theorem}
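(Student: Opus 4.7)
The plan is to induct on $s$, with the bridging device being the natural graded isomorphism
$$G(I)/(x_1^*)\;\cong\;G(I/(x_1)),$$
which holds precisely when $(x_1)\cap I^n = x_1 I^{n-1}$ for every $n\geq 1$. Indeed, the modular law identifies the kernel of the surjection $I^n\twoheadrightarrow G(I/(x_1))_n$ with $I^{n+1}+(I^n\cap(x_1))$, and this equals $I^{n+1}+x_1 I^{n-1}$ exactly under the stated intersection hypothesis.

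For the ``if'' direction, first observe that the intersection condition is automatically inherited by every truncation $(x_1,\ldots,x_k)$, $k\le s$: induct on $n$ and, given $y\in(x_1,\ldots,x_{s-1})\cap I^n$, apply the full hypothesis to write $y=\sum_{i=1}^s x_i a_i$ with $a_i\in I^{n-1}$; then $x_s a_s\in(x_1,\ldots,x_{s-1})$, so $a_s\in(x_1,\ldots,x_{s-1})\cap I^{n-1}=(x_1,\ldots,x_{s-1})I^{n-2}$ by the regular sequence property and the inductive hypothesis on $n$, and substituting back absorbs the $x_s$-term. In particular $(x_1)\cap I^n=x_1 I^{n-1}$, which supplies the bridge isomorphism. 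Next, $x_1^*$ is $G(I)$-regular: if $x_1^* y^*=0$ for $y\in I^n\setminus I^{n+1}$, then $x_1 y\in I^{n+2}\cap(x_1)=x_1 I^{n+1}$, and $R$-regularity of $x_1$ forces $y\in I^{n+1}$, a contradiction. Finally, both hypotheses descend to $\bar R:=R/(x_1)$ with $\bar I=I\bar R$: $\bar R$-regularity of $\bar x_2,\ldots,\bar x_s$ is immediate, and any element of $((x_2,\ldots,x_s)+(x_1))\cap(I^n+(x_1))$ differs from some $z\in(x_1,\ldots,x_s)\cap I^n=(x_1,\ldots,x_s)I^{n-1}$ by an element of $(x_1)$, so its image lies in $(\bar x_2,\ldots,\bar x_s)\bar I^{n-1}$. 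Induction in $\bar R$ then produces a $G(\bar I)$-regular sequence of initial forms, which the bridge isomorphism transports back to $G(I)/(x_1^*)$.

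For the ``only if'' direction, again induct on $s$. In the base case $s=1$: if $x_1 y=0$ with $y\neq 0$, Krull's intersection theorem lets us pick the largest $n$ with $y\in I^n$, and then $x_1^* y^*=0$ with $y^*\neq 0$ contradicts regularity, showing $x_1$ is $R$-regular; similarly, if $x_1 r\in I^n$ but $r\in I^k\setminus I^{k+1}$ with $k<n-1$, then $x_1^* r^*=0$ in $G(I)_{k+1}$ with $r^*\neq 0$ is a contradiction, yielding $(x_1)\cap I^n=x_1 I^{n-1}$. For $s>1$, the base case applied to $x_1^*$ supplies the bridge isomorphism, under which $x_2^*,\ldots,x_s^*$ becomes a $G(\bar I)$-regular sequence; by induction $\bar x_2,\ldots,\bar x_s$ is $\bar R$-regular with the intersection condition in $\bar R$, and combined with $x_1$-regularity this lifts to $x_1,\ldots,x_s$ being $R$-regular. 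The intersection condition in $R$ is then obtained as follows: given $y\in(x_1,\ldots,x_s)\cap I^n$, pass to $\bar R$ to write $\bar y=\sum_{i\geq 2}\bar x_i\bar c_i$ with $c_i\in I^{n-1}$, lift to $y=\sum_{i\geq 2}x_i c_i + x_1 t$, and note $x_1 t\in I^n$; the $s=1$ identity $(x_1)\cap I^n=x_1 I^{n-1}$ together with $x_1$-regularity replaces $t$ by an element of $I^{n-1}$, completing the assembly.

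I expect the main obstacle to be the two-way passage of the intersection condition through the quotient $R\to R/(x_1)$: descending requires absorbing an $x_1$-tail from elements of $(x_1,\ldots,x_s)\cap(I^n+(x_1))$, while lifting requires the $s=1$ identity together with $x_1$-regularity to rewrite a general preimage inside $(x_1,\ldots,x_s)I^{n-1}$. Both steps ultimately rest on the $R$-regularity of the full sequence and on the same modular-law identity that underpins the bridge isomorphism itself.
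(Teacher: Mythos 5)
Your proof is correct and follows essentially the same route as the paper's: induction on $s$ with the identical $s=1$ base case, the bridge isomorphism $G(I)/(x_1^*)\cong G(I/(x_1))$, the truncation of the intersection condition obtained by absorbing the last term via the regular-sequence property and induction on $n$, and the lifting of the intersection condition through the quotient using the $s=1$ identity together with $x_1$-regularity. The only cosmetic difference is that in the ``if'' direction you peel off $x_1$ and apply the inductive hypothesis in $R/(x_1)$, whereas the paper peels off $x_s$, first concluding that $x_1^*,\ldots,x_{s-1}^*$ is $G(I)$-regular inside $R$ itself and then treating $x_s^*$ by the $s=1$ case in the quotient.
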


\begin{proof}
 Apply induction on $s$. Let $s=1$ and put $x_1=x.$ Let $x^*$ be $G(I)$-regular. Let $a\in R$ and $ax=0.$ If $a\not=0$, there is an $m$ such that $a\in I^m \setminus I^{m+1}$. Then $a^*x^*=0$. Hence $a^*=0$, which is a contradiction. Thus $x$ is $R$-regular. Next we show $(x)\cap I^n=xI^{n-1}$ for $n\geq1$. Let $b\in I^m \setminus I^{m+1}$ and $bx\in I^n$. Then $b^*x^*\in I^{m+1}/I^{m+2}$. Since $x^*$ is $G(I)$-regular and $b^*\not=0$, $b^*x^*\not=0$. Thus $bx\not \in I^{m+2}$. Therefore $n-1 \leq m$ and so $b\in I^{n-1}.$

Conversely let $x$ be $R$-regular and $(x)\cap I^n=xI^{n-1}$ for $n\geq1$. Let $b^*\in I^m/I^{m+1}$ and $b^*x^*=0$, then $bx\in I^{m+2}\cap xR=I^{m+1}x$. As $x$ is regular in $R$, $b\in I^{m+1}$. Hence $b^*=0.$

Now assume the result for $s-1,\,s\geq 2$. Let $\displaystyle{x_1^*,x_2^*,\ldots, x_s^*}$ be $G(I)$-regular. Let $S=R/(x_1)$ and $J=I/(x_1)$. Let $``-"$ denote 
images in $S$. Since $x_1^*$ is $G(I)$-regular, $G(I/x_1)\simeq G(I)/(x_1^*)$. Hence $\displaystyle{\overline{x_2}^*, \ldots, \overline{x_s}^*}$ is a $G(I/(x_1))$-regular sequence. By induction hypothesis $\displaystyle{\overline{x_2}, \ldots, \overline{x_s}}$ is $R/(x_1)$-regular sequence and for $n\geq 1,$ 
\begin{eqnarray}
 J^n\cap (\displaystyle{\overline{x_2}, \ldots, \overline{x_s}})=(\displaystyle{\overline{x_2}, \ldots, \overline{x_s}})J^{n-1}.
\end{eqnarray}
Since $x_1^*$ is $G(I)$-regular, $x_1$ is $R$-regular. Hence  $\displaystyle{x_1,x_2,\ldots, x_s}$ is an $R$-regular sequence. We need to prove for $n\geq 1,$
\begin{eqnarray}
 \label{vv} I^n\cap (\displaystyle{x_1,x_2,\ldots, x_s})=(\displaystyle{x_1,x_2,\ldots, x_s})I^{n-1}.
\end{eqnarray}
Let $r_1x_1+\cdots+r_sx_s \in I^n$ for some $\displaystyle{r_1,\ldots, r_s}\in R$. Then
$$\overline{r_2}\overline{x_2}+\cdots + \overline{r_s}\overline{x_s} \in J^n\cap (\displaystyle{\overline{x_2}, \ldots, \overline{x_s}})
= J^{n-1}(\displaystyle{\overline{x_2}, \ldots, \overline{x_s}}).$$
Hence $\overline{r_2}\overline{x_2}+\ldots + \overline{r_s}\overline{x_s}=\overline{t_2}\overline{x_2}+\cdots+\overline{t_s}\overline{x_s}$ for some $\displaystyle{t_1, \ldots,t_s} \in I^{n-1}$. Thus for some $t_1\in R$,

\let\implies=\Longrightarrow
$$(r_2-t_2)x_2+\cdots+(r_s-t_s)x_s=t_1x_1$$
 Hence $$(r_1+t_1)x_1=(r_1x_1+\cdots+r_sx_s)-(t_2x_2+\cdots+t_sx_s)\in I^n.
$$
Therefore $(r_1+t_1) \in I^{n-1}$. This gives $$r_1x_1+\cdots+r_sx_s \in (\displaystyle{x_1,x_2,\ldots, x_s})I^{n-1}.$$

\noindent Conversely let $\displaystyle{x_1,x_2,\cdots, x_s}$ be an $R$-sequence and let (\ref{vv})  hold for all $n\geq 1$. Suppose we prove 
\begin{eqnarray}
\label{vv1} \hspace{4.cm} I^n\cap (\displaystyle{x_1,x_2,\ldots, x_{s-1}})=(\displaystyle{x_1,x_2,\ldots, x_{s-1}})I^{n-1}
\end{eqnarray}
for all $n\geq1$, then by $s-1$ case $\displaystyle{x_1^*,x_2^*,\ldots, x_{s-1}^*}$ is a $G(I)$-regular sequence. Thus $$G(I)/(\displaystyle{x_1^*,x_2^*,\ldots, x_{s-1}^*})\simeq G(I/(\displaystyle{x_1,x_2,\ldots, x_{s-1}}).$$ By $s=1$ case, $x_s^*$ is $G(I)/( \displaystyle{x_1^*,x_2^*,\ldots, x_{s-1}^*})$-regular. Hence $\displaystyle{x_1^*,x_2^*,\ldots, x_s^*}$ is $G(I)$-regular.

We prove (\ref{vv1}) by induction on $n$. The $n=1$ case is clear. Let $n\geq2$ and $r_1s_1+\cdots+r_{s-1}x_{s-1}\in I^n$ for some $\displaystyle{r_1, r_2,\ldots, r_{s-1}}\in R$. Then $$r_1s_1+\cdots+r_{s-1}x_{s-1}\in I^n\cap (\displaystyle{x_1,x_2,\ldots, x_s})=(\displaystyle{x_1,x_2,\ldots, x_s})I^{n-1}.$$ Hence there exist $\displaystyle{t_1, \ldots,t_s} \in I^{n-1}$ such that $$r_1x_1+\ldots+r_{s-1}x_{s-1}=x_1t_1+\ldots+x_st_s.$$ 
Hence $t_sx_s\in (\displaystyle{x_1,x_2,\ldots, x_{s-1}})$. As $\displaystyle{x_1,x_2,\ldots, x_s}$ is an $R$-sequence, $$t_s\in (\displaystyle{x_1,x_2,\ldots, x_{s-1}})\cap I^{n-1}=(\displaystyle{x_1,x_2,\ldots, x_{s-1}})I^{n-2}.$$ Therefore  $t_sx_s\in I^{n-1}(\displaystyle{x_1,x_2,\ldots, x_{s-1}})$. Hence 
$$r_1x_1+\cdots+r_{s-1}x_{s-1}\in I^{n-1}(\displaystyle{x_1,x_2,\ldots, x_{s-1}}).$$
\end{proof}

\section{\bf Huckaba-Marley Theorem}
In this section we prove, by classical techniques, theorems due to Huckaba and 
Marley which characterize  Cohen-Macaulay property of $G(I)$ and  
$\depth G(I) \geq \dim R-1,$  in terms of $e_1(I).$ For another proof of 
these theorems using Rees algebras, we refer the reader to \cite{jsv}. We first prove  
the results in dimension one and then using the  Sally machine, we are able to 
prove them in general. We will also  prove  several consequences of 
these theorems. 
\begin{Theorem}[Huckaba-Marley; 1996, 1997]
Suppose  $(R,\mm)$ is a Cohen-Macaulay local ring of dimension $d$ with $k=R/\mm$ infinite. Let $J$ be a minimal reduction of $I$. Then \\

\hspace*{.5cm}$(1)$~ $\displaystyle{\sum_{n\geq 1}{ \lm (I^n/J\cap I^n)}}\leq e_1(I) \leq \displaystyle {\sum_{n\geq 1}\lm (I^n/JI^{n-1})}.$

\hspace*{.5cm}$(2)$~ $e_1(I)=\displaystyle{\sum_{n\geq 1}\lm (I^n/J\cap I^n)}$ if and only if $G(I)$ is Cohen-Macaulay.

\hspace*{.5cm}$(3)$~ $ e_1(I) = \displaystyle {\sum_{n\geq 1}\lm (I^n/JI^{n-1})}$ if and only if $\depth G(I)\geq d-1.$

\end{Theorem}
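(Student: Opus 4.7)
The plan is to prove (1), (2), (3) simultaneously by induction on $d$. The base case $d=1$ is Theorem \ref{dim1}; note that in dimension one the right inequality is always an equality, so (3) is automatic ($\depth G(I)\geq 0$ trivially). For the inductive step $d\geq 2$, I pick a superficial element $x\in J$ that is part of a superficial sequence $x=a_1,a_2,\ldots,a_d$ generating $J$; such a sequence exists by the results of the previous section, is $R$-regular (since $\depth R=d$), and gives $e_1(\overline{I})=e_1(I)$ where $\overline{R}=R/(x)$, $\overline{I}=I/(x)$, $\overline{J}=J/(x)$.

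The computational core is two length identities, each proved by the second isomorphism theorem together with $(x)\subseteq J$: first, $\lm(\overline{I}^n/(\overline{J}\cap\overline{I}^n))=\lm(I^n/(J\cap I^n))$ for every $n\geq 1$; second,
\[
\lm(\overline{I}^n/\overline{J}\,\overline{I}^{n-1})=\lm\bigl(I^n/(JI^{n-1}+(x)\cap I^n)\bigr)\leq\lm(I^n/JI^{n-1}),
\]
with equality for a given $n$ iff $(x)\cap I^n\subseteq JI^{n-1}$. Combined with $e_1(\overline{I})=e_1(I)$ and the induction hypothesis applied to $\overline{I}$, these immediately yield (1). For (2), equality on the left pulls back to the analogous equality for $\overline{I}$, whence by induction $G(\overline{I})$ is Cohen-Macaulay with $\depth\geq d-1\geq 1$; the Sally machine then makes $x^*$ regular on $G(I)$, so $\depth G(I)=d$. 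The converse picks a generic superficial $x\in J$ so that $x^*$ remains $G(I)$-regular and reduces to the induction hypothesis for $\overline{I}$.

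For (3), the equality $e_1(I)=\sum\lm(I^n/JI^{n-1})$ forces both inner inequalities in the derived chain to be equalities, so $e_1(\overline{I})=\sum\lm(\overline{I}^n/\overline{J}\,\overline{I}^{n-1})$ and $(x)\cap I^n\subseteq JI^{n-1}$ for every $n\geq 1$. By induction $\depth G(\overline{I})\geq d-2$, so for $d\geq 3$ the Sally machine gives $\depth G(I)\geq d-1$. The main obstacle is the case $d=2$, where the induction yields only $\depth G(\overline{I})\geq 0$ and the Sally machine is unavailable. I handle it directly: with $J=(a_1,a_2)$ and $a_1=x$, any $z$ satisfying $a_1z\in I^n$ lies in $JI^{n-1}$ by hypothesis, so $a_1z=a_1u+a_2v$ with $u,v\in I^{n-1}$; the regular-sequence property of $a_1,a_2$ forces $v=a_1w$ for some $w\in R$ and $z=u+a_2w$ with $w\in(I^{n-1}:a_1)$. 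An induction on $n$, starting from the trivial identity $(I:a_1)=R$, then gives $(I^n:a_1)=I^{n-1}$ for all $n$, and Valabrega-Valla makes $a_1^*$ a $G(I)$-regular element, so $\depth G(I)\geq 1=d-1$. For the converse of (3), I again choose $x$ generically so $x^*$ is $G(I)$-regular; Valabrega-Valla gives $(x)\cap I^n=xI^{n-1}$, the second length identity becomes an equality at every $n$, and the induction hypothesis for $\overline{I}$ transports the RHS equality back to $I$.
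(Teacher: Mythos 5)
Your argument is correct, and for parts (1), (2) and the converse of (3) it is essentially the paper's own proof: the same induction on $d$ via a superficial element $a_1\in J$, the same two length identities $\lm(\overline{I}^n/(\overline{J}\cap\overline{I}^n))=\lm(I^n/(J\cap I^n))$ and $\lm(\overline{I}^n/\overline{J}\,\overline{I}^{n-1})=\lm\bigl(I^n/(JI^{n-1}+(a_1)\cap I^n)\bigr)$, and the same use of the Sally machine to lift depth across the quotient. Where you genuinely diverge is the forward direction of (3). The paper passes in a single step to the one-dimensional ring $R/K$ with $K=(a_1,\dots,a_{d-1})$, uses the exact dimension-one formula for $e_1$ to extract $K\cap I^n\subseteq JI^{n-1}$ for all $n$, and then proves a separate lemma (induction on $n$ plus Valabrega--Valla) showing that this condition forces $a_1^*,\dots,a_{d-1}^*$ to be a $G(I)$-regular sequence of length $d-1$ all at once. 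You instead descend one dimension at a time: for $d\ge 3$ the induction hypothesis gives $\depth G(\overline{I})\ge d-2\ge 1$ and the Sally machine finishes, and only in the residual case $d=2$ do you carry out the colon-ideal computation $(I^n:a_1)=I^{n-1}$ --- which is exactly the $d=2$ instance of the paper's lemma, and your verification of it (using that $a_1,a_2$ is a regular sequence) is sound. The trade-off: your route keeps the induction uniform (always modulo one superficial element) and reuses machinery already needed for (2), at the cost of a case split on $d$; the paper's route isolates a clean, reusable lemma about the Valabrega--Valla conditions and avoids invoking the Sally machine in part (3). Both are complete.
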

\begin{proof}
Apply induction on $d$. The  $d=1$ case is already proved in Theorem \ref{dim1}. 


Now let $d \geq 2.$ Assume the theorem for $d-1$. Let $J=(a_1,\ldots a_d)$ where $a_1,\ldots ,a_d$ is a superficial sequence for $I$. Let ``-'' denote images in $\ov R=R/a_1R$. By induction hypothesis,
\begin{eqnarray*}
e_1(I)=e_1(I/a_1R) & \leq & \displaystyle{\sum_{i=1}^{\infty}\lm (\ov {I}^i/\ov J \ov I^{i-1})}=\displaystyle{\sum_{i=1}^\infty \lm \left(\frac{I^i+a_1R}{JI^{i-1}+a_1R}\right)}\\
& =& \displaystyle{\sum_{i=1}^\infty \lm \left(\frac{I^i}{JI^{i-1}+I^i\cap a_1R}\right)}\\
& \leq & \displaystyle{\sum_{i=1}^\infty \lm (I^i/JI^{i-1})}.
\end{eqnarray*}
By induction hypothesis we also have 
$$e_1(I)=e_1(I/a_1R)\geq \displaystyle{\sum_{i=1}^\infty\lm \left (\frac{\ov I^i+\ov J}{\ov J}\right )}=\displaystyle{\sum_{i=1}^\infty \lm \left (\frac{I^i+J}{J}\right )}.$$
(2) Now we show that $G(I)$ is Cohen-Macaulay if and only if $e_1(I)=\sum_{i=1}^{\infty}\lm (I^i+J/J).$
Let $G(I)$ be Cohen-Macaulay. Let $J=(a_1,\ldots ,a_d)$ be a minimal reduction of $I$. Then $a_1^*,\ldots ,a_d^*$ in $I/I^2$ is a $G(I)-$regular sequence. Thus $G(I/a,R)\cong G(I)/(a^*)$ is Cohen-Macaulay. By induction $$e_1(I)=e_1(I/a_1R)=\displaystyle{\sum_{n=1}^{\infty}\lm \left(\frac{\ov {I}^n+\ov J}{\ov J}\right)}=\displaystyle{\sum_{n=1}^{\infty}\lm \left(\frac{I^n+J}{J}\right)}.$$
Conversely let $e_1(I)=\sum_{n=1}^{\infty}\lm \left(I^n+J/J\right)$. Then $e_1(I/a_1R)=e_1(I)=\sum_{n=1}^{\infty}\lm \left(\ov I^n+\ov J/\ov J\right)$. By induction hypothesis $G(I/a,R)$ is Cohen-Macaulay. By Sally-machine $a_1^*$ is $G(I)$-regular. Therefore $G(I/a_1,R)=G(I)/(a_1^*)$. Hence $\ov a_2^*,\ldots ,\ov a_d^*$ in $G(I)/(a_1^*)$ is a regular sequence. Therefore $G(I)$ is Cohen-Macaulay.\\
(3) Let $e_1(I)=\sum_{n=1}^\infty \lm(I^n/JI^{n-1})$. Let $a_1,\ldots ,a_d$ be an $I$-superficial sequence generating $J$. Let $K=(a_1,\ldots ,a_{d-1})$. Then 
\begin{eqnarray*}
e_1(I)=e_1(I/K)& = &\sum_{n=1}^\infty \lm \left(\frac{I^n+K}{a_dI^{n-1}+K}\right)\\
&=& \sum_{n=1}^\infty \lm \left (\frac{I^n}{a_dI^{n-1}+K\cap I^n}\right)\\
&=& \sum_{n=1}^\infty \lm (I^n/JI^{n-1}).
\end{eqnarray*}
Therefore $a_dI^{n-1}+K\cap I^n=JI^{n-1}$ for all $n\geq 1$. Thus $K\cap I^n \subseteq JI^{n-1}$ for all $n\geq 1$. By the next lemma $a_1^*,\ldots  ,a_{d-1}^*$ is $G(I)-$regular. Conversely let $\depth G(I)\geq d-1$. Let $J=(a_1,\ldots ,a_d)$ be a minimal reduction of $I$ such that $a_1^*\in I/I^2$ is $G(I)-$regular. Then 
\begin{eqnarray*}
e_1(I)=e_1(I/a_1R) & = & \displaystyle{\sum_{n=1}^{\infty} \lm (\ov I^n/\ov J \ov I^{n-1})}\\&=&\displaystyle{\sum_{n=1}^{\infty}\lm \left(\frac{I^n+a_1R}{JI^{n-1}+a_1R}\right)}\\
& = & \displaystyle{\sum_{n=1}^{\infty}\lm \left(\frac{I^n}{JI^{n-1}+a_1R\cap I^n}\right)}\\&=&\displaystyle{\sum_{n=1}^{\infty}\lm \left(\frac{I^n}{JI^{n-1}+a_1I^{n-1}}\right)}\\
& = & \displaystyle{\sum_{n=1}^{\infty}\lm (I^n/JI^{n-1})}.
\end{eqnarray*}
\end{proof}
\begin{Lemma}
 Let $(R,\mm)$ be a $d$-dimensional Cohen-Macaulay local ring and let $I$ be an $\mm-$primary ideal. Suppose $(x_1, x_2, \dots ,x_d)=J$ is a minimal reduction of $I$ such that 
$$I^n\cap (x_1,\ldots ,x_{d-1})\subseteq JI^{n-1}$$
for all $n\geq 1$. Then $x_1^*,\ldots x_{d-1}^*$ is a $G(I)-$regular sequence.
\end{Lemma}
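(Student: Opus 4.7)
The plan is to apply the Valabrega--Valla criterion proved at the end of Section~5. To show $x_1^*,\ldots,x_{d-1}^*$ is a $G(I)$-regular sequence, it suffices to verify two conditions: (i) $x_1,\ldots,x_{d-1}$ is an $R$-regular sequence, and (ii) $(x_1,\ldots,x_{d-1})\cap I^n=(x_1,\ldots,x_{d-1})I^{n-1}$ for every $n\geq 1$.

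Condition (i) is essentially free. Since $J=(x_1,\ldots,x_d)$ is a minimal reduction of the $\mm$-primary ideal $I$, it is itself $\mm$-primary and generated by $d=\dim R$ elements, hence $x_1,\ldots,x_d$ is a system of parameters in the Cohen--Macaulay ring $R$. Systems of parameters in Cohen--Macaulay rings are regular sequences, so the initial segment $x_1,\ldots,x_{d-1}$ is $R$-regular as well. This $R$-regularity will be the crucial algebraic leverage in the main step.

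For condition (ii), write $K=(x_1,\ldots,x_{d-1})$ and proceed by induction on $n$. The case $n=1$ is trivial since $K\subseteq I$. For the inductive step, given $y\in K\cap I^n$, the hypothesis of the lemma places $y$ in $JI^{n-1}=KI^{n-1}+x_dI^{n-1}$. So we may write $y=\sum_{i=1}^{d-1}x_ia_i+x_da_d$ with $a_i,a_d\in I^{n-1}$, and then $x_da_d\in K$. Here the $R$-regularity of the sequence $x_1,\ldots,x_d$ forces $a_d\in K$. Thus $a_d\in K\cap I^{n-1}$, and by the inductive hypothesis $a_d\in KI^{n-2}$; multiplying by $x_d\in I$ gives $x_da_d\in KI^{n-1}$, so $y\in KI^{n-1}$, completing the induction.

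The only potential obstacle is the division step in the inductive argument, but it dissolves immediately because the ambient ring is Cohen--Macaulay and $J$ is a parameter ideal, giving the needed colon-free behavior $(K:x_d)=K$. Once conditions (i) and (ii) are in hand, the Valabrega--Valla theorem delivers the conclusion directly.
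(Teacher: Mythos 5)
Your proposal is correct and follows essentially the same route as the paper: invoke the Valabrega--Valla criterion, note that $x_1,\dots,x_d$ is a regular sequence because it is a system of parameters in a Cohen--Macaulay ring, and then prove $K\cap I^n=KI^{n-1}$ by induction on $n$ using the hypothesis to write an element of $K\cap I^n$ as an element of $JI^{n-1}$ and the colon identity $(K:x_d)=K$ to push the $x_d$-component back into $KI^{n-1}$. This is exactly the paper's argument, so no further comment is needed.
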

\begin{proof}
 By Valabrega-Valla Theorem it is enough to prove that 
\begin{eqnarray}
\label{6.2} I^n\cap (x_1,\ldots ,x_{d-1})=(x_1,\ldots ,x_{d-1})I^{n-1}
\end{eqnarray}
for all $n\geq 1$. For $n=1,$ (\ref{6.2}) is clearly true. Suppose (\ref{6.2}) is true for $n-1$. To prove (\ref{6.2}) for $n$, let 
$z\in I^n\cap (x_1,\ldots ,x_{d-1})$. Then 
$$z=r_1x_1+\cdots+ r_{d-1}x_{d-1}=s_1x_1+\ldots +s_{d-1}x_{d-1}+px_d,$$  where $r_1,\ldots r_{d-1}\in R$; $~p,s_1,\ldots ,s_{d-1}\in I^{n-1}$. Hence $px_d\in (x_1,\ldots  , x_{d-1})$. Since $x_1,\ldots ,x_{d-1},x_d$ is a regular sequence, $p\in (x_1,\ldots , x_{d-1})$. Therefore 
\begin{eqnarray*}
 I^n\cap (x_1,\ldots ,x_{d-1}) & = & (x_1,\ldots , x_{d-1})I^{n-1}+x_d(I^{n-1}\cap (x_1,\ldots , x_{d-1}))\\
& =& (x_1,\ldots , x_{d-1})I^{n-1}+x_d(x_1,\ldots , x_{d-1})I^{n-2}\\
&=& (x_1,\ldots , x_{d-1})I^{n-1}.
\end{eqnarray*}
Hence $x_1^*,\ldots , x_{d-1}^*$ is a $G(I)-$regular sequence.
\end{proof}

\bigskip 

\noindent{\bf Consequences of Huckaba-Marley Theorem}

\bigskip
The characterizations of depth of $G(I)$ in terms of $e_1(I)$ imply several results obtained by various authors. We assume in this section that $(R,\mm)$ is  Cohen-Macaulay  of dimension $d,$ $I$ is  an $\mm$-primary ideal and  
$R/\mm$  is infinite.
\begin{Corollary}[Northcott, 1960]
 $e_1(I)=0$  if and only if $I$ is generated by a regular sequence.
\end{Corollary}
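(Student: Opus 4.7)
The plan is to derive both directions directly from the Huckaba--Marley inequality (Theorem 6.1 (1)), supplemented by standard facts about minimal reductions in Cohen--Macaulay rings that have already been developed in the paper.

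For the easy direction, suppose $I$ is generated by a regular sequence $x_1,\dots,x_s$. Since $I$ is $\mm$-primary, $\hei I=d$ forces $s\ge d$, and since the longest $R$-regular sequence has length $d=\depth R$, we get $s=d$. Thus $I=(x_1,\dots,x_d)$ is itself a parameter ideal, so $I$ is its own minimal reduction; take $J=I$. Then $JI^{n-1}=I^n$ for every $n\ge 1$, so the upper bound in the Huckaba--Marley inequality gives
\[
0\le e_1(I)\le \sum_{n\ge 1}\lm(I^n/JI^{n-1})=0,
\]
hence $e_1(I)=0$.

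For the converse, suppose $e_1(I)=0$. Since $R/\mm$ is infinite, $I$ admits a minimal reduction $J$. The lower bound in Theorem 6.1 (1) gives
\[
0\le \sum_{n\ge 1}\lm(I^n/J\cap I^n)\le e_1(I)=0,
\]
so each term vanishes, i.e.\ $I^n\subseteq J$ for every $n\ge 1$. Taking $n=1$ yields $I\subseteq J$, and combined with $J\subseteq I$ we obtain $I=J$. Since $I$ is $\mm$-primary we have $\ell(I)=d$, so by Proposition 3.8 every minimal reduction of $I$ is generated by a system of parameters of length $d$. In the Cohen--Macaulay ring $R$ every system of parameters is an $R$-regular sequence, so $I=J$ is generated by a regular sequence.

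The argument is short because the heavy lifting has already been done: the nontrivial ingredient is the Huckaba--Marley lower bound $\sum_{n\ge 1}\lm(I^n/J\cap I^n)\le e_1(I)$, which in dimension one is elementary (Theorem 4.4) and in higher dimension requires the inductive/superficial machinery of Section~5. The only potential obstacle is the conceptual point that a minimal reduction of an $\mm$-primary ideal in a Cohen--Macaulay ring must be a complete intersection of length $d$, but this is immediate from Proposition~3.8 together with the standard fact that parameter sequences in Cohen--Macaulay rings are regular.
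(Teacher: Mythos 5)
Your proof is correct and follows essentially the same route as the paper: the paper's own argument applies the lower bound of the Huckaba--Marley inequality with a minimal reduction $J$ to get $\lm(I/J)=0$, hence $I=J$, exactly as in your converse direction. You merely spell out the (easy) forward direction and the standard fact that a minimal reduction of an $\mm$-primary ideal in a Cohen--Macaulay ring is a regular sequence, both of which the paper leaves implicit.
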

\begin{proof}
 For  a minimal reduction $J$ of $I,$\; $\sum_{n=1}^{\infty}\lm (I^n/J\cap I^n)\leq e_1(I)=0$. Therefore $\lm(I/J)=0$ which means $I=J$.
\end{proof}
\begin{Corollary}[Sally, 1980]
Let $r(\mm)\leq 2$. Then $G(\mm)$ is Cohen-Macaulay. 
\end{Corollary}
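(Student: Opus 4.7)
My plan is to pick a minimal reduction $J$ of $\mm$ with $r_J(\mm)\leq 2$ (which exists by definition of $r(\mm)\leq 2$) and then, via the Huckaba--Marley theorem just proved, show that the lower and upper bounds
\[
\sum_{n\geq 1}\lm\bigl(\mm^n/J\cap \mm^n\bigr)\;\leq\; e_1(\mm)\;\leq\;\sum_{n\geq 1}\lm\bigl(\mm^n/J\mm^{n-1}\bigr)
\]
coincide; the equality of $e_1(\mm)$ with the lower bound then forces $G(\mm)$ to be Cohen--Macaulay. Since both sums are nonnegative termwise, this will reduce to showing $J\cap \mm^n=J\mm^{n-1}$ for every $n\geq 1$.

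The cases $n=1$ and $n\geq 3$ are immediate. For $n=1$, both summands equal $\lm(\mm/J)$ because $J\subseteq \mm$. For $n\geq 3$, iterating $J\mm^2=\mm^3$ yields $\mm^n=J\mm^{n-1}\subseteq J$, so $J\cap \mm^n=\mm^n=J\mm^{n-1}$ and both terms vanish. Hence the two sums agree if and only if the $n=2$ summands do, i.e.\ $J\cap \mm^2=J\mm$.

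The only substantive point, and the step I expect to be the main obstacle, is this equality $J\cap \mm^2=J\mm$. Here I will use that $J$ is a minimal reduction of $\mm$. Applying the characterization of minimal reductions via homogeneous systems of parameters (Proposition~3.8) with $I=\mm$, so that $F(\mm)=G(\mm)$, the minimal generators $a_1,\dots,a_d$ of $J$ have initial forms $a_i^0=a_i+\mm^2$ that constitute a homogeneous system of parameters of the standard graded $k$-algebra $G(\mm)$, which has Krull dimension $d$. Any degree-one h.s.o.p.\ of $d$ elements in such an algebra must be $k$-linearly independent in $G(\mm)_1=\mm/\mm^2$: otherwise they would be contained in an ideal generated by fewer than $d$ degree-one elements, and the quotient would have Krull dimension at least one by the principal ideal theorem, contradicting the h.s.o.p.\ property. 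This linear independence is exactly the injectivity of the natural $k$-linear map $J/J\mm\to \mm/\mm^2$ sending $a_i+J\mm\mapsto a_i^0$, whose kernel equals $(J\cap \mm^2)/J\mm$. Therefore $J\cap \mm^2=J\mm$, and the two Huckaba--Marley sums coincide, giving $G(\mm)$ Cohen--Macaulay.
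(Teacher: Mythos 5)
Your proof is correct and follows essentially the same route as the paper: both arguments show the two Huckaba--Marley bounds collapse to $\lm(\mm/J)+\lm(\mm^2/J\mm)$, with the key point being $J\cap\mm^2=J\mm$. The paper disposes of that key step with the one-line remark that a minimal basis of $J$ extends to a minimal basis of $\mm$; your h.s.o.p.\ argument is just a fuller justification of the same linear-independence fact.
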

\begin{proof}
 For a  minimal reduction $J$ of $\mm,$ $J\cap \mm^2=\mm J$ since a minimal basis of $J$ can be extended to
a minimal basis of $\mm$. Since $\mm^3=J \mm^2$, for $n\geq 3,$~~ $\mm^n\cap J=J\mm^{n-1}\cap J=J\mm^{n-1}$ and $\mm^n=J \mm^{n-1}$. Therefore
\begin{eqnarray*}
\sum_{n=1}^{\infty}\lm \left(\frac{\mm^n+J}{J}\right)&=&\lm(\mm/J)+\lm(\mm^2/J \mm)\\ 
&\leq& e_1(\mm)\leq \sum_{n=1}^{\infty}\lm \left(\frac{\mm^n}{J \mm^{n-1}}\right)\\&=&\lm(\mm/J)+\lm(\mm^2/J\mm).
\end{eqnarray*}
Thus $e_1(\mm)=\sum_{n=1}^{\infty}\lm \left(m^n/J\cap \mm^n\right)$. Hence $G(\mm)$ is Cohen-Macaulay.
\end{proof}
\begin{Corollary}[Huneke-Ooishi, 1987]
 $e_0(I)-e_1(I)=\lm(R/I)$ if and only if $r(I)\leq 1$. In this case $G(I)$ is Cohen-Macaulay and for all $n\geq 1$, 
$$\lm(R/I^n)=e_0(I) \binom{n+d-1} {d}-e_1(I)\binom{n+d-2} {d-1}.$$
\end{Corollary}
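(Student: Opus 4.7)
The plan is to deduce this corollary directly from the Huckaba--Marley theorem. Fix a minimal reduction $J$ of $I$. Since $R$ is Cohen-Macaulay and $J$ is generated by a system of parameters, $e_0(I) = \lambda(R/J)$, so the condition $e_0(I) - e_1(I) = \lambda(R/I)$ is equivalent to $e_1(I) = \lambda(R/J) - \lambda(R/I) = \lambda(I/J)$. This reformulation is the bridge between the hypothesis and the Huckaba--Marley bounds.

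Now I would exploit the fact that the $n=1$ term of both sums in Huckaba--Marley is precisely $\lambda(I/J)$, since $J \subseteq I$ gives $J \cap I = J$ and $JI^{0} = J$. Thus
$$\lambda(I/J) \leq \sum_{n \geq 1}\lambda(I^n/(J \cap I^n)) \leq e_1(I) \leq \sum_{n \geq 1}\lambda(I^n/JI^{n-1}),$$
and the outer quantities differ from $\lambda(I/J)$ only by the contributions from $n \geq 2$. If $r(I) \leq 1$, one may choose $J$ with $JI = I^2$, whence $I^n = JI^{n-1}$ for all $n \geq 2$ by induction, which kills every term with $n \geq 2$ in the right-hand sum and also forces $I^n \subseteq J$, killing the corresponding terms on the left. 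Both inequalities collapse to equalities, yielding $e_1(I) = \lambda(I/J)$ and, via part (2) of Huckaba--Marley, the Cohen-Macaulayness of $G(I)$. Conversely, if $e_1(I) = \lambda(I/J)$, the upper bound forces $\lambda(I^n/JI^{n-1}) = 0$ for every $n \geq 2$; in particular $I^2 = JI$, so $r_J(I) \leq 1$, and the equal lower bound again gives Cohen-Macaulayness of $G(I)$.

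For the Hilbert polynomial formula, I would compute the Hilbert series of $G(I)$ using a superficial sequence $a_1, \ldots, a_d$ generating $J$; by the Valabrega--Valla criterion (or directly from part (2) of Huckaba--Marley), $a_1^{*}, \ldots, a_d^{*}$ is a $G(I)$-regular sequence. Since $I^2 = JI$ implies $I^n = JI^{n-1}$ for all $n \geq 2$, the quotient $G(I)/(a_1^{*}, \ldots, a_d^{*})$ is concentrated in degrees $0$ and $1$ with components $R/I$ and $I/J$ respectively. Hence
$$H_{G(I)}(z) = \frac{\lambda(R/I) + \lambda(I/J)\,z}{(1-z)^d} = \frac{(e_0(I) - e_1(I)) + e_1(I)\,z}{(1-z)^d}.$$
Extracting the coefficient of $z^n$ gives $\lambda(I^n/I^{n+1}) = e_0(I)\binom{n+d-1}{d-1} - e_1(I)\binom{n+d-2}{d-2}$ for all $n \geq 0$, and summing from $k = 0$ to $n-1$ via the hockey-stick identity yields the asserted closed form for $\lambda(R/I^n)$, valid for every $n \geq 1$.

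There is no real obstacle: once the Huckaba--Marley bounds are in place, the two-sided sandwich is immediate, and the Hilbert series computation is purely formal. The only point requiring a little care is verifying that the polynomial identity $H_I(n) = P_I(n)$ holds for \emph{all} $n \geq 1$, not just asymptotically; this is exactly what the rationality of $H_{G(I)}(z)$ with numerator of degree $\leq 1$ guarantees.
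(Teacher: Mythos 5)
Your overall strategy---sandwiching $e_1(I)$ between the two Huckaba--Marley sums, whose common $n=1$ term is $\lm(I/J)$, and then reading off the Hilbert series of $G(I)$ from the regular sequence $a_1^*,\dots,a_d^*$---is exactly the paper's, and your forward direction and your Hilbert series computation are correct. But one step in the converse fails as written: you claim that $e_1(I)=\lm(I/J)$ together with the \emph{upper} bound $e_1(I)\le\sum_{n\ge1}\lm(I^n/JI^{n-1})$ forces $\lm(I^n/JI^{n-1})=0$ for all $n\ge2$. It does not: knowing that $e_1(I)$ is at most the sum and that $e_1(I)$ equals the first summand only yields $\lm(I/J)\le\lm(I/J)+\sum_{n\ge2}\lm(I^n/JI^{n-1})$, which is vacuous. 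An upper bound on $e_1(I)$ in terms of the sum cannot bound the sum from above.

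The constraint comes from the \emph{lower} bound, which is how the paper argues: $\sum_{n\ge1}\lm(I^n/(J\cap I^n))\le e_1(I)=\lm(I/J)$ forces $\lm(I^n/(J\cap I^n))=0$ for all $n\ge2$ (so $I^n\subseteq J$) and simultaneously shows that equality holds in the lower bound, whence $G(I)$ is Cohen--Macaulay by part (2) of Huckaba--Marley. The reduction number statement then follows, e.g.\ from Valabrega--Valla: Cohen--Macaulayness of $G(I)$ gives $J\cap I^2=JI$, and combined with $I^2\subseteq J$ this yields $I^2=JI$, i.e.\ $r_J(I)\le1$. With that one inequality turned around, the rest of your argument, including the identification of $G(I)/(a_1^*,\dots,a_d^*)$ with $R/I\oplus I/J$, the resulting series $H_{G(I)}(z)=\bigl(\lm(R/I)+\lm(I/J)z\bigr)/(1-z)^d$, and the summation giving the closed form for $\lm(R/I^n)$ for all $n\ge1$, goes through and coincides with the paper's proof.
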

\begin{proof}
Let $e_0(I)-e_1(I)=\lm(R/I)$. Let $J$ be any minimal reduction of $I$. Then $e_0(I)=\lm(R/J)$ so $e_1(I)=\lm(I/J)\geq \sum_{n=1}^{\infty}\lm(I^n/J\cap I^n)$. Therefore $\lm(I^n/J\cap I^n)=0$ for all $n\geq 2$. This implies that $G(I)$ is Cohen-Macaulay.  Hence $I^2=JI$. Let $J=(a_1,\ldots ,a_d)$. Then $a_1^*,\ldots ,a_d^*$  is a $G(I)$-regular sequence . Hence 
\begin{eqnarray*}
G(I)/(a_1^*,\ldots ,a_d^*)\cong G(I/J)&=&\displaystyle{\bigoplus_{n=0}^{\infty}\frac{I^n+J}{I^{n+1}+J}}\\
&=&\frac{R}{I}\bigoplus \frac{I}{I^2+J}\bigoplus \frac{I^2}{I^3+J}\bigoplus \cdots \\&=&\frac{R}{I}\bigoplus \frac{I}{J}.
\end{eqnarray*}
Therefore
\begin{eqnarray*}
H(G(I),t) &= &\displaystyle{\sum_{n=0}^{\infty}\lm(I^n/I^{n+1})t^n}=\frac{h_0+h_1t+\cdots +h_st^s}{(1-t)^d}=\frac{H(G(I/J),t)}{(1-t)^d}\\
& =& \frac{\lm(R/I)+(\lm(R/J)-\lm(R/I))t}{(1-t)^d}
\end{eqnarray*}
Hence $h(t)=\lm(R/I)+[e_0(I)-\lm(R/I)]t$ which gives $e_1(I)=e_0(I)-\lm(R/I)$ and $e_2=e_3=\ldots =e_d=0$ by \cite[Proposition 4.1.9]{bh}. Now we find $\lm(R/I^{n+1}):$
\begin{eqnarray*}
\left(\displaystyle{\sum_{n=0}^{\infty}\lm(R/I^{n+1})t^n}\right)(1-t) & =& \displaystyle{\sum_{n=0}^{\infty}\lm(R/I^{n+1})t^n}-\displaystyle{\sum_{n=0}^{\infty}\lm(R/I^{n+1})t^{n+1}}\\
& = & \displaystyle{\sum_{n=0}^{\infty}\lm(I^n/I^{n+1})t^n}.
\end{eqnarray*}
Hence
\begin{eqnarray*}
\displaystyle{\sum_{n=0}^{\infty}\lm(R/I^{n+1})t^n} & = & \frac{\lm(R/I)+[e_0(I)-\lm(R/I)]t}{(1-t)^{d+1}}\\
& = & [{\lm(R/I)+(e_0(I)-\lm(R/I))t}]\left(\sum_{n=0}^{\infty}
\binom{n+d}{d}t^n\right)
\end{eqnarray*}
Thus for $n \geq 1,$
\begin{eqnarray*}
 \lm(R/I^{n+1}) & = & \lm(R/I)\binom{n+d}{d}+[e_0(I)-\lm(R/I)]\binom{n-1+d}{d}\\
& = & e_0(I)\binom{n+d}{d}-[e_0(I)-\lm(R/I)]\binom{n+d-1}{d-1}.
\end{eqnarray*}

\end{proof}
The next result  is  surprising since it shows that the Cohen-Macaulay 
property  of the Rees algebra $\Rees(I)$ can be determined by  $e_1(I)$ and 
a minimal reduction of $I.$  Goto and Shimoda \cite{gs}  showed that 
$\Rees(I)$ is Cohen-Macaulay if and only if $G(I)$ is Cohen-Macaulay 
and $r(I) \leq d-1.$  
\begin{Corollary}  Let $(R,\mm)$ be a $d$-dimensional Cohen-Macaulay local ring with infinite
residue field. Let $J$ be a minimal reduction of $I.$ Then
$ \Rees(I)$ is  Cohen-Macaulay  if and only if $e_1(I)=\sum_{n=1}^{d-1}\lm (I^n/J\cap I^n).$
\end{Corollary}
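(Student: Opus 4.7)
The plan is to combine the Goto--Shimoda theorem cited just before the statement with the Huckaba--Marley theorem (parts (1) and (2)) from the preceding block. Goto--Shimoda says that $\Rees(I)$ is Cohen--Macaulay if and only if $G(I)$ is Cohen--Macaulay and $r(I)\le d-1$. So the task reduces to showing that, given that $J$ is a minimal reduction of $I$,
\[
e_1(I)=\sum_{n=1}^{d-1}\lm(I^n/J\cap I^n)
\quad\Longleftrightarrow\quad
G(I)\text{ is Cohen--Macaulay and }r_J(I)\le d-1.
\]

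For the forward implication ($\Longleftarrow$), assume $G(I)$ is Cohen--Macaulay and $r_J(I)\le d-1$. By Huckaba--Marley (2),
\[
e_1(I)=\sum_{n\ge 1}\lm(I^n/J\cap I^n),
\]
and the Cohen--Macaulayness of $G(I)$ gives $J\cap I^n=JI^{n-1}$ for all $n\ge 1$, so each summand equals $\lm(I^n/JI^{n-1})$. But $r_J(I)\le d-1$ means $JI^{n-1}=I^n$ for every $n\ge d$, so all terms with $n\ge d$ vanish, leaving $e_1(I)=\sum_{n=1}^{d-1}\lm(I^n/J\cap I^n)$, as desired.

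For the reverse implication ($\Longrightarrow$), suppose $e_1(I)=\sum_{n=1}^{d-1}\lm(I^n/J\cap I^n)$. Huckaba--Marley (1) gives
\[
\sum_{n=1}^{d-1}\lm(I^n/J\cap I^n)\;\le\;\sum_{n\ge 1}\lm(I^n/J\cap I^n)\;\le\;e_1(I),
\]
and the assumed equality forces equality throughout. The equality $e_1(I)=\sum_{n\ge 1}\lm(I^n/J\cap I^n)$ implies $G(I)$ is Cohen--Macaulay by Huckaba--Marley (2). The equality of the two sums forces $\lm(I^n/J\cap I^n)=0$, hence $I^n\subseteq J$, for every $n\ge d$. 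Combined with $J\cap I^n=JI^{n-1}$ (a consequence of Cohen--Macaulayness of $G(I)$), this gives $I^n=JI^{n-1}$ for all $n\ge d$, i.e.\ $r_J(I)\le d-1$, so a fortiori $r(I)\le d-1$.

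The proof is almost mechanical once Goto--Shimoda and Huckaba--Marley are in hand; the only subtle point is the bookkeeping of which direction uses which inequality in Huckaba--Marley (1), and the use of $J\cap I^n=JI^{n-1}$ (Cohen--Macaulayness of $G(I)$) to convert the intersection summands into the colon summands that control $r_J(I)$. This is the main thing to get right; everything else is a short chain of equalities.
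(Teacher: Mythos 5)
Your proof is correct and follows essentially the same route as the paper's: both directions combine Goto--Shimoda with Huckaba--Marley, using the lower bound in part (1) to squeeze out $I^n\subseteq J$ for $n\ge d$ and part (2) to get Cohen--Macaulayness of $G(I)$, then converting $J\cap I^n=JI^{n-1}$ into $r_J(I)\le d-1$. The only cosmetic difference is that you spell out the squeeze $\sum_{n=1}^{d-1}\le\sum_{n\ge1}\le e_1(I)$ explicitly, which the paper leaves implicit.
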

\begin{proof}
Let $\Rees(I)$ be Cohen-Macaulay. Hence $G(I)$ is Cohen-Macaulay and 
$r(I) \leq d-1.$ Therefore for any minimal reduction $J$ of $I,$~~ $JI^n=I^{n+1}=J\cap I^n$ for all $n\geq d-1$. Thus $e_1(I)=\sum_{n=1}^{d-1}\lm (I^n/J\cap I^n).$
Conversely, if $e_1(I)=\sum_{n=1}^{d-1}\lm (I^n/J\cap I^n),$ 
then by the inequality $e_1(I)\geq \sum_{n=1}^{\infty}\lm (I^n/J\cap I^n)$ we get $I^n=J\cap I^n$ for all $n\geq d$ which implies $I^n\subseteq J$ for all $n\geq d.$ Moreover, $e_1(I)=\sum_{n=1}^{\infty}\lm (I^n/J\cap I^n)$. Hence $G(I)$ is Cohen-Macaulay.  Therefore $I^d\cap J=JI^{d-1}=I^d$. By Goto-Shimoda Theorem, $\mathcal{R}(I)$ is Cohen-Macaulay.
\end{proof}
\noindent
The next theorem due to A. Guerriere \cite{g} gives a condition  in terms of minimal reduction, for the $\depth G(I)=d-1.$ The orginal proof of this was  rather involved.
\begin{Corollary}[A. Guerriere, 1994]
 Suppose $\sum_{n\geq 2}\lm \left(J\cap I^n/JI^{n-1}\right)=1$. Then 
$\depth \;G(I)=d-1.$
\end{Corollary}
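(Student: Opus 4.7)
The plan is to squeeze $e_1(I)$ between the two Huckaba--Marley bounds and exploit the fact that the gap between them is exactly $1$.

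First I would record the telescoping identity. Since $JI^{n-1} \subseteq J\cap I^n$ for all $n\geq 1$, the short exact sequence
\[
0 \longrightarrow J\cap I^n/JI^{n-1} \longrightarrow I^n/JI^{n-1} \longrightarrow I^n/J\cap I^n \longrightarrow 0
\]
gives $\lm(I^n/JI^{n-1}) = \lm(I^n/J\cap I^n) + \lm(J\cap I^n/JI^{n-1})$. Note also that $J\cap I = J = JI^0$, so the $n=1$ term of the difference vanishes and
\[
\sum_{n\geq 1}\lm(I^n/JI^{n-1}) \;=\; \sum_{n\geq 1}\lm(I^n/J\cap I^n) \;+\; \sum_{n\geq 2}\lm(J\cap I^n/JI^{n-1}).
\]
By hypothesis, the last sum equals $1$.

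Next I would apply the Huckaba--Marley inequality from the main theorem of the section:
\[
\sum_{n\geq 1}\lm(I^n/J\cap I^n) \;\leq\; e_1(I) \;\leq\; \sum_{n\geq 1}\lm(I^n/JI^{n-1}).
\]
Since the upper bound exceeds the lower one by exactly $1$ and all quantities are integers, $e_1(I)$ equals one of the two bounds. I now rule out the lower one. If $e_1(I)=\sum_{n\geq 1}\lm(I^n/J\cap I^n)$, then part (2) of the Huckaba--Marley theorem forces $G(I)$ to be Cohen--Macaulay; by Valabrega--Valla this means $J\cap I^n = JI^{n-1}$ for every $n\geq 1$, whence $\sum_{n\geq 2}\lm(J\cap I^n/JI^{n-1}) = 0$, contradicting the hypothesis.

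Therefore the upper bound is attained, i.e. $e_1(I)=\sum_{n\geq 1}\lm(I^n/JI^{n-1})$, and part (3) of the Huckaba--Marley theorem yields $\depth G(I) \geq d-1$. Equality must hold, for otherwise $\depth G(I)=d$ would make $G(I)$ Cohen--Macaulay, which we have just excluded. Hence $\depth G(I) = d-1$. The only subtle point is verifying that $G(I)$ cannot be Cohen--Macaulay under the hypothesis, but the Valabrega--Valla characterization makes this immediate, so no genuine obstacle remains.
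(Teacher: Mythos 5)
Your proof is correct and follows essentially the same route as the paper: squeeze $e_1(I)$ between the two Huckaba--Marley bounds, note the gap is exactly $1$, and rule out the lower bound via the Cohen--Macaulay characterization. Your write-up is in fact slightly more careful than the paper's, since you explicitly justify both the telescoping identity and the final step excluding $\depth G(I)=d$.
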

\begin{proof}
We know that $$\sum_{n=1}^{\infty}\lm(I^n/J\cap I^n)\leq e_1(I) \leq \sum_{n=1}^{\infty}\lm(I^n/J I^{n-1}).$$ 
Since $\sum_{n=1}^{\infty}\lm(I^n/J I^{n-1})-\sum_{n=1}^{\infty}\lm(I^n/J\cap I^n)=1$ we conclude $e_1(I)=\sum_{n=1}^{\infty}\lm(I^n/J\cap I^n)$ or $e_1(I)=\sum_{n=1}^{\infty}\lm(I^n/J I^{n-1}).$
The former case does not occur since otherwise $G(I)$ will be Cohen-Macaulay and consequently $I^n\cap J=JI^{n-1}$ for all $n\geq 1.$  Hence    $\sum_{n=1}^{\infty}\lm(J\cap I^n/J I^{n-1})=0,$  which is a contradiction.
Therefore $\depth G(I)=d-1.$
\end{proof}

\end{document}